\documentclass[11pt]{amsart}


\usepackage{amsthm, amsmath, amscd, amssymb, latexsym, stmaryrd, color}

\usepackage[all]{xypic}
\usepackage{color}
\usepackage[dvipsnames]{xcolor}
\usepackage{fullpage}
\definecolor{hot}{RGB}{65,105,225}
\usepackage[pagebackref=true,colorlinks=true, linkcolor=BurntOrange,  citecolor=Bittersweet, urlcolor=hot]{hyperref}

\usepackage[%
    left=1.in,%
    right=1.in,%
    top=1.5in,%
    bottom=1.5in,%
    paperheight=11in,%
    paperwidth=8.5in%
]{geometry}
\usepackage[all,arc]{xy}
\usepackage{enumerate}

\usepackage{mathtools}
\usepackage{graphicx}
\usepackage{hyperref}
\usepackage{parskip}
\usepackage{fancyhdr}

\usepackage[T1]{fontenc}
\usepackage{newtx}
\usepackage{tikz}
\usepackage{tikz-cd}
\usepackage{dsfont}
\usetikzlibrary{matrix}

\newtheorem{thm}{Theorem}[section]
\newtheorem{cor}[thm]{Corollary}
\newtheorem{prop}[thm]{Proposition}
\newtheorem{lem}[thm]{Lemma}
\newtheorem{conj}[thm]{Conjecture}
\newtheorem{quest}[thm]{Question}
\newtheorem{rem}[thm]{Remark}
\newtheorem{defn}[thm]{Definition}

\newtheorem{exmp}[thm]{Example}

\makeatletter
\let\c@equation\c@thm
\makeatother
\numberwithin{equation}{section}

\makeatletter
\def\thm@space@setup{%
  \thm@preskip=\parskip \thm@postskip=0pt
}
\makeatother

\title{Topological Obstructions to Dynamical Convexity}
\date{}
\author{Shahnaz Shamim Shahul}
\address{Université de Toulouse\\CNRS\\Institut de Mathématiques de Toulouse \\ France\\ }
\email{shahnaz\_shamim.shahul@math.univ-toulouse.fr}

\begin{document}
\maketitle

\begin{abstract}
  We study the topological obstructions of dynamical convexity on contact manifolds focusing on fillability by cotangent bundles and subcritical surgeries. Using links to algebraic geometry, we motivate and define a stronger version of dynamical convexity, and investigate the topology of these manifolds. More precisely, we show that  strongly dynamically convex contact manifolds cannot arise as a unit cotangent bundle $(ST^*M,\lambda_{std})$ of a closed  manifold $M$ and in particular that simply connected dynamically convex contact manifolds cannot be filled by  cotangent bundles. We demonstrate that dynamical convexity can  be used to recover homotopy groups of topologically simple fillings with vanishing symplectic homology.
  We also show obstructions to dynamical convexity  that come from studying different kinds of subcritical surgeries.
\end{abstract}

\section{Introduction}
One of the main themes in symplectic topology and symplectic dynamics is the relation between the topology of a symplectic manifold and the dynamics on its boundary; particularly when this boundary is a contact manifold. In this paper, we study constraints on the topology of the boundary and the interior, coming from the dynamics.

A key notion for the dynamics on a contact boundary, introduced by Hofer, Wysocki and Zehnder to generalize the concept of affine convexity is \textbf{dynamical convexity}. In particular, they proved that the boundary of any affine convex domain in $\mathbb{R}^{2n}$ is dynamically convex. It was proven recently that the converse is not true \cite{chaidez20223d, dardennes2024symplectic, chaidez2025ruelle}.

The assumption of being dynamically convex is key in many results in dynamics, for instance, \cite{hofer1999characterization, gutt2016minimal, abreu2017multiplicity, ginzburg2021dynamical, hryniewicz2022global},and for existence of some invariants, for instance in \cite{hutchings2017cylindrical, zhou2021symplectic, chaidez2024contact}. We also highlight its relations to algebraic geometry, motivated by \cite{mclean2016reeb}. The question we study in this paper is
\begin{quest}
    Which manifolds admit a dynamically convex contact structure?
\end{quest}

A contact manifold $(M,\xi)$ is a smooth $2n-1$ dimensional manifold endowed with a maximally non-integrable hyperplane distribution. If we write $\xi$ as the kernel of a 1-form $\xi=\ker\alpha$, the condition is $\alpha\wedge{d\alpha}^{n-1}\neq0$. If $\alpha$ is globally defined, which we shall always assume, we call $\alpha$ a contact form.
Given  a contact form $\alpha$ on $(M,\xi)$, there exists a unique vector field on $M$ called Reeb vector field $R_\alpha$ defined by 
$$R_\alpha\in \ker(d\alpha) \quad \textrm{ and } \quad \alpha(R_\alpha)=1.$$

 Let $\phi^{\alpha}$ be the flow generated by the Reeb vector field. We denote the set of closed orbits of this flow, called {\bf Reeb orbits} by $\mathcal{P}(\alpha)$.
 Given a Reeb orbit $\gamma\in\mathcal{P}(\alpha)$, we can associate a number to $\gamma$ based on how much the flow of $R_\alpha$ winds around $\gamma$. This number is called the {\bf Conley Zehnder index} of $\gamma$, $\mu_{CZ}(\gamma)$. 
 More precisely, in a symplectic trivialization of $\gamma^\star TM$, the flow $\phi^{\alpha}$ gives a path of symplectic matrices to which we can associate a number; see \cite{conley1984morse, robbin1993maslov, gutt2012conley}.

 A priori, the Conley Zehnder index depends on the choice of trivialization. This is not the case if the Reeb orbit is contractible and $c_1(\xi)=0.$
We can also  have a well-defined Conley Zehnder index (independent of the trivialization) valued in $\mathbb{Q}$ denoted by $\mu^\mathbb{Q}_{CZ} $ if $c_1(\xi) $ is torsion. This latter index admits a lower semi-continuous extension $\mu^\mathbb{Q}_{LCZ}$ of $\mu^\mathbb{Q}_{CZ}$ \cite{mclean2016reeb}.
\begin{defn}
Given a $2n-1 $ dimensional contact manifold $(M,\xi$) with $c_1(\xi,\mathbb{Q})=0$. A contact form $\alpha$  is dynamically convex if $\mu^\mathbb{Q}_{LCZ}(\gamma)\geq n+1$ for all $\gamma$ contractible Reeb orbits. A contact manifold($M,\xi$) is said to be dynamically convex if there exists a contact form $\alpha$ with $\xi = \ker\alpha$ which is dynamically convex.
\end{defn}

 It was proved in \cite{hofer1999characterization}, that if a contact 3-manifold admits a dynamically convex contact form, then the contact structure is tight and the second fundamental group of the manifold vanishes, thereby constraining its topology. Recently, \cite{kwon2024dynamically} proved that if a contact manifold is dynamically convex, and simply connected, then it is contactomorphic to the standard sphere, provided the manifold admits a flexible Weinstein filling. This gives a characterisation of standard contact spheres.
 
 A very natural question is, \emph{which contact manifolds can be equipped with a dynamically convex contact form and what are the topological obstructions to do so?}
 
 We prove that a large class of examples cannot admit such contact forms. Most notably, simply connected unit cotangent bundles of  manifolds cannot admit dynamically convex contact forms. 
 \begin{thm}[Theorem\ref{change}]\label{contra}
        The unit cotangent bundle $ST^*M$ of a closed manifold $M$ cannot be dynamically convex if it is simply connected .
\end{thm}

Dynamical convexity is a condition defined only on contractible Reeb orbits.
In \cite{mclean2016reeb}, Mclean showed that the Reeb dynamics on a the link of a singularity of an affine variety is related to an algebro-geometric invariant of the singularity called the minimal discrepancy. Using Mclean's results, a conjecture in birational geometry concerning this invariant naturally leads to a stronger version of dynamical convexity which we call {\bf strong dynamical convexity }(Definition \ref{sdcdef}). Motivated by these results we want to also  study the topology of strongly dynamically convex contact manifolds.

We recall some definitions and results from \cite{mclean2016reeb}.
Given an irreducible affine variety $V$ of complex dimension $n$ in $\mathbb{C}^N$ with an isolated singularity at $0$ (including the case where $V$ is smooth),  the intersection of $V$ with a sphere $S^{2N-1}(\epsilon)$ centered at $0$ for small enough $\epsilon$, is a $2n-1$ differentiable  manifold $L=V\cap S^{2N-1}(\epsilon)$. $L$ naturally inherits a contact structure $\xi_L=TL\cap i(TL)$.

The singularity of $V$ at $0$ is called smooth if $V$ is smooth at $0$ and topologically smooth if the link $L$ is diffeomorphic to $S^{2n-1}$. Note that if a singularity is smooth then it is topologically smooth. Mumford \cite{mumford1961topology} proved that in complex dimension $2$, a singularity is smooth if and only if it is topologically smooth. However, in complex dimension $3$ and higher there are examples of non-smooth singularities which are  topologically smooth. Therefore, the smoothness of the singularity can  be determined by the diffeomorphism type of the link $L$ only in dimension 2.

However, if we consider $L$ with its inherited contact structure $\xi_L$,  Mclean proved that in complex dimension $3$, a singularity of a normal variety $V$ is smooth if and only if the link $(L,\xi_L)$ is contactomorphic to $(S^{5},\xi_{std})$.

If the variety $V$ is normal and the singularity is numerically $\mathbb{Q}-$Gorenstein, then we can define an invariant of this singularity called minimal discrepancy denoted by $\operatorname{min.disc}(V,0)$. Mclean relates this invariant to a contact invariant of the link called $\operatorname{hmi}$ (highest minimal index).

\begin{defn}
    For a contact manifold $(M,\xi)$ with $c_1(\xi,\mathbb{Q})=0$ and  $H^1(M,\mathbb{Q})=0$, the highest minimal index is defined as
    \[  
    \operatorname{hmi}:= sup_{\{\alpha\,|\,\xi=ker\hspace{0.05cm} \alpha \}} inf_{\{\gamma\in \mathcal{P}(\alpha)  \} }\Bigl(\mu^{\mathbb{Q}}_{LCZ}(\gamma)+n-3\Bigr)
    \]
\end{defn}
\begin{thm}(\textbf{Mclean}\cite{mclean2016reeb})\label{mclean} Given $H^1(L,\mathbb{Q})=0$, and $V$, an affine normal variety with an isolated numerically $\mathbb{Q}-$Gorenstein singularity, then
\begin{enumerate}
\item if $\operatorname{min.disc}(V,0)\geq 0$, $2\operatorname{min.disc}(V,0)=\operatorname{hmi}(L)$
\item if $\operatorname{min.disc}(V,0)<0$, $\operatorname{hmi}(L)<0$
\end{enumerate}
\end{thm}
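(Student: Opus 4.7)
The strategy is to translate the algebro-geometric invariant $\operatorname{min.disc}(V,0)$, defined via resolutions of singularities, into an explicit symplectic computation on the link $L$. Fix a log resolution $\pi: \tilde{V} \to V$ of the singularity at $0$ with simple normal crossing exceptional divisor $\bigcup_i E_i$, and write $K_{\tilde V} = \pi^* K_V + \sum_i a_i E_i$, so that by definition $\operatorname{min.disc}(V,0) = \min_i a_i$. A sufficiently small neighborhood of $\pi^{-1}(0)$ in $\tilde V$ inherits a Kähler structure making it a symplectic filling of $(L,\xi_L)$, and all the computation is carried out inside this model.

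The first step is to construct a contact form $\alpha_0$ on $L$ whose Reeb flow is Morse-Bott, with orbit families indexed by pairs $(i,k)$ where $k \in \mathbb{N}$ counts the multiple of the meridian circle around $E_i$. This is achieved by deforming the Kähler form to a toric-like model near each stratum of the normal crossings configuration, so that the Reeb flow rotates the fibers of the $S^1$-bundle over $E_i$ at a rate determined by the local $\mathbb{C}^*$-weights. For such a form, the rational index $\mu^{\mathbb{Q}}_{LCZ}(\gamma_{i,k})$ (really a Robbin--Salamon index of the orbit family) is a linear function of $k$ whose slope is determined by these weights and whose intercept encodes the discrepancy $a_i$ through the relation above.

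The second step is the index calculation, which shows that after optimizing over $(i,k)$,
\[
\inf_{\gamma \in \mathcal{P}(\alpha_0)} \bigl( \mu^{\mathbb{Q}}_{LCZ}(\gamma) + n - 3 \bigr) \;=\; 2 \min_i a_i \;=\; 2\operatorname{min.disc}(V,0).
\]
This yields $\operatorname{hmi}(L) \geq 2\operatorname{min.disc}(V,0)$ in case (1), and in case (2) produces an orbit with $\mu^{\mathbb{Q}}_{LCZ} + n - 3 < 0$ whenever some $a_i < 0$, forcing $\operatorname{hmi}(L) < 0$. For the matching upper bound in (1), I would invoke positive $S^1$-equivariant symplectic cohomology of the filling: nonvanishing in degrees matching the discrepancy-indexed orbits forces every contact form on $L$ to admit Reeb orbits of bounded index, preventing $\operatorname{hmi}(L)$ from exceeding $2\operatorname{min.disc}(V,0)$. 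The independence of the answer from the choice of resolution follows from standard birational invariance of minimal discrepancies combined with invariance of $\operatorname{hmi}$ under contactomorphism.

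The main obstacle is the index bookkeeping in the Morse-Bott setting: the factor of $2$ relating $\operatorname{min.disc}$ to $\operatorname{hmi}$ arises precisely from reconciling the symplectic trivialization of $\gamma^{*}\xi_L$ used to define $\mu^{\mathbb{Q}}_{CZ}$ with the canonical bundle data $K_{\tilde V} = \pi^{*}K_V + \sum_i a_i E_i$ that defines the discrepancies. Aligning the Robbin--Salamon corrections, the multiplicities $m_i$ of $E_i$ in $\pi^{*}\Omega$ for a meromorphic top form $\Omega$, and the dimensional shift $n-3$ consistently along every stratum of the normal crossings configuration is the technical heart of the argument. I would test the computation first on quotient singularities and cones over smooth Fano projective varieties, where both sides can be evaluated by hand, before attempting the general simple normal crossings case.
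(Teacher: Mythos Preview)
The paper does not prove this theorem; it is quoted from McLean's work \cite{mclean2016reeb} and used as a black box to motivate the definition of strong dynamical convexity. There is thus no proof in the paper to compare your proposal against.

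That said, your outline is a faithful high-level summary of McLean's own strategy: build a contact form on $L$ from a log resolution whose Reeb dynamics are governed by the exceptional divisors, compute the (Robbin--Salamon) indices of the resulting Morse--Bott families in terms of the discrepancies $a_i$, and use a Floer-theoretic nonvanishing result to pin down the upper bound on $\operatorname{hmi}$. The obstacle you flag---reconciling the symplectic trivialization of $\gamma^*\xi_L$ with the canonical-bundle data across all strata of the normal crossings configuration---is exactly the technical heart of McLean's argument, and occupies a large portion of his paper. Your sketch is correct in spirit, but be aware that turning it into a proof requires careful control of the contact form near deeper strata (intersections of several $E_i$) and a precise statement of which Floer invariant detects the relevant orbits; these are not minor details.
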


A conjecture that relates minimal discrepancy to the smoothness of the singularity was formulated by Shokurov and is open for varieties of complex dimension greater than 3.

\begin{conj}[Shokurov]\label{shok} For a normal and affine variety $V$ of complex dimension $n$ with an isolated singularity at $0$ which is  numerically $\mathbb{Q}$-Gorenstein, 
$$\operatorname{min.disc}(V,0)\leq n-1$$ with the singularity being smooth if and only if $\operatorname{min.disc}(V,0)=n-1$.
\end{conj}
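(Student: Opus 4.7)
The statement has two components: the inequality $\operatorname{min.disc}(V,0)\leq n-1$ and the characterization of equality with smoothness. The direction ``smooth implies equality'' is essentially a computation: blowing up the origin in $\mathbb{C}^n$ produces an exceptional divisor with discrepancy exactly $n-1$, and standard log resolution arguments show that any other divisorial valuation over a smooth point has discrepancy at least $n-1$. This half of the equivalence therefore reduces to a direct calculation.

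For the upper bound, my plan is to produce, for every numerically $\mathbb{Q}$-Gorenstein isolated singularity, at least one divisorial valuation with discrepancy at most $n-1$. A natural candidate is the exceptional divisor of a suitably weighted blow-up of $0$. Controlling this universally would proceed by induction on $n$ together with an analysis of the dual complex of a log resolution, and would likely rely on the ACC theorem for log canonical thresholds of Hacon--McKernan--Xu to prevent minimal discrepancies from accumulating just above $n-1$.

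The truly difficult direction is that $\operatorname{min.disc}(V,0)=n-1$ forces smoothness. Here I would invoke Theorem \ref{mclean}: the hypothesis translates to $\operatorname{hmi}(L)=2(n-1)$, the maximum achievable value, so the link is strongly dynamically convex in an extremal sense. The strategy is then to show that any contact manifold with $\operatorname{hmi}$ this large must be contactomorphic to $(S^{2n-1},\xi_{std})$; in complex dimension $3$ this is precisely Mclean's contact-geometric characterization of smooth links, which provides a prototype. Concretely one would try to exploit the extremal Reeb dynamics to exhibit a Liouville filling, compute its symplectic homology using the near-minimal Conley--Zehnder indices forced by the assumption, and conclude via a rigidity result for the standard ball that the filling must be $(B^{2n},\omega_{std})$, thereby forcing $L$ to be the standard sphere.

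The main obstacle is precisely this last step: a purely dynamical characterization of $(S^{2n-1},\xi_{std})$ in dimensions $2n-1\geq 7$. Existing results in that direction, such as the theorem of Kwon cited above, require strong auxiliary fillability hypotheses, and either removing those hypotheses or showing a priori that maximal $\operatorname{hmi}$ produces a suitable filling is where substantively new input would be needed, presumably combining rigidity of extremal Reeb flows with the algebro-geometric structure of $\mathbb{Q}$-Gorenstein singularities. It is exactly the hope that strong dynamical convexity is a useful foothold for such a characterization that motivates the topological obstructions studied in the rest of this paper.
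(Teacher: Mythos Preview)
The statement you are addressing is not a theorem in the paper but Shokurov's \emph{conjecture}, explicitly labeled as such and described as ``open for varieties of complex dimension greater than 3.'' The paper offers no proof; it invokes the conjecture only as motivation for the definition of strong dynamical convexity and for Questions~\ref{m} and~\ref{1.7}. There is therefore nothing in the paper to compare your attempt against.

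Your write-up is accordingly a strategy sketch rather than a proof, and you concede as much in the final paragraph. Two specific cautions are worth flagging. First, for the upper bound $\operatorname{min.disc}(V,0)\leq n-1$ you gesture at weighted blow-ups and the ACC for log canonical thresholds, but no actual argument is given; this inequality is itself part of the open conjecture, and invoking ACC does not by itself produce a divisor of discrepancy at most $n-1$. Second, your plan for the hard direction runs a real risk of circularity: you propose to deduce smoothness from a contact characterization of $(S^{2n-1},\xi_{std})$, but as the paper notes, Mclean's dimension-$3$ result of this type takes Shokurov's conjecture (already proven in that dimension by birational methods) as \emph{input}, not output. Even if you succeeded in showing that maximal $\operatorname{hmi}$ forces the link to be the standard contact sphere, you would still need an independent argument that a normal $\mathbb{Q}$-Gorenstein singularity with standard-sphere link is smooth; this is not known in general and is not supplied by your outline.
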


For links of singularities as in Shokurov conjecture, assuming $\operatorname{min.disc}(V,0)>0$, we have, using \ref{mclean}
that 
$$sup_{\{\alpha:\xi=ker\hspace{0.05cm} \alpha \}} inf_{\{\gamma\in \mathcal{P}(\alpha)  \} }(\mu^{\mathbb{Q}}_{LCZ}(\gamma))\leq n+1.$$ Therefore, if conjecture \ref{shok} is true, there cannot exist a contact form for links such that 
$$inf_{\{\gamma\in \mathcal{P}(\alpha)\}}(\mu^{\mathbb{Q}}_{LCZ}(\gamma))>n+1.$$
Furthermore, $\operatorname{min.disc}=n-1$ is equivalent to the link $(L,\xi_L)$ admitting a contact form $\alpha$ such that $inf_{\{\gamma\in \mathcal{P}(\alpha)\}}(\mu^{\mathbb{Q}}_{LCZ}(\gamma))=n+1$.  This naturally lead us to the definition of strong dynamical convexity.

\begin{defn}\label{sdcdef}  Given a $2n-1 $ dimensional contact manifold $(M,\xi$) with $c_1(\xi)$ torsion and $H^1(M,\mathbb{Q})=0$. A contact form $\alpha$ is strongly dynamically convex if $$\mu^\mathbb{Q}_{LCZ}(\gamma)\geq n+1$$ for all Reeb orbits $\gamma$. ($M,\xi$) is said to be strongly dynamically convex if there exists a strongly dynamically convex contact form $\alpha$ with $\ker\alpha=\xi$.    \end{defn}

\begin{rem}
    This definition of strong dynamical convexity is different from the one introduced in \cite{ginzburg2021dynamical}.
\end{rem}
\begin{rem}
     In contrast with dynamical convexity, strong dynamical convexity is defined taking into account all Reeb orbits, not just contractible ones. The topological conditions placed on the manifold are precisely those that allow us to do so. Note that when $\pi_1(M)=0$, the notions of dynamical convexity and strong dynamical convexity coincide.
\end{rem}

Shokurov's conjecture, if true,  implies that $L$ is diffeomorphic to a sphere if $L$ is strongly dynamically convex and arises as a link of a normal numerically $\mathbb{Q}-$Gorenstein singularity. Furthermore, using $$sup_{\{\alpha:\xi=ker\hspace{0.05cm} \alpha \}} inf_{\{\gamma\in \mathcal{P}(\alpha)  \} }(\mu^{\mathbb{Q}}_{LCZ})(\gamma)\leq n+1 $$ Shokurov's conjecture also implies that such links cannot support a contact form with $$inf_{\{\gamma\in \mathcal{P}(\alpha)\}}\mu^\mathbb{Q}_{LCZ}(\gamma)\geq n+2.$$
It is natural to ask whether these questions hold more generally?

\begin{quest}\label{m}
    If a contact manifold $(M,\xi)$ is strongly dynamically convex, then is it diffeomorphic to $S^{2n-1}$? 
\end{quest}
\begin{quest}\label{1.7}
     Does there exist a contact manifold $(M,\xi)$ supporting  a contact form $\alpha$, such that $inf_{\{\gamma\in \mathcal{P}(\alpha)\}}\mu^\mathbb{Q}_{LCZ}(\gamma)\geq n+2$?   
    \end{quest}

\begin{rem}
    An immediate observation in the direction of \ref{1.7} is that if  a contact manifold admits a  contact form $\alpha$, such that $inf_{\{\gamma\in \mathcal{P}(\alpha)\}}\mu^\mathbb{Q}_{LCZ}(\gamma)\geq n+2$, then it cannot admit fillings $W$ with vanishing symplectic homology, in particular flexible fillings. This follows from the long exact sequence in symplectic homology \ref{les} and the fact that $SH^+_{n+1}(W)=0$ which would force $H^0(W)=0$, a contradiction.  
\end{rem}
\begin{rem}
    Conjecture \ref{shok} is proven for toric varieties and up to complex  dim $3$. Mclean uses \ref{mclean} and the proof of Shokurov's conjecture in complex dim $3$ to show that a singularity is smooth if and only if it is contactomorphic to the standard sphere in complex dimension $3$. Therefore strongly dynamically convex contact manifolds that arise as links are not just diffeomorphic but contactomorphic to the standard sphere up to real dimension $5$. We can thus strengthen our Question \ref{m} to ask whether strongly dynamically convex manifolds are contactomorphic to standard spheres? Note that this is related to the problem of charaterizing $(S^{2n-1},\xi_{std})$. Since strong dynamical convexity and dynamical convexity coincide on a sphere, this would imply that the assumption of flexible fillability can be removed from the assumptions of \cite[theorem A]{kwon2024dynamically}.
\end{rem}

Question \ref{1.7} would imply that strong dynamical convexity is the ``limit case'' for highest minimal index. A proof of \ref{m} for links of singularities would thus prove a strong  implication of conjecture \ref{shok}. 

Our main result is about strongly dynamically convex unit cotangent bundles.
\begin{thm}[Corollary\ref{sdc}] \label{thm 1.11}
    The unit cotangent bundle $ST^*M$ of a closed manifold $M$ cannot be strongly dynamically convex.
    \end{thm}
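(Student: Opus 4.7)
The plan is to obtain a contradiction by running the Viterbo long exact sequence for the canonical Liouville filling $W = DT^*N$ against Viterbo's identification $SH_*(DT^*N) \cong H_*(\Lambda N)$. Assume $(ST^*N, \alpha)$ is strongly dynamically convex. With the grading convention from the paper's definition of $\mathrm{hmi}$ (a Reeb orbit $\gamma$ contributes to $SH^+_*(W;\mathbb{Q})$ in degree $\mu^{\mathbb{Q}}_{CZ}(\gamma)+n-3$), the bound $\mu^{\mathbb{Q}}_{LCZ}(\gamma) \geq n+1$ on every Reeb orbit immediately forces $SH^+_k(W;\mathbb{Q})=0$ for all $k \leq 2n-3$.

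Next I feed this into the Viterbo long exact sequence
\[
\cdots \to H^{n-k}(W;\mathbb{Q}) \to SH_k(W;\mathbb{Q}) \to SH^+_k(W;\mathbb{Q}) \to H^{n-k+1}(W;\mathbb{Q}) \to SH_{k-1}(W;\mathbb{Q}) \to \cdots.
\]
Since $W$ retracts onto the zero section, $H^*(W;\mathbb{Q}) \cong H^*(N;\mathbb{Q})$, so the vanishing of $SH^+_k$ for $k \leq 2n-3$ makes $SH_k(W;\mathbb{Q})$ a quotient of $H^{n-k}(N;\mathbb{Q})$ throughout this range. Viterbo's isomorphism $SH_*(DT^*N;\mathbb{Q}) \cong H_*(\Lambda N;\mathcal{L})$ (with $\mathcal{L}$ the local coefficient system arising from $w_2(TN)$) then says that $H_k(\Lambda N;\mathcal{L})$ is a quotient of $H^{n-k}(N;\mathbb{Q})$ for every $k \leq 2n-3$.

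The contradiction comes from the topology of the free loop space. For simply connected $N$, let $k_0 \geq 2$ be the smallest integer with $\pi_{k_0}(N) \otimes \mathbb{Q} \neq 0$; by rational Hurewicz, $k_0$ is also the smallest positive degree of $H_*(N;\mathbb{Q})$, hence $H_{k_0-1}(N;\mathbb{Q})=0$ and (by Poincar\'e duality for orientable $N$) $H^{n-k_0+1}(N;\mathbb{Q})=0$. Taking $k=k_0-1 \in [1,n-1] \subseteq [0,2n-3]$ therefore forces $H_{k_0-1}(\Lambda N;\mathcal{L})=0$. On the other hand, the evaluation fibration $\Omega N \to \Lambda N \to N$ admits a section by constant loops, so $H^*(\Lambda N;\mathbb{Q}) \cong H^*(N;\mathbb{Q}) \otimes H^*(\Omega N;\mathbb{Q})$ as graded vector spaces, and $H_{k_0-1}(\Omega N;\mathbb{Q}) \neq 0$ (rational Hurewicz again), yielding a non-zero class in $H_{k_0-1}(\Lambda N;\mathbb{Q})$---contradiction. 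For non-simply-connected $N$ one instead decomposes $SH_*(W;\mathbb{Q}) = \bigoplus_\sigma SH_*(W;\sigma)$ over free-homotopy classes $\sigma$ of loops in $N$: for each $\sigma \neq 0$, $SH_*(W;\sigma) \cong SH^+_*(W;\sigma)$ since no constant orbit lies in $\sigma$, so strong dynamical convexity forces $H_*(\Lambda_\sigma N;\mathcal{L})=0$ in degrees $\leq 2n-3$, contradicting the non-emptiness of each such component.

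The hard part is uniformly tracking the local coefficient system $\mathcal{L}$ in Viterbo's theorem when $N$ is non-Spin or non-orientable, and ruling out corner cases where $\mathcal{L}$ could a priori annihilate the loop-space classes found above. The low-dimensional edge case $n=2$ requires $N=S^2$ by the Gysin-sequence analysis of $H^1(ST^*N;\mathbb{Q})=0$, and is handled by the direct computation $H_1(\Lambda S^2;\mathbb{Q})=\mathbb{Q}$ using the same evaluation-fibration splitting.
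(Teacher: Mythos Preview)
Your overall architecture---Viterbo isomorphism plus the tautological long exact sequence, then a contradiction coming from the loop-loop fibration---is the same as the paper's.  However, several steps in your execution are incorrect or incomplete.

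First, the grading claim is wrong.  The shift $+\,n-3$ in the definition of $\operatorname{hmi}$ is a normalization chosen so that $\operatorname{hmi}$ matches $2\cdot\operatorname{min.disc}$; it is \emph{not} the grading shift between $\mu_{CZ}$ and $SH^+$.  In the conventions used here a Reeb orbit contributes in degree $\mu_{CZ}(\gamma)$, so strong dynamical convexity gives $SH^+_k(W)=0$ only for $k\le n$, not $k\le 2n-3$.  This does not actually break your argument (the degrees you use are $\le n-1$), but it does mean your stated vanishing range is unjustified.

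Second, and more seriously, the assertion that a section of $\Omega N\to\Lambda N\to N$ forces $H^*(\Lambda N;\mathbb{Q})\cong H^*(N;\mathbb{Q})\otimes H^*(\Omega N;\mathbb{Q})$ is false.  A section only makes the differentials emanating from the bottom row vanish; higher differentials can (and do) kill classes in the fibre column.  Already for $N=S^2$ one has $\dim H^2(\Lambda S^2;\mathbb{Q})=1$ while $\dim\big(H^*(S^2)\otimes H^*(\Omega S^2)\big)_2=2$.  The conclusion you need, namely $H_{k_0-1}(\Lambda N;\mathbb{Q})\neq0$, is true, but it requires the kind of careful inductive spectral-sequence analysis the paper carries out in Proposition~\ref{prop:HkM}, not a one-line splitting.

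Third, you correctly flag the local system $\mathcal{L}$ as ``the hard part'', but you do not resolve it: your free-loop-space computation is with trivial coefficients, while Viterbo's isomorphism over $\mathbb{Q}$ lands in $H_*(\Lambda N;\mathcal{L})$, and in the non-simply-connected case even $H_0(\Lambda_\sigma N;\mathcal{L})$ can vanish.  The paper circumvents this entirely by working over $\mathbb{Z}_2$, where Viterbo's isomorphism holds with constant coefficients.  Your reduction of the non-simply-connected case is otherwise exactly the paper's Theorem~\ref{pi_1}.
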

    \begin{rem}
        A consequence of theorem \ref{thm 1.11} is that simply connected unit cotangent bundles cannot be dynamically convex. Note that unit cotangent bundles of simply connected manifolds are simply connected if $n\geq 3$ by using the long exact sequence of homotopy groups of a fibration. For $n=2$, the unit cotangent bundle of $S^2$ which is $\mathbb{RP}^3$ is dynamically convex, however not strongly dynamically convex. \end{rem}
    Another application of  Theorem \ref{thm 1.11} is to check conjecture \ref{shok} for varieties with links of singularities that are contactomorphic to unit-cotangent bundles. A toy example is given by the affine variety $ V(z_1^2+z_2^2+\cdots+z_n^2)$ with singularity at origin whose link is contactomorphic to $(ST^*S^{n-1},\xi_{std}).$ \cite{kwon2016brieskorn}. Other examples include unit cotangent bundles of manifolds considered in \cite[Theorem $1.1$]{mclean2018affine}.

In a positive step towards Question \ref{m}, we have
\begin{thm}
    If a simply connected contact  3 manifold $M$ is dynamically convex, then $M$ is contactomorphic to $(S^3,\xi_{std})$  
\end{thm}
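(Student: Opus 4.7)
The plan is to identify the underlying smooth manifold and the contact structure separately, then combine the two.

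For the smooth type: since $M$ is a closed simply connected $3$-manifold, Perelman's resolution of the Poincaré conjecture yields a diffeomorphism $M\cong S^3$. In particular $H^2(M;\mathbb{Z})=0$, so the hypothesis $c_1(\xi)=0$ implicit in the definition of dynamical convexity is automatic here.

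For the contact structure: the theorem of Hofer-Wysocki-Zehnder \cite{hofer1999characterization} recalled in the introduction asserts that any dynamically convex closed contact $3$-manifold is tight. Since $M\cong S^3$, Eliashberg's classification of tight contact structures on $S^3$ forces $\xi$ to be contactomorphic to $\xi_{std}$, the unique tight contact structure on $S^3$ up to contactomorphism. Combining the two steps yields $(M,\xi)\cong (S^3,\xi_{std})$.

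The argument is thus essentially a packaging of three deep external results: Perelman's Poincaré theorem, Eliashberg's classification of tight contact structures on $S^3$, and the HWZ tightness theorem. Only the last is specific to dynamical convexity; its proof relies on pseudoholomorphic-curve techniques in the symplectization, ruling out an overtwisted disk by producing a Reeb orbit of small Conley-Zehnder index on its boundary, which contradicts the dynamical-convexity lower bound $\mu^{\mathbb{Q}}_{LCZ}\geq n+1=3$. I expect no further technical obstacle once these inputs are invoked; the only mildly delicate point is to note that an alternative one-step proof via the HWZ characterization theorem would require exhibiting an \emph{unknotted} contractible Reeb orbit, a subtlety that the two-step Poincaré + tight + Eliashberg packaging sidesteps entirely.
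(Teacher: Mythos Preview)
Your proof is correct and matches the paper's argument essentially verbatim: the paper likewise invokes Hofer--Wysocki--Zehnder to get tightness, the Poincar\'e conjecture to identify $M$ with $S^3$, and Eliashberg's uniqueness of the tight contact structure on $S^3$ to conclude. The only difference is the order in which the three inputs are cited, and the paper omits your side remarks on the HWZ mechanism and the alternative one-step route.
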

\begin{proof}
    If M is dynamically convex, then by \cite[Theorem 1.5]{hofer1999characterization} $\xi$ is tight. By Poincar\'e's theorem $M$ is  diffeomorphic to a sphere $S^3$. The uniqueness of tight contact structure on $S^3$ \cite{eliashberg1992contact} implies $M$ is contactomorphic to the standard tight contact sphere $(S^3,\xi_{std}).$ 
\end{proof}
\begin{prop}\label{rational}
    If a contact $3$-manifold $M$ is strongly dynamically convex, then $M$ is a rational homology sphere
\end{prop}
\begin{proof}
    For a manifold to be strongly dynamically convex, we impose the condition $H^1(M,\mathbb{Q})=0$ in the definition for having a well defined Conley Zehnder index for non-contractible orbits. The universal coefficients theorem  implies  $H_1(M,\mathbb{Q})=0$ since $H^1(M,\mathbb{Q})=0$ and Poincar\'e duality  implies $H_2(M,\mathbb{Q})=0$. Therefore, $M$ is a rational homology sphere. 
\end{proof}

Note that there exists examples of  dynamically convex contact manifolds  which are not spheres but are quotients of spheres. We provide some of these examples in the next section.

\cite{kwon2024dynamically} proved that dynamically convex contact manifolds with fillings having vanishing symplectic homology are homology spheres and when $\pi_1=0$, homeomorphic to spheres. We prove that we can obtain homotopical information of such fillings without the condition on  $\pi_1$  by working with symplectic homology of covering spaces. More precisely, we prove the following statement.

\begin{thm}[Theorem \ref{hom}]\label{homotop}
      Let $M$ be a dynamically convex contact manifold such that there exists a topologically simple filling $W^\prime$ of $M$ with $SH(W^\prime)=0$. Then for any topologically simple filling $W^\prime$ of $M$, $\pi_k(W)\cong \pi_k(M)$ for $1<k<2n-1$ if $\pi_1(\partial W)\rightarrow \pi_1(W)$ is surjective.
\end{thm}

In another direction, we looked at building dynamically convex manifold using flexible/subcritical surgeries (see \cite{weinstein1991contact, cieliebak2002handle, fauck2016handle, lazarev2020contact}). This did not present new examples but obstructed large classes of manifolds from being dynamically convex, and hence strongly dynamically convex . 
\begin{rem}
    Throughout the paper, when we refer to surgery, we use the framework of contact surgery as in \cite{weinstein1991contact, fauck2016handle, geiges2008introduction} with the same framings.
\end{rem}

The following is a list of the obstructions we obtain.

 \begin{thm}[Theorem \ref{surg}]\label{subcritical}
      Let $M$ be a dynamically convex Weinstein fillable contact manifold of dimension greater than $5$. Let $M^\prime$ be the manifold that is obtained from $M$ by subcritical or flexible surgery, then $M^\prime$ cannot be dynamically convex. \end{thm}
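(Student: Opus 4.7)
I would argue by contradiction: assume $M'$ is also dynamically convex, and derive a contradiction from the Viterbo long exact sequence combined with invariance of symplectic homology under the surgery. Let $W$ be a Weinstein filling of $M$ and let $W' = W \cup H^k$ be the resulting Weinstein filling of $M'$, where $H^k$ is a subcritical $k$-handle ($1 \leq k \leq n-1$) or a flexible critical $n$-handle with loose attaching Legendrian. The hypothesis $\dim M > 5$, together with $\dim M$ odd, gives $n \geq 4$.

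The first step translates dynamical convexity into a cohomological statement. For a Weinstein filling $X$ of dimension $2n$, the Viterbo long exact sequence
\begin{equation*}
\cdots \to H^{n-*}(X;\mathbb{Q}) \to SH_*(X;\mathbb{Q}) \to SH^+_*(X;\mathbb{Q}) \to H^{n-*+1}(X;\mathbb{Q}) \to \cdots,
\end{equation*}
together with the degree formula $|\gamma|_{SH} = \mu_{CZ}(\gamma) + n - 3$, shows that dynamical convexity of $\partial X$ forces the contractible sector $SH^{+,0}_*(X;\mathbb{Q})$ to vanish for $* \leq 2n - 3$. The long exact sequence then yields isomorphisms $H^{n-*}(X;\mathbb{Q}) \cong SH^0_*(X;\mathbb{Q})$ on the contractible sector for $0 \leq * \leq 2n - 4$. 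Applied to both $W$ and $W'$ (the latter under the contradiction hypothesis), this identifies $H^j$ of the filling with the contractible part of $SH_{n-j}$ for every $j \in \{0, 1, \ldots, n\}$, since $n \geq 4$.

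The second step invokes the invariance of symplectic homology under the surgery: Cieliebak's theorem \cite{cieliebak2002handle} gives $SH_*(W';\mathbb{Q}) \cong SH_*(W;\mathbb{Q})$ in the subcritical case, and the analogous surgery result of \cite{lazarev2020contact} handles the flexible case. Combined with the previous step, this forces $H^j(W;\mathbb{Q}) \cong H^j(W';\mathbb{Q})$ for every $j \in \{0, 1, \ldots, n\}$. However, the long exact sequence of the pair $(W', W)$, together with the fact that $H^*(W', W;\mathbb{Q}) = \mathbb{Q}$ is concentrated in degree $k$, implies that $H^j(W')$ must differ from $H^j(W)$ in either degree $k$ or $k-1$, both of which lie in $\{0, 1, \ldots, n\}$. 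This is the desired contradiction.

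The main obstacle is the flexible case, where invariance of $SH$ is more delicate than in the subcritical setting; one has to exploit the looseness of the attaching Legendrian via a Bourgeois--Ekholm--Eliashberg-type surgery formula (so that the loose attaching sphere contributes trivially to $SH$) or via the direct analysis of flexible handles in \cite{lazarev2020contact}. A secondary subtlety arises when $k = 1$: the handle alters $\pi_1$ and thus the free homotopy decomposition of $SH$, so identification of the contractible sectors under Cieliebak's isomorphism requires extra justification; this case may alternatively be handled by exhibiting explicitly a contractible Reeb orbit with low Conley--Zehnder index supported near the new $1$-handle.
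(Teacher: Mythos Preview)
Your strategy coincides with the paper's: assume $M'$ is dynamically convex, use the Viterbo long exact sequence together with vanishing of $SH^+$ in low degrees on both fillings, invoke invariance of $SH$ under subcritical/flexible handle attachment, and derive a rank contradiction in singular cohomology of the filling. Two points deserve correction.

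First, your grading formula $|\gamma|_{SH}=\mu_{CZ}(\gamma)+n-3$ is the \emph{contact homology} convention, not the symplectic homology one used here; with the paper's convention (following Lazarev) one has $|\gamma|_{SH}=\mu_{CZ}(\gamma)$, so dynamical convexity only gives $SH^+_*=0$ for $*\le n$, and hence $H^{j}\cong SH_{n-j}$ only for $j\ge 1$. Your claimed range $j\in\{0,\dots,n\}$ is therefore off by one at the bottom. This is exactly why the paper treats the $k=1$ case separately: one cannot reach $H^0$ via the long exact sequence, but connectedness forces $H^0(W')\cong H^0(W)$ directly, whence the rank of $H^1$ must jump, and $j=1$ is already in range. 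Your diagnosis that the $k=1$ issue is about $\pi_1$ and the free-loop decomposition is not the real obstruction.

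Second, the hypothesis $\dim M>5$ is \emph{not} used to control the range in the long exact sequence as you suggest; it is used (via Proposition~\ref{chern}, which needs $n\ge 3$) to propagate $c_1(\xi;\mathbb{Q})=0$ from the boundary to $c_1(TW;\mathbb{Q})=0$ on the Weinstein filling, so that a rational grading on $SH$ exists in the first place. You should invoke this explicitly. Finally, rather than comparing abstract ranks, the paper runs the five-lemma on the commutative ladder of long exact sequences induced by the Viterbo transfer map (naturality due to Fauck); this is slightly cleaner since it directly gives that the restriction maps $H^{k-1}(W')\to H^{k-1}(W)$ and $H^{k}(W')\to H^{k}(W)$ are isomorphisms, without having to pass through $SH_{n-j}$ as an intermediary.
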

      \begin{thm}[Theorem \ref{con 0}]\label{contact 0}
    Let $M^\prime $ be the manifold obtained by contact $0-$surgery on a Weinstein -fillable contact manifold $M$ with vanishing rational first chern class. Then $M^\prime$ cannot be dynamically convex.
\end{thm}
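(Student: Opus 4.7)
The plan is to use symplectic homology of a Weinstein filling of $M'$, together with the Viterbo long exact sequence \ref{les}, to produce a Reeb orbit of $M'$ that is contractible in $M'$ and has Conley--Zehnder index strictly less than $n+1$, thereby contradicting dynamical convexity. Let $W$ be a Weinstein filling of $M$; the contact $0$-surgery lifts to the attachment of a Liouville $1$-handle to $W$, producing a Weinstein filling $W'$ of $M'$. Because the $1$-handle is subcritical, the first Chern class is preserved rationally, so $c_1(W') = 0 \in H^2(W';\mathbb{Q})$ and all the relevant invariants are well-defined.

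Two ingredients combine to force the contradiction. First, Cieliebak's invariance theorem for subcritical handle attachment yields $SH_*(W') \cong SH_*(W)$. Second, the topology of the filling genuinely changes in a controlled way: when the $0$-surgery is performed on a single connected component of $M$, $H^1(W')$ gains a new generator dual to the co-core of the handle; when it joins two distinct components, $H^0(W')$ loses a class. Substituting both pieces of information into
\[
\cdots \to H_{*+n}(W', \partial W') \to SH_*(W') \to SH^+_*(W') \to H_{*+n-1}(W', \partial W') \to \cdots,
\]
the topological change cannot be absorbed by $SH_*(W')$, which is unchanged, and must therefore appear as a new generator of $SH^+_*(W')$ in a specific low degree. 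Translating this degree back to a Conley--Zehnder index via the standard grading for positive symplectic homology gives a value strictly smaller than $n+1$.

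The main obstacle, and the crux of the argument, is to show that this new generator of $SH^+_*(W')$ is represented by a Reeb orbit contractible in $M'$ (rather than merely in $W'$, or only in a non-trivial conjugacy class of $\pi_1(M')$). When the surgery joins two distinct components of $M$, this is essentially immediate: the surgery region $D^1\times S^{2n-2}$ is simply connected, so any Reeb orbit supported near it is automatically contractible in $M'$. The harder case is when the surgery is performed on a single connected component, where $\pi_1(M')$ acquires a new $\mathbb{Z}$ summand from the handle and the short orbit traversing the handle is non-contractible. Here I would work with the decomposition of $SH^+_*(W')$ over conjugacy classes of $\pi_1(W')$, locating the handle's cohomological contribution in the trivial component, and then analyze the induced map $\pi_1(M')\to\pi_1(W')$ carefully (possibly passing to iterates) to extract a representative orbit that is contractible in $M'$ itself. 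Once this is in place, the Conley--Zehnder bound established above yields the desired contradiction with dynamical convexity.
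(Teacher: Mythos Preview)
Your setup—attach a Weinstein $1$-handle to a filling $W$ of $M$, invoke subcritical invariance $SH_*(W')\cong SH_*(W)$, and feed the jump in $H^1(W')$ through the long exact sequence \ref{les}—is exactly the paper's strategy. Where you diverge is in the endgame. You attempt to locate an explicit Reeb orbit of low index that is contractible in $M'$, and you flag this as the ``main obstacle''; the paper never does this. It argues by contradiction: assume $M'$ is dynamically convex, so $SH^+_k(W')=0$ for $k\le n$, and apply the $4$-lemma to the commutative ladder of long exact sequences for $W'$ and $W$ linked by the Viterbo transfer map. First, surjectivity of $H^1(W')\to H^1(W)$ (from the cohomology sequence of the pair) forces $SH^+_n(W')\to SH^+_n(W)$ to be surjective, hence $SH^+_n(W)=0$ as well; then, with the $SH$ and $SH^+$ vertical arrows all isomorphisms, the $4$-lemma makes $H^1(W')\to H^1(W)$ injective, contradicting $\operatorname{rk}H^1(W')=\operatorname{rk}H^1(W)+1$. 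The paper also takes $M$ connected, so only your single-component case is relevant; the two-component digression is unnecessary.

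Your ``main obstacle'' therefore dissolves in the paper's formulation: no individual orbit is ever produced, and the only place contractibility enters is the bare assertion that dynamical convexity of $M'$ forces $SH^+_k(W')=0$ for $k\le n$ (the paper works in the $\pi_1(W')$-trivial summand of $SH$ throughout and treats this implication as immediate). If you believe there is a genuine issue about orbits contractible in $W'$ but not in $M'$, that issue lives entirely inside this one assertion and applies equally to the paper's argument—it is not a feature of your orbit-chasing route. Regardless, decomposing over conjugacy classes of $\pi_1(M')$ and passing to iterates is substantially harder than the algebraic $4$-lemma step. I would drop the attempt to exhibit a concrete orbit and repackage your rank comparison as a $4$-lemma argument on the commutative ladder, exactly as the paper does.
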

\begin{thm}[Corollary \ref{SH_n}]\label{SH}
    Let $ \Lambda$ be the class of manifolds that admit fillings with non-vanishing symplectic homology, then for $Y_1,\ldots Y_k\in\Lambda$, $k>1$, the connected sum $\#_kY_i$ and the manifolds obtained by subcritical/flexible surgeries on this class of connected sums cannot be dynamically convex. 
\end{thm}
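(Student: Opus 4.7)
The plan is to exploit Cieliebak's boundary connected sum formula for symplectic homology together with the unit structure on $SH$, producing $k$ linearly independent classes in the contractible part of $SH_n$ for the boundary connected sum filling, which will conflict with the Viterbo--Cieliebak--Oancea long exact sequence under dynamical convexity. Write $SH_\ast^{\mathrm{c}}(W)$ for the subspace generated by contractible Reeb orbits.

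For each $Y_i\in\Lambda$ fix a Weinstein filling $W_i$ with $SH_\ast(W_i;\mathbb{Q})\neq 0$, and let $W := W_1\natural\cdots\natural W_k$ be the boundary connected sum, a Weinstein filling of $M=\#_k Y_i$ obtained by $k-1$ index-$1$ subcritical handle attachments. Cieliebak's invariance of symplectic homology under subcritical surgery then gives $SH_\ast(W)\cong\bigoplus_i SH_\ast(W_i)$. By van Kampen $\pi_1(W)\cong\pi_1(W_1)\ast\cdots\ast\pi_1(W_k)$, and since each $\pi_1(W_i)$ injects into the free product, this isomorphism respects the splitting of $SH$ by free homotopy classes and restricts to $SH_\ast^{\mathrm{c}}(W)\cong\bigoplus_i SH_\ast^{\mathrm{c}}(W_i)$ on the contractible part. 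Each $SH_\ast(W_i)$ is a unital ring whose unit $1_i\in SH_n(W_i)$ arises as the image of the fundamental class $[W_i,\partial W_i]\in H_{2n}(W_i,\partial W_i)$ in the LES of the pair; because $SH_\ast(W_i)\neq 0$ the unit is nonzero, and being the image of an absolute class $1_i$ lies in $SH_n^{\mathrm{c}}(W_i)$. The $k$ units therefore give $k$ linearly independent classes in $SH_n^{\mathrm{c}}(W)$, so $\dim_{\mathbb{Q}} SH_n^{\mathrm{c}}(W)\geq k$.

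Suppose for contradiction that $M$ admits a dynamically convex contact form. Then $\mu^{\mathbb{Q}}_{LCZ}(\gamma)\geq n+1$ for every contractible Reeb orbit, so in the Gutt--Hutchings grading $SH^{+,\mathrm{c}}_\ast(W)=0$ in all degrees $\leq 2n-3$; in particular $SH^{+,\mathrm{c}}_n(W)=0$ whenever $n\geq 3$. The Viterbo--Cieliebak--Oancea long exact sequence
\[
\cdots\to H_{\ast+n}(W,\partial W)\to SH_\ast(W)\to SH^+_\ast(W)\to H_{\ast+n-1}(W,\partial W)\to\cdots,
\]
restricted to the contractible sector at $\ast=n$, reads $H_{2n}(W,\partial W)\to SH_n^{\mathrm{c}}(W)\to 0$, so $SH_n^{\mathrm{c}}(W)$ is a quotient of $H_{2n}(W,\partial W)\cong\mathbb{Q}$ and has dimension at most $1$. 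For $k>1$ this contradicts the lower bound of the previous paragraph.

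For a manifold $M'$ obtained from $\#_k Y_i$ by subcritical surgery, the corresponding filling $W'=W\cup H$ satisfies $SH_\ast(W')\cong SH_\ast(W)$ by Cieliebak, and the handle attachment can only enlarge the contractible sector (adding a $\pi_1$-generator in index $1$, killing one in index $2$, leaving $\pi_1$ unchanged otherwise); thus $\dim SH_n^{\mathrm{c}}(W')\geq k$ and the same LES argument applies. For a flexible critical handle either $SH_\ast(W')=0$, in which case the Viterbo transfer $SH_\ast(W')\to SH_\ast(W)$ sends unit to unit and forces $SH_\ast(W)=0$, contradicting the construction; or the unit of $W'$ is nonzero and a finer analysis of the transfer (tracing the idempotents $e_i$) produces the contradiction. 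The principal technical obstacle is verifying that Cieliebak's isomorphism refines to an isomorphism of the contractible sectors, so that the $k$ units from the individual fillings descend to genuinely independent contractible classes in $SH_n(W)$; the flexible critical case is also delicate since Cieliebak's theorem does not apply directly there.
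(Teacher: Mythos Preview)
Your approach is essentially the paper's: use $SH(W_1\natural\cdots\natural W_k)\cong\bigoplus_i SH(W_i)$ to get $\operatorname{rk}SH_n(W)\geq k$, then use the LES together with $SH^+_n(W)=0$ to force $\operatorname{rk}SH_n(W)\leq 1$. The paper packages the second step as a standalone lemma (dynamically convex $\Rightarrow$ $SH_n(W)\in\{0,\mathbb{Z}\}$) and then invokes it; you do the same computation inline. Your van Kampen argument showing the boundary-connected-sum isomorphism restricts to the contractible sector is actually more careful than the paper, which in Section~5 simply declares that $SH$ means the contractible part and uses the splitting without further comment.

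Two issues. First, your grading bound ``$SH^{+,\mathrm{c}}_\ast=0$ for $\ast\leq 2n-3$'' is off: in the paper's convention (Lazarev's, where generators sit in degree $\mu_{CZ}$), dynamical convexity gives $SH^+_k=0$ for all $k\leq n$ directly, so the restriction $n\geq 3$ is unnecessary.

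Second, and more seriously, your treatment of the flexible critical case is incomplete. The dichotomy ``either $SH(W')=0$ or trace idempotents through the transfer map'' does not close: when $SH(W')\neq 0$, knowing only that the transfer map is unital gives you one nonzero class in $SH_n(W')$, not $k$ of them, and ``a finer analysis'' is not a proof. The paper (and the literature) handles this by using that flexible handle attachment, like subcritical attachment, induces an isomorphism on $SH$: this follows from the Bourgeois--Ekholm--Eliashberg surgery triangle together with the vanishing of wrapped Floer homology for loose Legendrians. Once you use $SH(W')\cong SH(W)$, your contractible-sector argument (the inclusion $W\hookrightarrow W'$ is $\pi_1$-injective for an $n$-handle when $n\geq 3$) finishes the job exactly as in the subcritical case.
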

\begin{thm}[Theorem \ref{hom1}]\label{hom}
    Let $W^{2n}$ be a Weinstein domain which is a rational homology ball , that is $H_k(W,\partial W)=\mathbb{Q}$ for $k=2n$ and $0$ otherwise. Assume $n\geq 3$, then attaching a flexible/subcritical handle produces a manifold $W^\prime$ such that $\partial W^\prime$ cannot be dynamically convex.
\end{thm}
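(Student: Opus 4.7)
The plan is to assume for contradiction that $\partial W'$ carries a dynamically convex contact form and to deduce an inconsistency from the naturality of the Viterbo long exact sequence (\ref{les}) under the Liouville inclusion $\iota\colon W\hookrightarrow W'$.

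First, I would invoke surgery invariance of symplectic homology: for a subcritical handle this is Cieliebak's theorem, while for a flexible (critical) handle attached along a loose Legendrian sphere it follows from Bourgeois--Ekholm--Eliashberg, since the Chekanov--Eliashberg algebra of a loose sphere vanishes and their surgery exact triangle collapses. Either way the Viterbo transfer $\iota^*\colon SH_*(W')\to SH_*(W)$ is an isomorphism. On the topological side, $W$ being a rational homology ball gives $H^*(W;\mathbb{Q})=\mathbb{Q}$ only in degree $0$, while attaching a handle of index $j$ (with $2\le j\le n$) produces $H^*(W';\mathbb{Q})=\mathbb{Q}$ in degrees $0$ and $j$; in particular the restriction $\iota^*\colon H^j(W';\mathbb{Q})\to H^j(W;\mathbb{Q})=0$ is the zero map.

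The Reeb-orbit degree convention underlying (\ref{les}) translates the bound $\mu^\mathbb{Q}_{LCZ}(\gamma)\ge n+1$ into $SH^+_k(W')=0$ for all $k\le n$ in the component of $SH^+$ indexed by loops contractible in $W'$; for $n\ge 3$ the Lefschetz-type theorem for Weinstein domains forces $\pi_1(\partial W')\cong\pi_1(W')$, so this is precisely the component controlled by dynamical convexity. Plugging into the LES for $W'$ at $k=n-j$,
\[
SH^+_{n-j+1}(W')\to H^j(W')\to SH_{n-j}(W')\to SH^+_{n-j}(W')\to H^{j+1}(W'),
\]
the outer terms all vanish, collapsing the sequence to $0\to\mathbb{Q}\to SH_{n-j}(W')\to 0$, so the connecting map $H^j(W')\to SH_{n-j}(W')$ is an isomorphism. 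The contradiction then drops out of the naturality square
\[
\begin{CD}
H^j(W') @>{\cong}>> SH_{n-j}(W') \\
@V{0}VV @V{\cong}VV \\
H^j(W)=0 @>>> SH_{n-j}(W)
\end{CD}
\]
whose top-then-right composite is an isomorphism $\mathbb{Q}\to\mathbb{Q}$, while the left-then-bottom composite factors through $0$.

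The main difficulty will be ensuring the argument respects the $\pi_1$-splitting of $SH$: dynamical convexity controls only orbits contractible in $\partial W'$, and the surgery isomorphism must be shown to intertwine the correct components. For $n\ge 3$ the Lefschetz-type $\pi_1$-identification for Weinstein domains addresses the former, while the functoriality of Cieliebak's and Bourgeois--Ekholm--Eliashberg's constructions with respect to Viterbo's transfer handles the latter, modulo careful bookkeeping with free homotopy classes.
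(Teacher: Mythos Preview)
Your argument is correct and uses the same ingredients as the paper---surgery invariance of $SH$ under subcritical/flexible handle attachment, the vanishing $SH^+_k(W')=0$ for $k\le n$ forced by dynamical convexity, and naturality of the long exact sequence (\ref{les}) under the Viterbo transfer---but your route to the contradiction is noticeably shorter. The paper first applies the 4-lemma along the surgery ladder to propagate the low-degree vanishing of $SH^+$ from $W'$ down to $W$, then runs the long exact sequence for $W$ to compute $SH_q(W)=0$ for $q<n$, transports this back to $W'$ via surgery invariance, and finally reads off $H^k(W')=0$ from the long exact sequence for $W'$, contradicting $H^k(W')\cong\mathbb{Q}$. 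Your single naturality square in degree $n-j$ short-circuits this detour: once $H^j(W')\to SH_{n-j}(W')$ is an isomorphism and $SH_{n-j}(W')\to SH_{n-j}(W)$ is the transfer isomorphism, commutativity against the zero map $H^j(W')\to H^j(W)=0$ is immediately absurd. One small caveat: your stated range $2\le j\le n$ omits the index-$1$ handle, which is a genuine subcritical case covered by the theorem; your argument goes through verbatim there as well, since $H^1(W;\mathbb{Q})=0$ and both $SH^+_n(W')$ and $SH^+_{n-1}(W')$ vanish, so you should simply extend the range to $1\le j\le n$.
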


      \begin{thm} [Theorem \ref{cot con}]\label{cotangent connect}
    The connected sum of any Weinstein fillable contact manifold and a unit cotangent bundle $DT^*N$ of a simply connected manifold $N$ cannot be dynamically convex 
\end{thm}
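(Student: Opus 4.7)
The plan is to argue by contradiction via the symplectic homology of a Weinstein filling, using the same Viterbo-isomorphism obstruction that underlies Theorem~\ref{contra}. Suppose $M\#ST^*N$ admits a dynamically convex contact form. Choose a Weinstein filling $W$ of $M$; since $DT^*N$ is a canonical Weinstein filling of $ST^*N$, the boundary connected sum $X:=W\natural DT^*N$ is a Weinstein filling of $M\#ST^*N$.

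The first ingredient is the behaviour of positive (equivariant) symplectic homology under boundary connected sum of Liouville domains: in the conventions of Cieliebak--Oancea and Fauck,
\[SH^{+,S^1}_*(X)\;\cong\;SH^{+,S^1}_*(W)\;\oplus\;SH^{+,S^1}_*(DT^*N),\]
with the splitting compatible with the decomposition by free homotopy classes of loops. The second ingredient is Viterbo's isomorphism, which for a simply connected oriented $N$ of dimension $n$ yields
\[SH^{+,S^1}_*(DT^*N)\;\cong\;H^{S^1}_{*+n}(\Lambda N,N;\mathbb{Q}),\]
with non-vanishing classes in low degrees coming from Morse indices of closed geodesics on $N$; this is exactly the loop-space input used to prove Theorem~\ref{contra}.

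The third step exploits simple connectivity of $N$: for $\dim N\geq 3$ one has $\pi_1(ST^*N)=\pi_1(S^{n-1})=0$, so every Reeb orbit arising from the $ST^*N$-factor is contractible in $M\#ST^*N$, and the capping disks can be placed entirely in the $DT^*N$ piece so that Conley--Zehnder indices are preserved by the surgery. Consequently the summand $SH^{+,S^1}_*(DT^*N)$ sits entirely inside the contractible sector of $SH^{+,S^1}_*(X)$. Dynamical convexity of $M\#ST^*N$ would force the contractible part of $SH^{+,S^1}_k(X)$ to vanish in degrees below the threshold determined by $\mu^{\mathbb{Q}}_{LCZ}\geq n+1$, directly contradicting the Viterbo non-vanishing identified above.

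The main obstacle I anticipate is packaging the boundary connected sum formula carefully in a form compatible with equivariant $SH$ and with the splitting by free homotopy classes; this should follow from a Mayer--Vietoris or neck-stretching argument across the contact $1$-handle, but the bookkeeping of gradings and cappings needs care. A secondary point is the exceptional dimension $\dim N=2$ (so $N=S^2$ and $ST^*N=\mathbb{RP}^3$), where $\pi_1(ST^*N)$ is non-trivial; this case can be handled separately by Corollary~\ref{rational}, since a $3$-manifold that is a non-trivial connected sum cannot be a rational homology sphere.
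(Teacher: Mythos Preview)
Your overall strategy matches the paper's---reduce to the Viterbo/loop-space obstruction for $DT^*N$---but your central technical step is incorrect. The boundary-connected-sum splitting
\[
SH^{+,S^1}_*(W\natural DT^*N)\;\cong\;SH^{+,S^1}_*(W)\oplus SH^{+,S^1}_*(DT^*N)
\]
fails already for $W=DT^*N=B^{2n}$: since $B^{2n}\natural B^{2n}$ is again a ball, the left side has rank one in degree $n+1$, while the right side has rank two there. (The same counterexample kills the non-equivariant version.) The defect comes from the Morse part of the complex: a $1$-handle drops $\operatorname{rk} H^0$ by one, and this feeds into $SH^+$ precisely in degree $n$ via the long exact sequence of Proposition~\ref{les}. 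Using the five lemma on the handle-attachment diagram one does get $SH^+_k(W')\cong SH^+_k(W_0)\oplus SH^+_k(DT^*N)$ for $k\leq n-1$; but the loop-space contradiction of Section~\ref{cot} needs $SH^+_k(DT^*N)=0$ for all $k\leq n$, so degree $n$ is exactly where your argument has a hole. You even flag the splitting as the ``main obstacle''; it is in fact an obstruction, not just bookkeeping.

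The paper circumvents this by reversing the logic. It takes the disjoint-union filling $W=W_0\sqcup DT^*N$ (where the splitting of $SH^+$ is tautological), uses Section~\ref{cot} to conclude $SH^+_k(W)\neq 0$ for some $k\leq n$, and then pushes this non-vanishing forward to the connected-sum filling $W'$ via the transfer map and the $4$-lemma on the surgery diagram, treating $k=n$ separately precisely because $H^0$ intervenes there. No splitting of $SH^+(W')$ is ever claimed.

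A smaller point: your handling of $\dim N=2$ is also off. Corollary~\ref{rational} concerns \emph{strong} dynamical convexity, not dynamical convexity, and non-trivial connected sums of $3$-manifolds can certainly be rational homology spheres (e.g.\ any connected sum of lens spaces).
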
   

\subsection*{Organization of the paper}
In  section \ref{2}, we recall the definitions of dynamical and strong dynamical convexity,providing examples and counterexamples of these manifolds. In section \ref{3}, we review symplectic homology and prove theorem \ref{hom} using symplectic homology with local coefficients. We then prove theorem \ref{thm 1.11} and \ref{contra} using  Serre spectral sequences and Viterbo's isomorphism in section \ref{cot}. In section \ref{5}, we prove the list of obstructions to dynamical convexity using subcritical surgery, more specifically theorems \ref{subcritical},\ref{contact 0},\ref{SH},\ref{hom},\ref{cotangent connect}.

\subsection*{Acknowledgements}
I am indebted to both of my advisors, Jean François Barraud and Jean Gutt  for all the discussions and advice  and for their meticulous help with the exposition of this article. I would like to thank Alberto Abbondandolo for valuable comments regarding the first version of the paper. I would also like to acknowledge the financial support provided by EUR-MINT at the Institut de Mathématiques de Toulouse.

\section{Dynamical Convexity} \label{2}

Let $(M,\xi)$ be a $2n-1$ dimensional contact manifold. Let $\phi^{\alpha}$ be the Reeb flow and recall that we denote the set of closed orbits of this flow on $M$ to be $\mathcal{P}(\alpha)$.

Given a path in $Sp(2n-2)$ that starts at $id$ and ends in a matrix $A\in Sp(2n-2)$ such that $det(A-id)\neq 0$, one can associate an integer called Conley Zehnder index $\mu_{CZ}.$(cf\cite{mclean2016reeb},\cite{gutt2012conley} for details). The Conley Zehnder index  $\mu_{CZ}$ has a lower semicontinous extension in $Sp(2n-2)$ called $\mu_{LCZ}$ cf \cite{mclean2016reeb}.

For a contact manifold $(M,\xi)$ and a Reeb orbit $\gamma$, a trivialization of $\gamma^*\xi$ gives a path in $Sp_{2n-2}$ to which we can associate $\mu_{LCZ}$. However $\mu_{LCZ}$ depends on this trivialization. Note that a trivialization of $det_\mathbb{C}\xi$ suffices to define Conley-Zehnder indices of all the orbits and we can show that the indices is independent of the trivialization for null-homologous or contractible orbits. The assumption $c_1(\xi)=0$ is needed to show that such a global trivialization exists.

In order to have a well defined canonical grading, we can assume that $c_1(\xi)$ is torsion , which suffices to define  rational Conley Zehnder index $\mu^\mathbb{Q}_{CZ}$  of  contractible Reeb orbits taking values in $\mathbb{Q}$. When $H^1(M,\mathbb{Q})=0$, we can obtain a well defined $\mu^\mathbb{Q}_{CZ}$ for not just contractible orbits but all orbits as the set of homotopy classes of trivializations are determined by $H^1(M,\mathbb{Q})$ . (see\cite[Lemma $4.3$]{mclean2016reeb},\cite{li2025kahler} for details) . 

\begin{rem} \label{coincide}The rational index $\mu^\mathbb{Q}_{LCZ}(\gamma)$ of a Reeb orbit $\gamma$ coincides with the integer index $\mu_{LCZ}(\gamma)$ when the first chern class $c_1(\xi)$ is 0.\end{rem}

We say that a Reeb orbit $\gamma$ of period $T$ of a contact form $\alpha$ is non-degenerate if 
$det(d{\phi^\alpha}_T-id)\neq 0 .$ We say that $\alpha$ is non-degenerate if all of its Reeb orbits are non-degenerate. In \cite{mclean2016reeb}, $\mu^\mathbb{Q}_{LCZ}$ of an orbit is defined to be the infimum of non-degenerate nearby pertubations. Therefore $\mu^\mathbb{Q}_{LCZ}(\gamma)$ coincides with $\mu^\mathbb{Q}_{CZ}(\gamma)$ when $\gamma$ is non-degenerate. Therefore we can equivalently define dynamical convexity by using $\mu^\mathbb{Q}_{CZ}$ instead of $\mu^\mathbb{Q}_{LCZ}$ by restricting to non-degenerate contact forms.

\begin{rem}
    The definition of dynamical convexity that is prevalent in literature, for instance in \cite{kwon2024dynamically}\cite{hofer1999characterization}, usually includes the condition that the first chern class $c_1(\xi)$=0 instead of $c_1(\xi,\mathbb{Q})=0$. Note however that by \ref{coincide}, we have that normal definition is the same as ours when $c_1(\xi)=0$.   
\end{rem}

We recall the definiton \ref{sdc} of strong dynamical convexity.

\begin{defn}\textit{Strong Dynamical Convexity}: Let $(M,\xi)$ be a $2n-1 $ dimensional contact manifold with $c_1(\xi)$ torsion and $H^1(M,\mathbb{Q})=0$ . A contact form $\alpha$ ($\xi=$ker $\alpha$) is strongly dynamically convex if $\mu^\mathbb{Q}_{LCZ}(\gamma)\geq n+1$ for $\gamma$ a Reeb orbit of $\alpha$. ($M,\xi$) is said to be strongly dynamically convex if $\xi$ supports a strongly dynamically convex contact form $\alpha$
        
\end{defn}

\begin{rem}
    Strong dynamically convexity by definition  implies dynamical convexity. Note that when $\pi_1(M)=0,$ this would imply by Hurewicz and universal coefficients theorem that $H^1(M,\mathbb{Q})=0.$ Therefore dynamical convexity and strong dynamical convexity coincide for simply connected manifolds.
    \end{rem}

The only example known to us of a strongly dynamically convex manifold is $(S^{n-1},\xi_{std})$  and we conjecture that strongly dynamically convex manifolds are diffeomorphic to spheres. However we can find examples of dynamically convex manifolds, which are not strongly dynamically convex.

\begin{prop}\label{quotient}
    Let $(M,\xi)$ be a contact manifold with a dynamically convex contact form $\alpha$, $c_1(\xi)=0$ and suppose a discrete group $G$ acts on $M$ such that the action is free and proper and $\alpha$ is $G-$invariant. Then $M/G$ with the induced contact form $\hat{\alpha}$ is dynamically convex if $c_1(ker\hat{\alpha},\mathbb{Q})=0$. 
    \end{prop}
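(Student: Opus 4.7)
The plan is to directly compare the Reeb dynamics and Conley--Zehnder indices upstairs and downstairs via the covering $\pi : M \to M/G$, using that the $G$-action is free, proper, and contactomorphic. First I would observe that since $\alpha$ is $G$-invariant, it descends to a well-defined $1$-form $\hat\alpha$ on $M/G$ with $\pi^{\ast}\hat\alpha = \alpha$; the contact condition is preserved because $\pi$ is a local diffeomorphism, so $\hat\alpha$ is a contact form with $\ker\hat\alpha = d\pi(\xi)$. In particular, the Reeb vector field $R_{\hat\alpha}$ is $\pi$-related to $R_\alpha$, and hence Reeb orbits of $\hat\alpha$ are exactly the $\pi$-images of Reeb orbits of $\alpha$, preserving the period.

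Second, I would take an arbitrary contractible Reeb orbit $\hat\gamma \colon \mathbb{R}/T\mathbb{Z} \to M/G$ of $\hat\alpha$ and lift it to a path $\gamma$ in $M$. Since $\pi$ is a covering map and $\hat\gamma$ is a contractible loop, standard covering theory gives that every lift of $\hat\gamma$ is itself a loop in $M$; thus $\gamma$ is a closed Reeb orbit of $\alpha$ that is contractible in $M$. By dynamical convexity of $\alpha$,
\[
\mu^{\mathbb{Q}}_{LCZ}(\gamma) \geq n+1.
\]

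Third, I would compare the indices of $\gamma$ and $\hat\gamma$. Using the assumptions $c_1(\xi) = 0$ and $c_1(\ker\hat\alpha,\mathbb{Q}) = 0$, both rational Conley--Zehnder indices are canonically defined on contractible orbits via capping disks (or equivalently via trivializations of $\det_{\mathbb{C}}$ over capping disks). A capping disk for $\hat\gamma$ lifts to a capping disk for $\gamma$, and $d\pi$ identifies a symplectic trivialization of $\hat\gamma^{\ast}\ker\hat\alpha$ with one of $\gamma^{\ast}\xi$. Since $R_{\hat\alpha}$ and $R_\alpha$ are $\pi$-related, the linearized Reeb flow along $\gamma$ in the pulled-back trivialization equals the linearized Reeb flow along $\hat\gamma$ in the original one, so $\mu^{\mathbb{Q}}_{CZ}(\gamma) = \mu^{\mathbb{Q}}_{CZ}(\hat\gamma)$ whenever $\hat\gamma$ is non-degenerate. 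For the lower semicontinuous extension, I would approximate $\hat\alpha$ by non-degenerate contact forms $\hat\alpha_k$; their pullbacks $\alpha_k = \pi^{\ast}\hat\alpha_k$ are $G$-invariant non-degenerate perturbations of $\alpha$, and the equality of indices for nearby non-degenerate orbits together with the definition of $\mu^{\mathbb{Q}}_{LCZ}$ as an infimum over nearby non-degenerate perturbations yields $\mu^{\mathbb{Q}}_{LCZ}(\hat\gamma) = \mu^{\mathbb{Q}}_{LCZ}(\gamma) \geq n+1$.

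The main obstacle I anticipate is the bookkeeping around the rational Conley--Zehnder index: one must verify that the canonical choice of trivialization (from the rational vanishing of $c_1$ or from a capping disk) behaves well under the covering, and that the lower semicontinuous extension transports correctly. This is essentially a naturality check once the non-degenerate case is settled, and the $G$-invariant perturbation argument makes it work; the rest of the proof is a straightforward covering-space argument.
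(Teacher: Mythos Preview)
Your proof is correct and follows essentially the same approach as the paper: lift a contractible Reeb orbit of $\hat\alpha$ through the covering $\pi$ to a contractible Reeb orbit of $\alpha$, and use that $\pi$ is a local strict contactomorphism to identify the linearized Reeb flows (hence the Conley--Zehnder indices) upstairs and downstairs. Your version is more carefully fleshed out---particularly the discussion of capping disks, trivializations, and the passage to the lower semicontinuous extension via $G$-invariant non-degenerate perturbations---but the underlying argument is the same.
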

   \begin{proof}
       We obtain a covering map $M\xrightarrow{p}M/G$ since the action is free and proper. Now a contractible orbit $\gamma$ in $M/G$ lifts to a contractible Reeb orbit $\hat{\gamma}$ in $M$ by using the lifting lemma with respect to a filling of $\gamma$ by a disk $D$ and the the fact that $p^*\hat{\alpha}=\alpha.$ Now around a neighbourhood of a lift of $D$, say $\hat{D}$, we have a strict contactomorphism induced by $p$, which would imply that the $\mu_{LCZ}(\hat{\gamma})=\mu_{LCZ}(\gamma)$. This would imply $M/G$ is dynamically convex as $\mu_{LCZ}(\hat{\gamma})\geq n+1$.
   \end{proof}
   \begin{exmp} \ref{quotient} would imply that lens spaces $L(p,q)$ with the standard contact structures they inherit from $S^3$ are dynamically convex. However, note that lens spaces arise as links of singularities\cite{michel2020topology}, therefore conjecture \ref{shok} which has been proved for links in dimension 3  implies that they are not strongly dynamically convex.        
   \end{exmp}
   \begin{exmp}
      Note that strongly dynamically convex $3-$ manifolds are necessarily rational homology spheres by \ref{rational}, thereby apart from lens spaces, a natural candidate to examine is the Poincare homology sphere $(P,\xi)$ with its standard contact structure. However $P$ being a brieskorn manifold arising as the link of a Brieskorn polynomial implies that its not strongly dynamically convex by the proof of \ref{shok} upto complex dimension $2$. 
   \end{exmp}
   \begin{defn}[\cite{bowden2022tight}]

       A contact manifold $(M,\xi)$ is said to be algebraically tight if its contact homology algebra $CH(Y,\Lambda)$ over Novikov field $\Lambda$ doesn't vanish.
   \end{defn}    
 \begin{prop}
     Let $(M,\xi)$ be a dynamically convex contact manifold. Then $(M,\xi)$ is (algebraically) tight.
 \end{prop}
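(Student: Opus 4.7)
The plan is to show that the unit $1$ in the contact homology differential graded algebra represents a nonzero class, so that $CH(M,\xi;\Lambda) \neq 0$. First I would reduce to a nondegenerate dynamically convex contact form: since $\mu^{\mathbb{Q}}_{LCZ}$ is defined as an infimum of $\mu^{\mathbb{Q}}_{CZ}$ over nondegenerate nearby perturbations and is lower semicontinuous, any sufficiently small perturbation of $\alpha$ remains dynamically convex. I may then compute contact homology from the free graded commutative algebra $\mathcal{A}$ over $\Lambda$ on good Reeb orbits of $\alpha$, graded by $|\gamma| = \mu^{\mathbb{Q}}_{CZ}(\gamma) + n - 3$, with differential $\partial$ of degree $-1$ counting rigid $J$-holomorphic curves in the symplectization $\mathbb{R} \times M$ having one positive puncture and arbitrarily many negative punctures.

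The key observation is that the empty word (i.e.\ the unit $1 \in \mathcal{A}$) can appear in $\partial \beta$ only when $\beta$ contains a linear summand consisting of a single orbit $\gamma$, because applying the Leibniz rule to any product $\gamma_{i_1} \cdots \gamma_{i_k}$ with $k \geq 2$ produces only monomials of word length $\geq 1$. Moreover, the coefficient of $1$ in $\partial \gamma$ counts rigid $J$-holomorphic planes in $\mathbb{R} \times M$ asymptotic to $\gamma$ at the positive puncture. Such a plane forces $\gamma$ to be contractible, and rigidity (after quotienting by $\mathbb{R}$-translation) forces its Fredholm index to equal $1$, equivalently $|\gamma| = 1$.

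Now I would assume for contradiction that $1 = \partial X$ for some $X \in \mathcal{A}$. Then $|X| = 1$, and by the analysis above $X$ must contain as a linear summand a single orbit $\gamma$ that is contractible with $|\gamma| = 1$. But dynamical convexity forces every contractible Reeb orbit to satisfy
\[
|\gamma| = \mu^{\mathbb{Q}}_{CZ}(\gamma) + n - 3 \geq (n+1) + n - 3 = 2n - 2 \geq 2
\]
for $n \geq 2$, so no such $\gamma$ exists. This contradiction shows that $[1] \neq 0$ in $CH(M,\xi;\Lambda)$, so $(M,\xi)$ is algebraically tight.

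The main obstacle is not conceptual but foundational: the definition of contact homology over a Novikov field, together with transversality, compactness, and the Leibniz/plane-counting interpretation of $\partial$, is nontrivial in this generality; however, algebraic tightness as formulated in \cite{bowden2022tight} presupposes these foundations, so the argument can proceed at the chain level. A minor but necessary point is the preservation of dynamical convexity under the perturbation achieving nondegeneracy, which follows directly from the lower-semicontinuity property of $\mu^{\mathbb{Q}}_{LCZ}$ recalled earlier in this section.
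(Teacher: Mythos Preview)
Your proof is correct and follows exactly the idea underlying the paper's argument; the paper simply cites \cite[Proposition~3.2]{bowden2022tight}, whose content is precisely the degree argument you spell out (no contractible orbit with $|\gamma|=1$, i.e.\ $\mu_{CZ}=4-n$, so the unit in the contact dga cannot be exact). The only point you omit is the passage from algebraic tightness to ordinary tightness, which the paper handles by a one-line citation to \cite{casals2019geometric}.
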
  
\begin{proof}
      \cite[Proposition $3.2$]{bowden2022tight}
    states that every contact manifold ($M^{2n-1},\xi$) with rational first chern class and supporting a contact form with no contractible orbit of conley zehnder index $4-n$ is algebraically tight. Algebraic tightness of a contact manifold implies tightness \cite{casals2019geometric} .
\end{proof}

\section{Symplectic Homology} \label{3}
In this section we recall basic facts about symplectic homology before using symplectic homology of covering spaces to prove theorem \ref{homotop}. We refer to the the following surveys and papers for a nice introduction with more details, \cite{cieliebak2018symplectic},\cite{lazarev2020contact},\cite{oancea2004survey},\cite{seidel2008biased}. We follow sign and grading conventions as in \cite{lazarev2020contact}, \cite{cieliebak2018symplectic}.

\begin{defn}
    A \textit{Liouville domain} $(W,\lambda)$ with a Liouville 1-form $\lambda$ is an exact symplectic manifold with symplectic form $d\lambda$ such that the Liouville vector field $X_\lambda$ defined by $i_{X_\lambda}(d\lambda)=\lambda$ that points transversely outward to $\partial W$
\end{defn}
\begin{defn}
    A Weinstein domain $(W,\lambda)$ is a Liouville domain that can support a Morse function $\varphi$ which has maximal level set $\partial W$  and such that $X_\lambda$ is a gradient-like vector field for $\varphi$.
\end{defn}

Note that the boundary $(\partial W,\lambda|_{}\partial W)$ of a Liouville domain $(W,\lambda)$ is a contact manifold  by definition with contact form $\alpha=\lambda|\partial W$ and we call $W$, the Liouville filling of $\partial W$. Given a Liouville domain $(W,\lambda)$, we can attach the positive part of the symplectization $[1,\infty)\times \partial W$ to $W$ at $\partial W$ to get the manifold $\hat{W}:=W\bigsqcup_{\partial W} [1,\infty)\times \partial W $ which is called the Liouville completion of $W$. We extend the Liouville form $\lambda$ on $W$ by $r\lambda|_{\partial W}$ on the symplectization $[1,\infty)\times \partial W$. Here $r$ denotes the cylindrical coordinate of the symplectization and the identification between the symplectization and the Liouville domain is made by the diffeomorphism induced by the Liouville flow near the boundary. 

We say that a Hamiltonian $H$ on $\hat{W}$ is admissible if $H$ is $C^2-$small in $W$ and $H=sr+c$ in $\hat{W}\textbackslash W$, where $s$ denotes the slope and $c\in \mathbb{R}$. We denote the set of admissible Hamiltonians by $H_{adm}$.
We also specify a class of admissible almost complex structures $J_{adm}$ on $\hat{W}$ such that for $J\in J_{adm}$, we have $Jker\alpha=ker\alpha$, $J|ker\alpha$  is a compatible almost complex structure on the bundle $\ker \alpha$ and independent of $r$, $J(r\partial_r)=R_\alpha$ on $\hat{W} \textbackslash W $ and $J$ is compatible with the symplectic form on $\hat{W}$.

 For our choice of $H\in H_{adm}$, the periodic trajectories $\mathcal{P}(H)$of the Hamiltonian vector field $X_H$ correspond to Morse critical points on $W$ and Reeb orbits on a level $r$ in $\hat{W}/W$ of period $s$ since $X_H=sR_\alpha.$ The Hamiltonian orbits are degenerate at the cylindrical end and we  consider a small time-dependent pertubation of $H$, so that each $S^1$ family degenerates into two orbits\cite{bourgeois2009exact}.

 Given $H\in H_{adm}$ and $J\in J_{adm}$, we define $SC(W,H,J):=\bigoplus _{\gamma\in \mathcal{P}(H)}{\mathbb{Z}}$ where the Hamiltonian is considered after a small-time dependent pertubation. When $c_1(W,\mathbb{Q})=0,$ we can associate a rational grading $\mu^\mathbb{Q}_{CZ}$ to orbits in $\mathcal{P}(H)$ and therefore to $SC(W,H,J)$. We can define a differential $\partial:SC_{k}\rightarrow SC_{k-1}$ such that $\partial \gamma=\bigoplus_{x\in SC_{k-1}}|\mathcal{M}(x,\gamma,H,J)/\mathbb{R}|x$.  After a small-time dependent pertubation of $(H,J)$, $\mathcal{M}(x,\gamma,H,J)$ can be shown to be a compact $1-$dimenisonal manifold , where $\mathcal{M}(x_-,x_+,H,J):=\{u:\mathbb{R}\times S^1\rightarrow \hat{W};\lim_{s\rightarrow\pm\infty}u(s,.)=x_\pm,\partial_su+J\partial_tu-JX_H=0\}$. After quotienting with $\mathbb{R}$ and then considering $0-$dimensional moduli spaces we can show $\partial\circ\partial=0$ and obtain the homology group of the chain complex which we denote by $SH(W,H,J)$.

 When slope of $H_1$ is less than the slope of $H_2$, there exists a continuation map $SH(W,H_1,J_1)\rightarrow SH(W,H_2,J_2)$ and these maps form a directed system. We define the \textbf{symplectic homology} $SH(W)$ of W to be direct limit $\underset{slope(H)}{\varinjlim} SH(W,H,J)$.

Note that the main difference between symplectic homology and Hamiltonian Floer homology is the fact that  the underlying manifold is non-compact. Hence we need to use a maximum principle to ensure the compactness of moduli spaces. However for the maximum principle to work in the moduli spaces involved in the continuation map, we need the Hamiltonians to have non-decreasing slope. 

We have a canonical filtration on symplectic homology induced by the action functional $\mathcal{A}_H$ for a small-time dependent perturbation of $H\in H_{adm}.$ Given $\gamma \in\mathcal{L}W$,  the action functional is defined by $\mathcal{A}_H(\gamma)=\int_{S^1}\gamma^*\lambda-\int_{S^1}H(\gamma(t))dt$. Note that, following the convention  in \cite{lazarev2020contact}, we consider the positive gradient flow in Floer trajectories which increases the action and therefore the differential decreases the action. This  implies the existence of  a well-defined filtered subcomplex $SC^{<k}(W,H,J)$ of $SC(W,H,J)$ whose homology is denoted by  $SH^{<k}(W,H,J)$. Taking direct limits we can define the filtered symplectic homology groups $SH(W)^{<k}:=\underset{slope(H)}{\varinjlim} SH(W,H,J)^{<k}$.

For a small time dependent perturbation of $H$ and $\epsilon>0$ small enough, we have that all the Hamiltonian orbits of $\hat{W}$ correspond to the Morse critical points in $W$ as $H$ is $C^2-$small. That is $SC^{<\epsilon}(W,H,J)$ is the Morse complex with a grading shift. Now the subcomplex $SC^{+,<\epsilon}:=SC/SC^{<\epsilon}$  is generated by Hamiltonian orbits corresponding to the Reeb orbits in the symplectization. We obtain an invariant  by taking the direct limit of the homology $SH^{+,<\epsilon}$ of this chain complex which is called \textbf{positive symplectic homology} $SH^+$.
\begin{equation}   
SH^+(W)=\underset{slope(H)}{\varinjlim} SH(W,H,J)^{+,<k}
\end{equation}

By construction the short exact sequence of chain complexes $0\rightarrow SC^{<\epsilon}\rightarrow SC\rightarrow SC^+\rightarrow 0$ gives an exact triangle between the Morse homology, symplectic homology and the positive symplectic homology.

\begin{rem}\label{contact form}
    The symplectic homology groups or positive symplectic homology groups do not depend on the contact form of $\partial W$ (cf \cite[section 3.9]{ritter2010deformations} ). More precisely, for a contact form $\alpha$ on $\partial W$, one can show that ${W}$ is Liouville isomorphic to a Liouville domain $W^\prime$ which induces the contact form $\alpha$ on $\partial W^\prime$ upto a strict contactomorphism.   
\end{rem}

\begin{prop}\label{les}
    For a Liouville domain $W^{2n}$, there exists a long exact sequence of groups
   
\[\begin{tikzcd}[cramped]
	{...} & {H_{*+n}(W,\partial W)} & {SH_{*}(W)} & {SH_{*}^{+}(W)} & {H_{*+n-1}(W,\partial W)} & {...}
	\arrow[from=1-1, to=1-2]
	\arrow[from=1-2, to=1-3]
	\arrow[from=1-3, to=1-4]
	\arrow[from=1-4, to=1-5]
	\arrow[from=1-5, to=1-6]
\end{tikzcd}\]
    
\end{prop}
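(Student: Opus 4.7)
The plan is to start from the short exact sequence of chain complexes
\[
0 \to SC^{<\epsilon}(W,H,J) \to SC(W,H,J) \to SC^{+}(W,H,J) \to 0
\]
already exhibited in the construction of $SH^+$ above, valid for each admissible pair $(H,J)$ once the slope is small enough that every non-constant 1-periodic orbit has action at least $\epsilon$. The snake lemma produces a long exact sequence in homology, and the continuation maps between such triples are morphisms of short exact sequences, so passing to the direct limit over slopes preserves exactness. Consequently, the only substantive task is to identify the direct limit of $H_{*}(SC^{<\epsilon}(W,H,J))$ with $H_{*+n}(W,\partial W)$.

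To carry out this identification, I would take $H$ to be a small time-dependent perturbation of a $C^2$-small Morse function on $W$, extended linearly in $r$ in the cylindrical end, and choose $\epsilon$ smaller in absolute value than any critical value of $H$. Under this choice, the only 1-periodic orbits of $X_H$ with action below $\epsilon$ are the (perturbed) constant orbits at the Morse critical points of $H|_{W}$. The standard Salamon--Zehnder rescaling argument then identifies the Floer moduli spaces between such constant orbits with the gradient trajectories of $H$, yielding an isomorphism of chain complexes between $SC^{<\epsilon}(W,H,J)$ and the Morse complex of $H$ on $W$, with a grading shift coming from the formula $\mu_{CZ}(p) = n - \mathrm{ind}_{p}(H)$ for the constant orbit at a critical point $p$ of a $C^{2}$-small positive Morse function.

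The Morse complex itself must then be compared with $H_{*}(W,\partial W)$. With the sign convention of the paper, the Floer differential decreases action, and on constants it corresponds to the positive gradient flow of $H$, equivalently to the negative gradient flow of $-H$. Choosing $H$ so that $-H$ attains its minimum on $\partial W$ makes the negative gradient flow of $-H$ escape through the boundary, so the resulting Morse homology computes $H_{*}(W,\partial W)$. Combining this with the grading shift by $n$ yields $H_{*}(SC^{<\epsilon}(W,H,J)) \cong H_{*+n}(W,\partial W)$, and the identification is stable under the continuation maps as the slope grows, since $SC^{<\epsilon}$ is unchanged in the relevant action window; hence the same identification holds in the direct limit.

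The main technical obstacle is the careful bookkeeping of grading and sign conventions, namely the shift by $n$ and the appearance of the pair $(W,\partial W)$ rather than $W$ alone. Both features are forced by the conventions already fixed in the paper (action functional $\mathcal{A}_{H}(\gamma) = \int_{S^{1}} \gamma^{*}\lambda - \int_{S^{1}} H(\gamma(t))\,dt$, differential decreasing action, Liouville vector field transverse outward along $\partial W$), so once these are pinned down the identification is canonical and the long exact sequence of the proposition follows.
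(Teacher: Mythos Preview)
Your proposal is correct and follows precisely the approach the paper itself sketches: the paper states, immediately before the proposition, that the short exact sequence $0\to SC^{<\epsilon}\to SC\to SC^{+}\to 0$ ``gives an exact triangle between the Morse homology, symplectic homology and the positive symplectic homology,'' and in the remark immediately after attributes the grading shift to the relation $\mu_{CZ}=n-\operatorname{ind}_{\mathrm{Morse}}$. Your write-up is in fact more complete than the paper's, since you spell out why the relative homology $H_{*}(W,\partial W)$ (rather than $H_{*}(W)$) appears and why the identification survives the direct limit.
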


\begin{rem}
    The grading shift occurs because the morse index of a critical point is related to the Conley Zehnder index by a difference of $n$.
\end{rem}

\begin{rem}
    The positive symplectic homology groups although being generated by Reeb orbits are not in general  invariants of the contact boundary. It depends on the filling as the Floer trajectories can enter the filling. However it can be shown that for contact manifolds that admit dynamically convex or more generally $ADC-$contact forms as in \cite{zhou2020vanishing}, the positive symplectic homology is independent of fillings $W$ such that $\pi_1(\partial W)\rightarrow \pi_1(W)$ is injective and $c_1(W,\mathbb{Q})$ is torsion. These fillings are called topologically simple.
\end{rem}
\begin{rem}\label{orb}
    For a strongly dynamically convex contact manifold $M^{2n-1}$, $SH_{k}^+(M)=0$ for $k< n+1$ since there are no generators in these groups. 
\end{rem}

Note that for a Liouville domain $W$ with contact boundary $(\partial W,\xi)$, $c_1(\xi,\mathbb{Q})=i^*c_1(TW,\mathbb{Q})$. Therefore $c_1(TW,\mathbb{Q})=0$ would imply $c_1(\xi,\mathbb{Q})=0$. For Weinstein domains of dimension greater than $6$, the converse is also true and we replicate the argument in \cite{lazarev2020contact} with rational coefficients. 

\begin{prop}\label{chern} \
    For a Weinstein domain $W$ of dimension $2n$ and contact boundary $(\partial W,\xi)$ with $n\geq 3$, we have $c_1(\xi,\mathbb{Q})=0$ iff $c_1(TW,\mathbb{Q})=0$.
\end{prop}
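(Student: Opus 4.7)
The forward direction is already observed in the excerpt: since $\xi = \ker(\lambda|_{\partial W})$ is a symplectic subbundle of $TW|_{\partial W}$ of complex codimension zero (the Reeb and Liouville directions pair to a trivial complex line summand), we have $c_1(\xi,\mathbb{Q}) = i^*c_1(TW,\mathbb{Q})$, where $i : \partial W \hookrightarrow W$ is the inclusion. So the content is the converse: if $i^*c_1(TW,\mathbb{Q})=0$ then $c_1(TW,\mathbb{Q})=0$.

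My plan is to show that the restriction map $i^* : H^2(W;\mathbb{Q}) \to H^2(\partial W;\mathbb{Q})$ is injective as soon as $n \geq 3$; this is purely a statement about the topology of Weinstein domains and has nothing to do with the specific class $c_1(TW,\mathbb{Q})$. Combined with the forward direction, injectivity of $i^*$ immediately gives the desired equivalence.

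To prove injectivity, I would use the long exact cohomology sequence of the pair $(W,\partial W)$ with rational coefficients:
\[
\cdots \to H^2(W,\partial W;\mathbb{Q}) \to H^2(W;\mathbb{Q}) \xrightarrow{i^*} H^2(\partial W;\mathbb{Q}) \to \cdots
\]
and reduce to showing $H^2(W,\partial W;\mathbb{Q})=0$. Here I invoke the core topological fact about Weinstein domains: the Liouville vector field is gradient-like for the Morse function $\varphi$, and all of its critical points have Morse index at most $n$ (this is the defining feature of Weinstein, since handles of higher index would violate the outward-transversality of $X_\lambda$ along their belt spheres). Consequently $W$ has the homotopy type of a CW complex of dimension at most $n$, so $H_k(W;\mathbb{Q})=0$ for $k>n$. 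Lefschetz duality then gives
\[
H^k(W,\partial W;\mathbb{Q}) \cong H_{2n-k}(W;\mathbb{Q}) = 0 \quad \text{whenever } 2n-k > n, \text{ i.e., } k < n.
\]
For $n \geq 3$ we have $2 < n$, so $H^2(W,\partial W;\mathbb{Q})=0$, which makes $i^*$ injective and closes the argument.

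I do not expect any genuine obstacle; the only point that requires care is the handle-index bound for Weinstein domains, which is standard but needs to be cited properly (e.g.\ from \cite{cieliebak2002handle}). It is worth noting explicitly that the hypothesis $n \geq 3$ is sharp at the level of this argument: in dimension $2n=4$ the group $H^2(W,\partial W;\mathbb{Q})$ can be nonzero (it is dual to $H_2(W;\mathbb{Q})$, which is typically the main homological content of a Weinstein $4$-manifold), and indeed $c_1(\xi,\mathbb{Q})=0$ does not imply $c_1(TW,\mathbb{Q})=0$ for Weinstein fillings in dimension $4$.
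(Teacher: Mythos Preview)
Your proof is correct and follows essentially the same route as the paper: both use the long exact sequence of the pair $(W,\partial W)$, Poincar\'e--Lefschetz duality $H^2(W,\partial W;\mathbb{Q})\cong H_{2n-2}(W;\mathbb{Q})$, and the fact that a Weinstein domain has the homotopy type of a CW complex of dimension at most $n$ to conclude that $i^*$ is injective in degree~$2$ once $n\geq 3$. Your added remarks on the handle-index bound and on sharpness at $n=2$ are accurate but not needed for the argument.
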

\begin{proof}
    For the inclusion $i:\partial W\rightarrow W$ we have $c_1(\xi,\mathbb{Q})=i^*c_1(TW,\mathbb{Q})$. This shows that $c_1(TW,\mathbb{Q})=0$ implies $c_1(\xi,\mathbb{Q})=0$. To prove the converse, note that any Weinstein domain $W^{2n}$ has the homotopy type of a CW complex of dimension at most $n$.Therefore $H_{2n-2}(W,\mathbb{Q})=0$ if $2n-2>n$. Now by Poincare-Lefshetz duality, we have $H^2(W,\partial W,\mathbb{Q})\cong H_{2n-2}(W,\mathbb{Q})=0$ when $n\geq 3$. Therefore the long exact sequence in cohomology 
       $H^2(W,\partial W,\mathbb{Q})\rightarrow H^2(W,\mathbb{Q})\xrightarrow{i^*} H^2(\partial W,\mathbb{Q})$ implies that $i^*$ is injective and therefore $c_1(TW,\mathbb{Q})=0$ if $c_1(\xi,\mathbb{Q})=0.$
    \end{proof}

\begin{rem} \label{rem}
   Note that the vanishing of positive symplectic homology is an invariant of dynamically convex contact manifolds when the manifold is simply connected. This is a consequence of the fact that a manifold that is simply connected and dynamically convex is by definition strongly ADC  and therefore linearized contact homology is well defined for the contact manifold (it has a unique augmentation) \cite{chaidez2024contact}, which vanishes if and only if the positive symplectic homology vanishes for fillable manifolds due to the existence of the Gysin sequence after identifying linearized contact homology with positive $S^1-$equivariant symplectic homology.
\end{rem}

\begin{prop}[\cite{viterbo1999functors}]\label{vit}
    Given a Liouville subdomain $W^\prime$ of a Liouville domain $W$, there exists a map called the Viterbo transfer map
    $SH(W)\rightarrow SH(W^\prime)$
\end{prop}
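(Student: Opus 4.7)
The plan is to carry out the standard Viterbo construction from \cite{viterbo1999functors}. Using the Liouville flow on $W$, identify a collar neighbourhood of $\partial W'$ inside $W \setminus W'$ with $([1, R_0] \times \partial W', d(r\alpha'))$, where $\alpha' := \lambda|_{\partial W'}$ and $R_0 > 1$ depends on the geometry of $W$. For each slope $s > 0$ and each $R \in (1, R_0)$, I would build an admissible Hamiltonian $H_{s,R}$ on $\hat{W}$ of ``two-step'' shape: $C^2$-small on $W'$; linear of slope $s$ on $[1, R] \times \partial W'$; approximately constant on the intermediate region $W \setminus (W' \cup [1, R] \times \partial W')$; and linear of slope $s$ on the cylindrical end $[1, \infty) \times \partial W$. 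After a small time-dependent perturbation, the $1$-periodic orbits of $X_{H_{s,R}}$ split into four disjoint groups: (a) Morse critical points in $W'$; (b) orbits on slices $\{r\} \times \partial W'$ corresponding to closed Reeb orbits of $\alpha'$ of period at most $s$; (c) Morse critical points in the intermediate plateau; and (d) orbits on slices of the cylindrical end corresponding to Reeb orbits of $\lambda|_{\partial W}$ of period at most $s$.

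A direct computation of the Hamiltonian action shows that the ``inner'' generators (a) and (b) have actions in an interval $[-\epsilon, s + \epsilon]$ for a small $\epsilon > 0$, whereas the ``outer'' generators (c) and (d) have actions bounded above by $-sR + Cs$ for a constant $C$ independent of $R$, which is arbitrarily negative for $R$ close to $R_0$. Since the paper's convention has the Floer differential decrease the action, the outer generators span a subcomplex $SC_{\mathrm{out}}$ and the inner generators generate the corresponding quotient $SC_{\mathrm{in}}$. Using an integrated maximum principle at the hypersurface $\{R\} \times \partial W'$ with an admissible almost complex structure of cylindrical form in a neighbourhood of it, I would then verify that Floer cylinders connecting two inner generators are confined to $W' \cup [1, R] \times \partial W'$. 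Identifying this region with a subdomain of $\hat{W}'$, and matching the restriction of $H_{s,R}$ there with an admissible Hamiltonian $H'_s$ of slope $s$ on $\hat{W}'$, yields a chain isomorphism $SC_{\mathrm{in}} \cong SC(W', H'_s)$.

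On homology, this produces a map $SH(W, H_{s,R}) \to SH(W', H'_s)$. Compatibility with continuation morphisms as $s$ and $R$ increase, together with cofinality of the family $\{H_{s,R}\}$ among admissible Hamiltonians on $\hat{W}$, then allows passing to direct limits and yields the transfer map $SH(W) \to SH(W')$. The main obstacle is the confinement argument in the previous paragraph: one must choose admissible almost complex structures so that $\{R\} \times \partial W'$ is of contact type, and verify that the integrated maximum principle rules out Floer cylinders with inner asymptotic ends escaping outward. Once confinement is established, the identifications, grading match, and passage to the direct limit follow formally from the machinery recalled in Section \ref{3}.
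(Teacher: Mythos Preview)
Your sketch correctly outlines the standard two-step Hamiltonian construction of the Viterbo transfer map from \cite{viterbo1999functors}. The paper itself does not supply a proof of this proposition---it simply states the result with a citation to Viterbo---so there is nothing further to compare; your proposal in fact provides more detail than the paper does.
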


The above maps are used to show that attaching a subcritical handle induces an isomorphism in $SH$. For more details see \cite{fauck2016handle} or \cite{cieliebak2002handle}.

For cotangent bundles $T^*M$, Viterbo showed that there is an isomorphism between the $SH(T^*M)$ and the homology of the loop space $H_*(\mathcal{L}M)$ with $ \mathbb{Z}/2\mathbb{Z}$ coefficients. In fact this isomorphism can be lifted to $\mathbb{Z}$ coefficients after twisting by a suitable local system(\cite{abouzaid2013symplectic}). However, the isomorphism with $\mathbb{Z}/2\mathbb{Z}$ suffices for us. Note that for a cotangent bundle $T^*M$ of a manifold $M$, we have $c_1(T^*M)=0$ and conventions are chosen so that the gradings match.

\begin{thm}[\cite{viterbo1999functors}] \label{Viterbo}
\textbf{(Viterbo)}
    Given $T^*M$ the cotangent bundle of $M$, we have $SH_k(T^*M,\mathbb{Z}/2\mathbb{Z})\cong H_k(\mathcal{L}M,\mathbb{Z}/2\mathbb{Z}).$
\end{thm}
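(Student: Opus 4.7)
The plan is to identify a cofinal family of admissible Hamiltonians on $\widehat{T^*M}$ whose Floer theory computes the Morse homology of the energy functional on the free loop space $\mathcal{L}M$, and then pass to the limit. Fix a Riemannian metric $g$ on $M$ and consider Hamiltonians $H_a$ that agree with the fiberwise quadratic function $\tfrac12|p|_g^2$ on a large disk bundle and are linear in $|p|_g$ of slope $a$ outside. Since the Liouville completion of $DT^*M$ is canonically $T^*M$ itself, such $H_a$ form a cofinal system in $H_{adm}$, so $SH_*(T^*M)=\varinjlim_a SH_*(T^*M,H_a,J_a)$ for any compatible choice of $J_a \in J_{adm}$.

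The first key step is to identify the generators: under the Legendre transform associated to $g$, the 1-periodic orbits of $X_{H_a}$ lying in the region where $H_a=\tfrac12|p|_g^2$ correspond bijectively to closed geodesics on $M$ of length at most $a$, which are precisely the critical points of the energy functional
\[
E : \mathcal{L}M \longrightarrow \mathbb{R}, \qquad E(\gamma)=\tfrac12\int_{S^1}|\dot\gamma(t)|_g^2\,dt,
\]
in the corresponding sublevel set. By a classical computation (originally due to Duistermaat, and adapted to the Floer setting by Abbondandolo--Schwarz), the Conley--Zehnder index of such a Hamiltonian orbit equals, up to the conventional shift by $n$ fixed in the paper, the Morse index of the corresponding geodesic as a critical point of $E$. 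Thus the chain groups of $SC(T^*M,H_a,J_a)$ match, as graded $\mathbb{Z}/2$-vector spaces, the Morse chain complex of $E$ below the relevant action window.

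The heart of the proof, and what I expect to be the main obstacle, is matching the differentials. Choose $J_a$ to be the almost complex structure induced by $g$ via the Sasaki construction; then the Floer equation for $H_a$ on strips $\mathbb{R}\times S^1 \to T^*M$ can be rewritten as a perturbed nonlinear heat equation on $\mathcal{L}M$ via the adiabatic limit argument of Salamon--Weber (alternatively, one can use Abbondandolo--Schwarz's chain-level comparison between the Floer complex and the Morse complex of $E$). The difficulty is twofold: proving the necessary $L^\infty$ and energy estimates to ensure the relevant moduli spaces of Floer strips are compact in spite of the noncompactness of $T^*M$ (here fiberwise convexity and the maximum principle from Section~\ref{3} are crucial), and producing a bijection with unparameterized negative gradient trajectories of $E$ between critical points of adjacent index. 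Since we work over $\mathbb{Z}/2\mathbb{Z}$, we are spared the orientation bookkeeping and need only count trajectories mod 2.

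Granting this chain-level identification, for each $a>0$ one obtains an isomorphism $SH_*(T^*M,H_a,J_a)\cong H_*^{\leq a}(\mathcal{L}M;\mathbb{Z}/2\mathbb{Z})$ of truncations by the energy filtration, and this isomorphism is compatible with the continuation maps on the Floer side and the inclusion maps of sublevel sets on the Morse side. Passing to the direct limit as $a\to\infty$ yields
\[
SH_k(T^*M;\mathbb{Z}/2\mathbb{Z}) \;\cong\; \varinjlim_a H_k^{\leq a}(\mathcal{L}M;\mathbb{Z}/2\mathbb{Z}) \;\cong\; H_k(\mathcal{L}M;\mathbb{Z}/2\mathbb{Z}),
\]
where the last equality uses that $\mathcal{L}M$ is the union of its energy sublevel sets. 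Finally, independence of the auxiliary metric $g$ and of the choice of cofinal family follows from the usual continuation invariance of $SH$ together with Remark~\ref{contact form}.
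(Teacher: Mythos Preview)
The paper does not prove this theorem; it is quoted as a known result of Viterbo, with a citation to \cite{viterbo1999functors}, and is used as a black box in Section~\ref{cot}. There is therefore no ``paper's own proof'' to compare against.

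That said, your sketch is a reasonable outline of one of the standard modern proofs of Viterbo's isomorphism, essentially the Abbondandolo--Schwarz approach (with a nod to the Salamon--Weber adiabatic limit as an alternative for the differential). The main steps you identify---cofinality of fiberwise quadratic Hamiltonians, the Legendre correspondence between Hamiltonian orbits and closed geodesics, the index matching, and passage to the direct limit over energy sublevels---are all correct in spirit. You are also right that working over $\mathbb{Z}/2\mathbb{Z}$ sidesteps the orientation issues that, over $\mathbb{Z}$, force the twisted coefficients mentioned just before the theorem. If you were actually asked to supply a proof here, the honest thing to do is exactly what the paper does: cite Viterbo, Abbondandolo--Schwarz, or Salamon--Weber, since a self-contained argument is well beyond the scope of a paper that merely uses the result.
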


\begin{cor} \textbf{(Viterbo)}\label{pvit}
    For $T^*M$ , we have $SH^+_k(T^*M,\mathbb{Z}/2\mathbb{Z})\cong H_k(\mathcal{L}M,M,\mathbb{Z}/2\mathbb{Z}).$
\end{cor}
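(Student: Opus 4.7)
The plan is to derive the corollary from Viterbo's Theorem \ref{Viterbo} by combining it with the long exact sequence of Proposition \ref{les} applied to $W = T^*M$ (the unit disk cotangent bundle, whose boundary is $ST^*M$) and identifying the ordinary homology piece with $H_*(M)$.

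First, I would apply Proposition \ref{les} to $W = T^*M$, which is a Liouville domain of dimension $2n$ where $n = \dim M$. The long exact sequence reads
\begin{equation*}
\cdots \to H_{*+n}(T^*M, ST^*M) \to SH_*(T^*M) \to SH^+_*(T^*M) \to H_{*+n-1}(T^*M, ST^*M) \to \cdots
\end{equation*}
Since $T^*M$ is the total space of a rank-$n$ vector bundle over $M$, the Thom isomorphism with $\mathbb{Z}/2\mathbb{Z}$ coefficients (no orientation needed) gives $H_{*+n}(T^*M, ST^*M; \mathbb{Z}/2\mathbb{Z}) \cong H_*(M; \mathbb{Z}/2\mathbb{Z})$. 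Substituting this and Viterbo's identification $SH_*(T^*M; \mathbb{Z}/2\mathbb{Z}) \cong H_*(\mathcal{L}M; \mathbb{Z}/2\mathbb{Z})$ from Theorem \ref{Viterbo}, the sequence becomes
\begin{equation*}
\cdots \to H_*(M) \to H_*(\mathcal{L}M) \to SH^+_*(T^*M) \to H_{*-1}(M) \to \cdots
\end{equation*}
all with $\mathbb{Z}/2\mathbb{Z}$ coefficients.

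Next, I would compare this with the long exact sequence of the pair $(\mathcal{L}M, M)$, where $M \hookrightarrow \mathcal{L}M$ as constant loops:
\begin{equation*}
\cdots \to H_*(M) \to H_*(\mathcal{L}M) \to H_*(\mathcal{L}M, M) \to H_{*-1}(M) \to \cdots
\end{equation*}
The essential point is that, under Viterbo's isomorphism, the map $H_*(M) \to SH_*(T^*M)$ coming from Proposition \ref{les} corresponds to the inclusion of constants $H_*(M) \to H_*(\mathcal{L}M)$. This follows from the construction of Viterbo's isomorphism, in which critical points of a $C^2$-small Hamiltonian on $T^*M$ (generating the Morse summand $H_{*+n}(T^*M, \partial T^*M)$) are sent to constant loops in $\mathcal{L}M$; the comparison is standard (see e.g. the proofs in \cite{viterbo1999functors, abouzaid2013symplectic}). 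With the two exact sequences intertwined by chain-level naturality, the five lemma yields $SH^+_*(T^*M; \mathbb{Z}/2\mathbb{Z}) \cong H_*(\mathcal{L}M, M; \mathbb{Z}/2\mathbb{Z})$.

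The main delicate point is verifying that the natural map $H_*(M) \to SH_*(T^*M)$ in the Liouville long exact sequence matches the inclusion of constant loops under Viterbo's identification. This compatibility is well documented but not purely formal; once granted, the argument reduces to comparison of long exact sequences and the five lemma. All other steps, Thom isomorphism, substitution, and the pair sequence, are routine.
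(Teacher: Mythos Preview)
The paper does not supply its own proof of this corollary; it is stated as a result of Viterbo and attributed to \cite{viterbo1999functors}. Your derivation is the standard one and is correct: combine the long exact sequence of Proposition~\ref{les}, the Thom isomorphism, Theorem~\ref{Viterbo}, and the identification of the map $H_*(M)\to SH_*(T^*M)$ with the inclusion of constant loops, then apply the five lemma against the long exact sequence of the pair $(\mathcal{L}M,M)$. In fact, the paper uses exactly these ingredients in the proof of Lemma~\ref{isomorphism}, so your argument aligns with how the paper implicitly treats this fact.
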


\begin{rem}
    Note that it is common to consider only contractible orbits in the definition of symplectic homology. Since the differential preserves the homotopy class of orbits, we get a decomposition $SH=\bigoplus_{\gamma\in \pi_1} SH_\gamma$ where $SH_\gamma$ denotes the subgroup generated by orbits in the homotopy class of $\gamma$. Proposition \ref{les} would go through if we restrict to contractible orbits as all the Morse critical points are included in this class of orbits. Furthermore this would imply  $SH(W)_\gamma\cong SH(W)^{+}_\gamma$ for $\gamma\neq 0$ in $\pi_1(W).$ \end{rem}

\subsection{Symplectic homology of covering spaces.}
Zhou \cite[Section 3.3]{zhou2021symplectic}\cite{zhou2023fillings}  defines and uses symplectic homology of covering spaces to study fillings of asymptotically dynamically convex contact manifolds. Note from here onward till the end of the section, we use symplectic homology generated with contractible Reeb orbits. Given a covering $\Tilde{W}\xrightarrow{\pi}W$, $SH(\Tilde{W})$ is defined as the group generated by lifts of the periodic orbits of $W$ and the differential is determined by counting the moduli spaces of (unique) lifted trajectories. More precisely the chain group is defined as $SC_*(\Tilde{W}):=\bigoplus_x \Pi_{\pi(\Tilde{x})=x}\Tilde{x}$ and it could be an infinite product in the fiber direction. The canonical map $x\rightarrow \Pi_{\pi(\Tilde{x})=x}\Tilde{x} $ induces maps on the level of $SH^{<\epsilon},SH, SH^+$ and fits into the long exact sequence of symplectic homology \cite[4.1]{zhou2023fillings}. Furthermore the map is a unital ring map in $SH$.

  \[\begin{tikzcd}
	{...} & {H^*(W)} & {SH_{*-n}(W)} & {SH^+_{*-n}(W)} & {...} \\
	{...} & {H^*(\tilde{W})} & {SH_{*-n}(\tilde{W})} & {SH^+_{*-n}(\tilde{W})} & {...}
	\arrow[from=1-1, to=1-2]
	\arrow[from=1-2, to=1-3]
	\arrow[from=1-2, to=2-2]
	\arrow[from=1-3, to=1-4]
	\arrow[from=1-3, to=2-3]
	\arrow[from=1-4, to=1-5]
	\arrow[from=1-4, to=2-4]
	\arrow[from=2-1, to=2-2]
	\arrow[from=2-2, to=2-3]
	\arrow[from=2-3, to=2-4]
	\arrow[from=2-4, to=2-5]
\end{tikzcd}\]

\begin{rem}
Zhou\cite[Section 3.3]{zhou2021symplectic}\cite{zhou2023fillings} uses cohomological convention and defines symplectic cohomology groups. They coincide with our definition if we define our grading using $-\mu_{CZ}$ instead of $\mu_{CZ}$
   \end{rem}

Zhou \cite[Remark 4.1]{zhou2023fillings} also discusses symplectic homology twisted by $\mathbb{Z}[\pi_1]$ coefficients and makes a comparison with the symplectic homology of covering spaces. In this case, the chain group is the $\mathbb{Z}[\pi_1]$ module generated by the periodic orbits with a twisted differential by the holonomy action of the fundamental group. When $\pi_1$ is finite, symplectic homology of the universal covering space coincides with the symplectic homology twisted with $\mathbb{Z}[\pi_1]$ coefficients since the covering space is compact and $SH(\tilde{W})$ is generated by finitely many orbits. When $\pi_1$ is infinite, the groups are different since symplectic homology of the universal covering space has infinitely many generators in a fiber direction.

\begin{rem}
    Singular cohomology of $W$ with $\mathbb{Z}[\pi_1]$ local coefficients is isomorphic to the compactly supported cohomology of the universal covering space $H^c(\tilde{W})$ and not $H(\tilde{W})$. However the cup product makes $H^c(\tilde{W})$, a module over  $H(\tilde{W})$.    
\end{rem}

\begin{thm}\label{module}
    Given a Liouville domain $W$, the pair of pants product makes the symplectic homology of $W$, twisted by $\mathbb{Z}[\pi_1]$ local coefficients a module over $SH(\tilde{W})$, the symplectic homology of the universal covering $\tilde{W}$.
\end{thm}
\begin{proof}
    We notice that the chain complex of symplectic homology with $\mathbb{Z}[\pi_1]$ local coefficients is a  subcomplex of that of symplectic homology of the universal cover $SH(\tilde{W})$ since its differential coincides with that of $SH(W,\mathbb{Z}[\pi_1])$ when restricted to finitely many orbits. Now let $z$ be the pair of pants product with an element $y$ represented by an infinite sum of orbits in the fiber direction and a single orbit $x$ . The projection to $W$ of the all the possible pair of pants between $x$ and summands in $y$ gives finitely many pair of pants in $W$ who have finite lifts after fixing $x$ due to unique lifting property as we are considering contractible orbits. Therefore $z\in SC(W,\mathbb{Z}[\pi_1])$ and we have a module action $SH(\tilde{W})\otimes SH(W,\mathbb{Z}[\pi_1])\rightarrow SH(W,\mathbb{Z}[\pi_1])$.
\end{proof}

\begin{cor}\label{loccoeff}
    Given a Liouville domain $W$, $SH(W)=0$ implies $SH(W,\mathbb{Z[\pi_1]}=0.$
\end{cor}
\begin{proof}
    Let $\tilde{W}$ be the universal covering of $W$. Since the canonical map $SH(W)\rightarrow SH(\tilde{W})$ is a unital ring map, $SH(W)=0$ implies $SH(\tilde{W})=0$. Theorem \ref{module} implies that $SH(W,\mathbb{Z}[\pi_1])=0$ since the unit of $SH(\tilde{W})$ acts as the identity. 
    \end{proof}

\begin{thm}\label{hom}\ref{homotop}
    Let $M$ be a dynamically convex contact manifold such that there exists a topologically simple filling $W^\prime$ of $M$ with $SH(W^\prime)=0$. Then for any topologically simple filling $W^\prime$ of $M$, $\pi_k(W)\cong \pi_k(M)$ for $1<k<2n-1$ if $\pi_1(\partial W)\rightarrow \pi_1(W)$ is surjective.
\end{thm}

  \begin{proof}
      We remark that by \cite{zhou2020vanishing}, the vanishing of symplectic homology for any topologically simple filling implies the vanishing of symplectic homology for all topologically simple fillings. Therefore, $SH(W)=0$. Let $\tilde{W}$ denote the universal covering of $W$. When $\pi_1(\partial W)\rightarrow \pi_1(W)$ is surjective, the boundary $\partial \tilde{W}$ is connected. Examining the long exact sequence of the relative homotopy groups of the pair $(\partial \tilde{W}, \tilde{W}) $, $\rightarrow \pi_1(\tilde{W})\rightarrow\pi_1(\tilde{W},\partial\tilde{W})\rightarrow\pi_0(\partial\tilde{W})\rightarrow \pi_0(\partial W) $, we deduce that $\pi_1(\tilde{W},\partial\tilde{W})=0.$ 

      Note that since $M$ is dynamically convex, $SH_k^+(W,\mathbb{Z}[\pi_1])=0$ for $k\leq n$ since there are no generators. After using Lefshetz duality to identify the compactly supported cohomology ${H^k}^c(\tilde{W})$ and $H_{2n-k}(\tilde{W},\partial \tilde{W})$, the canonical long exact sequence with $\mathbb{Z}[\pi_1]$ coefficients is given by
\[\begin{tikzcd}[cramped]
	{...} & {H_{*+n}(\tilde{W},\partial \tilde{W})} & {SH_{*}(W,\mathbb{Z}[\pi_1])} & {SH_{*}^{+}(W,\mathbb{Z}[\pi_1])} & {H_{*+n-1}(\tilde{W},\partial \tilde{W})} & {...}
	\arrow[from=1-1, to=1-2]
	\arrow[from=1-2, to=1-3]
	\arrow[from=1-3, to=1-4]
	\arrow[from=1-4, to=1-5]
	\arrow[from=1-5, to=1-6]
\end{tikzcd}\]
$SH(W)=0$ implies $SH(W,\mathbb{Z}[\pi_1])=0$ by corollary \ref{loccoeff}. Therefore the long exact sequence implies ${H_{k}(\tilde{W},\partial \tilde{W})}=0$ for $k<2n$. Relative Hurewicz theorem implies ${\pi_{k}(\tilde{W},\partial \tilde{W})}=0$ for $k<2n$. This implies $\pi_k(\tilde{W})\cong \pi_k(\partial \tilde{W})$ for  $k<2n-1$ using the long exact sequence of relative homotopy groups. Since projection from a covering space induces isomorphisms on $\pi_k$ for $k\geq 2$, we get $\pi_k(W)\cong \pi_k(M)$ for $1<k<2n-1$.
      
      \end{proof}  

\begin{rem}
    Note that in the above proof, if $\pi_1(M)=\pi_1(W)$ is finite, we have $H_k(\tilde{W},\partial \tilde{W})\cong H_k(D^{2n},S^{2n-1})$ and $\pi_1(\tilde{W})=0$ implies by h-cobordism theorem that $\tilde{W}$ is diffeomorphic to a ball \cite{kwon2024dynamically} and $\partial\tilde{W}$ is homeomorphic to a sphere. Therefore such dynamically contact manifolds are necessarily covered by spheres which are the only examples known to the author.
\end{rem}

\section{Unit Cotangent bundles and dynamical convexity}\label{cot}
In this section, we prove theorem \ref{thm 1.11}, which states that unit cotangent bundles cannot be strongly dynamically convex. The main idea of the proof is to use Viterbo's isomorphism \ref{Viterbo} and dynamical convexity to obtain topological information on the free loop space $\mathcal{L}M$ of the base manifold $M$. More precisely, using Viterbo isomorphism, dynamical convexity would imply that  $ \iota_* :  H_k(M,\mathbb{Z}/2\mathbb{Z})\rightarrow H_k(\mathcal{L}M,\mathbb{Z}/2\mathbb{Z})$ induced by the inclusion of constant loops, is an isomorphism for $k\leq n-1$. We can then use the Serre spectral sequence for the loop-loop fibration and the path-loop fibration to find a contradiction from this fact. A key point in the proof is that the spectral sequence retains information about the  map being an inclusion.

The first result implies that we can use the Leray-Serre spectral sequence without local coefficients.
\begin{thm}\label{pi_1}
    If the unit cotangent bundle $ST^*M$ of a smooth manifold $M$ is strongly dynamically convex , then $\pi_1(M)=0.$ 
\end{thm}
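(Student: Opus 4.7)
The plan is to leverage Viterbo's isomorphism together with strong dynamical convexity to force the vanishing of $H_0(\mathcal{L}M,M;\mathbb{Z}/2\mathbb{Z})$, and then read off $\pi_1(M)=0$ from that. The crucial feature of strong dynamical convexity, in contrast with mere dynamical convexity, is that it constrains \emph{every} Reeb orbit and not only contractible ones, so it should have consequences for the non-trivial free homotopy classes of loops in $M$ as well.

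To execute this, I would first invoke Remark \ref{orb}: since the chosen contact form on $ST^*M$ has every Reeb orbit of index $\mu^\mathbb{Q}_{LCZ}\geq n+1$, and since $SH^+$ of the filling $DT^*M$ is invariant under changes of contact form on the boundary (Remark \ref{contact form}), we obtain $SH_k^+(DT^*M;\mathbb{Z}/2\mathbb{Z})=0$ for all $k\leq n$. Only the case $k=0$ is required below. Applying Viterbo's theorem in the form of Corollary \ref{pvit} then translates this into
\[
H_0(\mathcal{L}M,M;\mathbb{Z}/2\mathbb{Z})=0.
\]

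The last step is purely topological. The long exact sequence of the pair $(\mathcal{L}M,M)$ yields a surjection
\[
\iota_*\colon H_0(M;\mathbb{Z}/2\mathbb{Z})\twoheadrightarrow H_0(\mathcal{L}M;\mathbb{Z}/2\mathbb{Z}),
\]
where $\iota\colon M\hookrightarrow\mathcal{L}M$ is the inclusion of constant loops. Since $H_0(\mathcal{L}M;\mathbb{Z}/2\mathbb{Z})$ is freely generated over $\mathbb{Z}/2\mathbb{Z}$ by $\pi_0(\mathcal{L}M)$, which is naturally identified with the set of conjugacy classes of $\pi_1(M)$, and since the image of $\iota_*$ is supported on the component of contractible loops, $\pi_1(M)$ can only contain a single conjugacy class, namely the trivial one. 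Hence $\pi_1(M)=0$. The main point I would want to double-check, more a bookkeeping item than a genuine obstacle, is that Viterbo's isomorphism respects the decomposition of symplectic homology by free homotopy classes of loops, so that components of $\mathcal{L}M$ over non-trivial conjugacy classes genuinely appear in $SH^+$; this is precisely what makes the argument use strong, rather than only ordinary, dynamical convexity.
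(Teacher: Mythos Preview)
Your proposal is correct and follows essentially the same approach as the paper: both use strong dynamical convexity to force $SH^+_0(DT^*M)=0$, invoke the positive Viterbo isomorphism to conclude $H_0(\mathcal{L}M,M)=0$, and then read off $\pi_1(M)=0$ from the connected components of $\mathcal{L}M$. The paper phrases the last step as a contradiction (if $\pi_1(M)\neq 0$ then $\mathcal{L}M$ has more than one component, hence $H_0(\mathcal{L}M,M)\neq 0$), while you phrase it directly via the surjection $\iota_*$; the content is identical.
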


\begin{proof}
    Let $ST^*M $ be strongly dynamically convex and assume that $\pi_1(M)\neq 0$. Let $\hat{M}$ be a Liouville filling of $ST^*M$ such that the Liouville form restricts to a dynamically convex contact form on $ST^*M$ and $\hat{M}$ is Liouville isomorphic to $DT^*M$ (see remark \ref{contact form}). Then the invariance of positive symplectic homology for Liouville isomorphic manifolds implies that $SH^+(\hat{M})\cong SH^+(T^*M)$.Strong dynamical convexity implies  that $SH_k^+(T^*M)=0$ for $k\leq n$ (\ref{orb}). Since $M$ is not simply connected, $\mathcal{L}(M)$ has more than one connected component, so $H_0(\mathcal{L}M,M )\neq 0$.
    Now using the positive Viterbo isomorphism theorem \ref{pvit}, $SH_k^+(T^*M)\cong H_k(\mathcal{L}M,  M )$ we get that $ SH_0 ^+ (T ^* M)\neq 0 $. This would imply the existence of degree $0$ Reeb orbits and therefore a contradiction.
\end{proof}

The next lemma is the backbone for the contradiction we are going to derive. We use $\mathbb{Z}_2$ coefficients so that we can use the Viterbo isomorphism for all closed manifolds irrespective of whether they are not spin or orientable.

\begin{lem}\label{isomorphism}
    If the unit cotangent bundle $ST^*M $ of a simply connected manifold $M$ admits a non-degenerate strongly dynamically convex contact form $\lambda$, then the map $ \iota_* :  H_k(M;\mathbb{Z}_2)\rightarrow H_k(\mathcal{L}M;\mathbb{Z}_2)$ induced by the inclusion of constant loops, is an isomorphism for $k\leq n-1$.
\end{lem}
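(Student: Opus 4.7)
The plan is to translate dynamical convexity into a vanishing range for positive symplectic homology via Viterbo's isomorphism, and then extract the claimed isomorphism from the long exact sequence of the pair $(\mathcal{L}M, M)$.

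First, I would exploit the topology of $ST^*M$. When $M$ is simply connected and $n \ge 3$, the homotopy exact sequence of the fibration $S^{n-1} \hookrightarrow ST^*M \to M$ gives $\pi_1(ST^*M) = 0$, so \emph{every} Reeb orbit of $\lambda$ is contractible in $ST^*M$. Combined with non-degeneracy and dynamical convexity, this yields $\mu_{CZ}(\gamma) \ge n+1$ for every Reeb orbit $\gamma$ of $\lambda$. The case $n=2$ forces $M \cong S^2$ and can be treated separately: $\mathcal{L}S^2$ is simply connected, so the only nontrivial content of the lemma in that range is $H_1(\mathcal{L}S^2; \mathbb{Z}_2) = 0$, which holds directly.

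Next, I would pass to a filling. Let $W$ be the Liouville filling that $(ST^*M, \lambda)$ bounds; by Remark \ref{contact form}, $W$ is Liouville isomorphic to $DT^*M$, so $SH^+(W; \mathbb{Z}_2) \cong SH^+(T^*M; \mathbb{Z}_2)$. Using the paper's grading convention (Remark \ref{orb}), the degree of a Reeb orbit generator of $SC^+$ equals its Conley--Zehnder index, so the bound above forces
\begin{equation*}
SH^+_k(T^*M; \mathbb{Z}_2) = 0 \quad \text{for every } k \le n.
\end{equation*}
The positive Viterbo isomorphism (Corollary \ref{pvit}) then translates this into
\begin{equation*}
H_k(\mathcal{L}M, M; \mathbb{Z}_2) = 0 \quad \text{for every } k \le n.
\end{equation*}

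Finally, I would extract the conclusion from the long exact sequence of the pair $(\mathcal{L}M, M)$ in $\mathbb{Z}_2$-coefficients,
\begin{equation*}
H_{k+1}(\mathcal{L}M, M) \to H_k(M) \xrightarrow{\iota_*} H_k(\mathcal{L}M) \to H_k(\mathcal{L}M, M).
\end{equation*}
For $k \le n-1$ both outer terms vanish (since $k+1 \le n$ and $k \le n$), forcing $\iota_*$ to be an isomorphism in that range. The step to watch is not any deep obstruction but the grading convention linking Conley--Zehnder indices to homological degrees in $SH^+$, which determines whether the vanishing range reaches up through $k = n$ and hence whether the conclusion extends through $k = n-1$; Remark \ref{orb} fixes this in the paper's convention, so this bookkeeping is the only delicate point.
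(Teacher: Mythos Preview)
Your main argument is correct and is a close repackaging of the paper's. Both deduce $SH^+_k(T^*M;\mathbb{Z}_2)=0$ for $k\le n$ from dynamical convexity and then extract the isomorphism on $\iota_*$; the only difference is where the long exact sequence lives. The paper stays on the symplectic side: it uses Proposition~\ref{les} to get $H_{k+n}(DT^*M,ST^*M;\mathbb{Z}_2)\cong SH_k(DT^*M;\mathbb{Z}_2)$ for $k\le n-1$, then invokes the Thom isomorphism, the full Viterbo isomorphism, and the compatibility of these identifications to recognise the resulting map as $\iota_*$. You instead push the vanishing of $SH^+$ through the positive Viterbo isomorphism (Corollary~\ref{pvit}) to obtain $H_k(\mathcal{L}M,M;\mathbb{Z}_2)=0$ for $k\le n$, and then read $\iota_*$ off the ordinary long exact sequence of the pair $(\mathcal{L}M,M)$, where it appears by definition. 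Your route is marginally cleaner, since it sidesteps the compatibility check, and your explicit remark that $\pi_1(ST^*M)=0$ for $n\ge 3$ (so that dynamical convexity constrains \emph{all} Reeb orbits) plugs a point the paper leaves implicit.

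There is, however, a genuine slip in your treatment of $n=2$: $\mathcal{L}S^2$ is \emph{not} simply connected. The loop--loop fibration $\Omega S^2\to\mathcal{L}S^2\to S^2$ has a section by constant loops, so $\pi_1(\mathcal{L}S^2)\cong\pi_1(\Omega S^2)\cong\pi_2(S^2)\cong\mathbb{Z}$, whence $H_1(\mathcal{L}S^2;\mathbb{Z}_2)=\mathbb{Z}_2\ne 0$, and your ``direct'' verification of the conclusion for $M=S^2$ fails. The paper does not isolate $n=2$ either; in both proofs the step ``dynamical convexity $\Rightarrow SH^+_k=0$ for $k\le n$'' is only fully justified once one knows all Reeb orbits are contractible in $ST^*M$, which is exactly your $n\ge 3$ observation.
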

    
    \begin{proof}
    We consider a Liouville filling $(\widehat{M},\widehat{\lambda})$ of $ST^*M$ such that $\widehat{\lambda}\vert_{ST^*M}=\lambda$ and $(\widehat{M},\widehat{\lambda})$ is Liouville isomorphic to the disc cotangent bundle. Now since positive symplectic homology of dynamically convex contact manifolds is an invariant of Liouville manifolds, we get that  $SH^+(DT^*M)\cong SH^+(\widehat{M})$.      
    
    The long exact sequence from proposition \ref{les} in the case of the disc cotangent bundle $DT^*M$ is
        \begin{alignat*}{2}
\cdots & \to SH^+_{k+1}(DT^*M;\mathbb{Z}_2)\to H_{k+n}(DT^*M,ST^*M;\mathbb{Z}_2 )& \to SH_{k}(DT^*M;\mathbb{Z}_2)\to 
        SH^+_{k}(DT^*M;\mathbb{Z}_2)\to\cdots         
\end{alignat*}   
  Now, $ST^*M$ being dynamically convex implies that the groups $SH^+_k(D^*TM;\mathbb{Z}_2)$ vanish for $k\leq n$. Therefore, the exact sequence  gives $$H_{k+n}(DT^*M,ST^*M;\mathbb{Z}_2) \cong SH_{k}(DT^*M,\mathbb{Z}_2)\quad\textrm{for}\quad k\leq n-1.$$ By Thom isomorphism we have $$H_{k+n}(DT^*M,ST^*M;\mathbb{Z}_2) \cong H_k(M;\mathbb{Z}_2)$$ and Viterbo's isomorphism implies that $$SH_{k}(DT^*M,\mathbb{Z}_2) \cong  H_k(\mathcal{L}M,\mathbb{Z}_2).$$ Under these identifications, the map $H_{k+n}(DT^*M,ST^*M;\mathbb{Z}_2) \to SH_{k}(DT^*M;\mathbb{Z}_2)$ is the map induced by inclusion of constant loops
  $\iota_* :  H_k(M;\mathbb{Z}_2)\rightarrow H_k(\mathcal{L}M;\mathbb{Z}_2)$ which is an isomorphism in the case $k\leq n-1$
  \end{proof}

To derive the contradiction we work with the path-loop fibration and loop-loop fibration to compute the homology groups $H_k(\Omega M;\mathbb{Z}_2)$, $ H_k(\mathcal{L}M;\mathbb{Z}_2)$ and $H_k(M;\mathbb{Z}_2)$ using the above lemma. We recall the definitions of these fibrations, the Leray-Serre exact sequence in homology and several of its properties which we shall use.

The path space $\mathcal{P}_xX$ of a manifold $X$ is given by  $\mathcal{P}_xX=\{\gamma:[0,1]\rightarrow X, \gamma(0)=x\}  $.

\begin{thm}\label{path}
     The evaluation map $p:\mathcal{P}_xX\rightarrow X$ given by $p(\gamma)=\gamma(1)$ is a Serre fibration with fibre the based loop space $\Omega_xX=\{\gamma:[0,1]\rightarrow X, \gamma(0)=\gamma(1)=x\}$
\end{thm}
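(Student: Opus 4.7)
The plan is to verify the defining property of a Serre fibration, namely the homotopy lifting property for disks (equivalently for CW pairs of the form $(D^n, D^n \times \{0\})$), and then identify the fiber directly from the definition. The path space $\mathcal{P}_xX$ will be equipped with the compact-open topology, so that the evaluation map $p(\gamma) = \gamma(1)$ is continuous.

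For the fiber, observe that by definition $p^{-1}(x) = \{\gamma : [0,1] \to X \mid \gamma(0) = x, \gamma(1) = x\} = \Omega_x X$, so the fiber identification is immediate.

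The main technical step is establishing the homotopy lifting property. Suppose we are given a map $f : D^n \to \mathcal{P}_xX$ and a homotopy $H : D^n \times [0,1] \to X$ with $H(d, 0) = p(f(d)) = f(d)(1)$. I need to produce a lift $\widetilde{H} : D^n \times [0,1] \to \mathcal{P}_xX$ satisfying $\widetilde{H}(d, 0) = f(d)$ and $p \circ \widetilde{H} = H$. The idea is concatenation: for each $(d, t)$, I want a path from $x$ to $H(d, t)$, which I obtain by first traversing $f(d)$ from $x$ to $H(d, 0)$ and then following the track of $H(d, \cdot)$ from time $0$ to time $t$. Concretely, I define
\[
\widetilde{H}(d, t)(s) =
\begin{cases}
f(d)\bigl(s(1+t)\bigr) & \text{if } 0 \leq s \leq \frac{1}{1+t}, \\
H\bigl(d,\, s(1+t) - 1\bigr) & \text{if } \frac{1}{1+t} \leq s \leq 1.
\end{cases}
\]
One then checks the gluing is consistent at $s = 1/(1+t)$ since both expressions yield $H(d, 0)$, that $\widetilde{H}(d, t)(0) = x$, that $\widetilde{H}(d, t)(1) = H(d, t)$, and that $\widetilde{H}(d, 0) = f(d)$.

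The main obstacle (rather, the only nontrivial verification) is continuity of $\widetilde{H}$ viewed as a map $D^n \times [0,1] \to \mathcal{P}_xX$, equivalently, joint continuity of the adjoint $D^n \times [0,1] \times [0,1] \to X$. This follows from the continuity of the piecewise definition above together with the exponential law for mapping spaces (using that $[0,1]$ is locally compact Hausdorff, so that the compact-open topology makes the adjunction a homeomorphism). Since the argument used no properties of $D^n$ beyond being a topological space, this actually establishes the stronger Hurewicz fibration property, and in particular the Serre property. The fiber was identified in the first paragraph, completing the proof.
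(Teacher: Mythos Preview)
Your proof is correct and is the standard argument for this classical fact: verify the homotopy lifting property directly by reparametrised concatenation of the given path with the track of the homotopy, and invoke the exponential law to confirm continuity. The paper itself does not supply a proof of this statement; it is quoted as a well-known background result (originating with Serre) and used as input for the spectral sequence computations that follow. So there is no ``paper's own proof'' to compare against---you have simply filled in a detail the paper takes for granted, and done so correctly. Your observation that the argument in fact yields a Hurewicz fibration is also accurate and worth noting.
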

\begin{thm}\label{loop}
    The evaluation map $p:\mathcal{L}X\rightarrow X$ given by $p(\gamma)=\gamma(0)$ is a Serre fibration with fibre $\Omega_x(X)$. Furthermore $p\circ i=Id$ where $i:X\rightarrow \mathcal{L}X$ is the inclusion of constant loops.
\end{thm}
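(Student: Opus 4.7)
The plan is to handle the two assertions of the theorem separately. For the fibre identification, by definition $p^{-1}(x) = \{\gamma \in \mathcal{L}X : \gamma(0) = x\} = \Omega_x X$, and the relation $p \circ i = \mathrm{Id}_X$ is immediate: for $x \in X$, $i(x)$ is the constant loop at $x$, so $p(i(x)) = i(x)(0) = x$. The only real content is therefore the Serre fibration property of $p$.

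The cleanest route I would take is to exhibit $p$ as a pullback of a fibration of the type proved in Theorem \ref{path}. Concretely, consider the path space $\mathcal{P}X = \{\gamma : [0,1] \to X\}$ together with the two-ended evaluation $(\mathrm{ev}_0, \mathrm{ev}_1) : \mathcal{P}X \to X \times X$; by an argument essentially identical to Theorem \ref{path} this is a Serre fibration with fibre $\Omega_x X$ over a point $(x,x)$ of the diagonal. Then $\mathcal{L}X$ is canonically identified with the pullback of this fibration along the diagonal $\Delta : X \to X \times X$, since a free loop is exactly a path whose two endpoints coincide. Under this identification, the evaluation $p : \mathcal{L}X \to X$ corresponds to the projection from the pullback onto $X$, and since Serre fibrations are stable under pullback, $p$ is itself a Serre fibration with the claimed fibre $\Omega_x X$.

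Alternatively, one can verify the homotopy lifting property directly. A lifting problem for $p$ over a pair $(Y \times I, Y \times \{0\})$ transposes, via the adjunction between $\mathcal{L}X = \mathrm{Map}(S^1, X)$ and the product with $S^1$, into the problem of extending a continuous map from the subspace $(Y \times I \times \{*\}) \cup (Y \times \{0\} \times S^1)$ to all of $Y \times I \times S^1$, where $* \in S^1$ is the basepoint used in defining $p$. This is the homotopy extension property for the cofibration $\{*\} \hookrightarrow S^1$, crossed with $Y$. I do not anticipate any real obstacle: one only needs to work with a topological model of $\mathcal{L}X$ for which the adjunctions are well behaved (continuous maps with the compact-open topology). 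The statement is a standard fact, and I am recording it here precisely in the form needed to feed, together with Theorem \ref{path}, into the Serre spectral sequence comparison that will run in the proof of Theorem \ref{contra}.
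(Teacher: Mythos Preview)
Your argument is correct. Both routes you outline are standard and valid: the pullback of the two-ended evaluation $\mathcal{P}X \to X \times X$ along the diagonal is the cleanest conceptual proof, and the direct verification via the exponential adjunction and the cofibration $\{*\} \hookrightarrow S^1$ works equally well (with the usual caveat, which you note, that one uses the compact-open topology so the adjunction is a homeomorphism).

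The paper, however, does not supply a proof of this statement at all: Theorem~\ref{loop}, like Theorem~\ref{path}, is recorded as a standard background fact from algebraic topology and is simply stated without argument or citation. So there is no ``paper's own proof'' to compare against. Your write-up is more than the paper provides; if anything, the only thing to add for completeness would be a reference (e.g.\ Spanier or Hatcher) since the result is classical.
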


\begin{rem}
    The fibration in \ref{path} is called the \textbf{path-loop fibration} and the fibration in \ref{loop} is called the \textbf{loop-loop fibration}
\end{rem}
\begin{thm}[\cite{serre1951homologie}\cite{leray1950anneau}]\textbf{Leray-Serre Spectral Sequence} Let $F\xrightarrow {i}E\xrightarrow{p}B $ be a fibration with $B$ simply connected. Then, there exists a collection of differential bi-graded abelian groups $\{E_{p,q}^r, d_r\}$ such that 
\begin{enumerate}
    \item $d_r: E_{p,q}^r\rightarrow E_{p-r,q+r-1}$ satisfies $d_r\circ d_r=0$ and $E_{p,q}^{r+1}=ker d_r/Im d_r$
    \item $E^r_{p,q} $ converges to $ H_{p+q}(E)$ 
\end{enumerate}

\end{thm}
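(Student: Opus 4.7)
The result is classical, and my plan is to reproduce Serre's original construction via a filtered chain complex, since the statement as given asks only for the bigraded structure, differentials, and convergence. First, I would reduce to the case where $B$ is a CW complex. This is harmless because one may replace $B$ by a CW model (for instance, its singular realization) without altering singular homology of the total space after pulling back the fibration along the weak equivalence. With $B$ a CW complex, let $B^{(p)}$ denote its $p$-skeleton and put $E^{(p)} := p^{-1}(B^{(p)})$. These nested subspaces give a filtration of $E$, and hence a filtration $F_p C_*(E)$ of the singular chain complex.

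Next, the standard machinery of a filtered chain complex produces a spectral sequence with
\[
E^1_{p,q} \;=\; H_{p+q}\bigl(E^{(p)},\,E^{(p-1)}\bigr),
\]
equipped with differentials $d_r$ of bidegree $(-r,\,r-1)$ satisfying $d_r\circ d_r = 0$ and $E^{r+1}_{p,q} = \ker d_r/\operatorname{im} d_r$. This is essentially formal from the construction of the spectral sequence associated to a filtration, and it gives part (i) of the statement immediately. Convergence to $H_{p+q}(E)$ then follows because the filtration is bounded in each total degree (chains of low total degree cannot map into cells of arbitrarily high dimension), putting us in the standard bounded/convergent situation. This takes care of part (ii).

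The actual content of the proof lies in identifying $E^1$ and pushing to $E^2$. For each open $p$-cell $e_\alpha$ of $B$, the restriction $p^{-1}(e_\alpha)\to e_\alpha$ is trivializable because $e_\alpha$ is contractible and $p$ is a Serre fibration, so up to homotopy equivalence $p^{-1}(e_\alpha)\simeq e_\alpha\times F$. Applying excision cell-by-cell together with a Künneth computation gives
\[
E^1_{p,q} \;\cong\; C_p^{\mathrm{cell}}\bigl(B;\, \mathcal{H}_q(F)\bigr),
\]
where $\mathcal{H}_q(F)$ is the local coefficient system on $B$ whose stalk at $b$ is $H_q(p^{-1}(b))$. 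Since $B$ is simply connected, the monodromy representation of $\pi_1(B)$ on $H_q(F)$ is trivial, so $\mathcal{H}_q(F)$ is the constant system with value $H_q(F)$. Taking homology of $d_1$ then recovers the usual $E^2_{p,q} = H_p(B;H_q(F))$, but the theorem as stated does not demand this identification, so one can stop at the convergence assertion.

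The step I expect to be the main obstacle is the identification of $E^1$: specifically, showing that the trivializations over overlapping cells patch together so that the attaching maps in the cellular chain complex act on the fiber homology through the monodromy representation. This is a standard but delicate lifting-of-homotopies argument for Serre fibrations, and it is where the simple connectivity of $B$ really does the work by killing the monodromy and letting us avoid twisted coefficients entirely. Everything else is formal manipulation of the spectral sequence of a filtered complex, which I would simply cite from a standard reference such as McCleary or Hatcher.
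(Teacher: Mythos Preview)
The paper does not prove this statement; it is quoted as a classical theorem with citations to \cite{serre1951homologie} and \cite{leray1950anneau}, and is used as a black box in the subsequent arguments. Your sketch is a correct outline of the standard construction via the skeletal filtration of a CW base, and nothing more is needed here since the paper treats the result as background.
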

\begin{cor} [\cite{mccleary2001user}Example $5$D]
\textbf{Serre exact sequence}\label{Serre ex}: Let $F\xrightarrow {i}E\xrightarrow{p}B $ be a fibration with $B$ simply connected. Suppose $H_i(B)=0$ for $0<i<p$ and $H_j{(F)=0}$ for $0<j<q$. Then there exist an exact sequence   
     
     $ \vspace{0.2cm}
      \hspace{2cm} H_{p+q-1}(F)\rightarrow H_{p+q-1}(E)\rightarrow H_{p+q-1}(B)\rightarrow H_{p+q-2}(F)\rightarrow \cdots H_1(E)\rightarrow 0 $
     \end{cor}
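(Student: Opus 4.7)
The plan is to derive this sequence directly from the Leray--Serre spectral sequence by exploiting the very sparse shape of its $E^2$-page. First I would set up the spectral sequence with $E^2_{s,t} = H_s(B; H_t(F))$ converging to $H_{s+t}(E)$; simple-connectedness of $B$ trivializes the local coefficient system, so one may forget twisted coefficients. The two vanishing hypotheses then force $E^2_{s,t} = 0$ unless $s \in \{0\} \cup [p,\infty)$ and $t \in \{0\} \cup [q,\infty)$. In particular, inside the total-degree strip $s+t \leq p+q-1$ the only potentially nonzero entries lie along the column $s=0$, where $E^2_{0,t} = H_t(F)$, and along the row $t=0$, where $E^2_{s,0} = H_s(B)$.

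Next I would determine which differentials can be nonzero. Since $d_r$ sends $(s,t)$ to $(s-r,t+r-1)$, differentials leaving the column $s=0$ go to negative $s$ and vanish, while differentials entering the row $t=0$ originate at negative $t$ and vanish. Within $s+t \leq p+q-1$, the shape of the page forces the only nonzero differential leaving $E^r_{s,0}$ to be the transgression $\tau := d_s \colon E^s_{s,0} \cong H_s(B) \to E^s_{0,s-1} \cong H_{s-1}(F)$, and, symmetrically, the only differential hitting $E^r_{0,t}$ for $t \leq p+q-2$ is this same transgression emanating from $E^{t+1}_{t+1,0}$. Consequently, in the relevant range,
\[
E^\infty_{s,0} = \ker\!\bigl(\tau \colon H_s(B) \to H_{s-1}(F)\bigr), \qquad E^\infty_{0,t} = H_t(F)\big/\tau\bigl(H_{t+1}(B)\bigr).
\]

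Third, I would read off the exact sequence from the induced filtration on $H_n(E)$. For $n \leq p+q-1$, the only nonzero pieces of the associated graded of $H_n(E)$ are $E^\infty_{0,n}$ and $E^\infty_{n,0}$, so the filtration collapses to a short exact sequence
\[
0 \to E^\infty_{0,n} \to H_n(E) \to E^\infty_{n,0} \to 0,
\]
in which the outer arrows are the edge homomorphisms, identified respectively with $i_*$ and $p_*$. Splicing these short exact sequences together along $\tau$ as the connecting homomorphism produces the claimed long exact sequence, and it terminates at $H_1(E) \to 0$ because in total degree zero the map $H_0(B) \to H_{-1}(F) = 0$ is trivially surjective. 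The leftmost arrow $H_{p+q-1}(F) \to H_{p+q-1}(E)$ is permitted a kernel coming from the single further differential $d_p \colon E^p_{p,q} \to E^p_{0,p+q-1}$, which explains why the sequence as stated is not prefixed by a zero.

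The only point that requires genuine care, and hence the main obstacle, is the identification of the edge homomorphisms of the spectral sequence with the geometric maps $i_*$ and $p_*$, together with the identification of the differential $d_s \colon E^s_{s,0} \to E^s_{0,s-1}$ with the transgression. Both are standard but nontrivial properties of the Leray--Serre construction, and they are precisely the content of Example 5D in McCleary cited in the statement.
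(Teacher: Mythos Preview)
The paper does not give a proof of this corollary; it simply cites it as Example~5D in McCleary \cite{mccleary2001user} and uses it as a black box in the arguments of Section~\ref{cot}. Your proposal is the standard derivation of the Serre exact sequence from the sparse $E^2$-page, and it is correct, including your observation that the leftmost map $H_{p+q-1}(F)\to H_{p+q-1}(E)$ need not be injective because of the extra differential from $E^p_{p,q}$. Since the paper offers no argument of its own here, there is nothing to compare: your write-up is essentially the proof one would find in the cited reference.
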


\begin{prop}\label{prop:HkM}
    Let $M$ be a simply connected manifold and let $ \iota^* :  H_k(M;\mathbb{Z}_2)\rightarrow H_k(\mathcal{L}M;\mathbb{Z}_2)$ be the map induced by inclusion of constant loops. If $\iota_*$  is an isomorphism for $k \leq n-1$, then
    $$H_k(M;\mathbb{Z}_2)=0\quad\textrm{for}\quad1\leq k\leq n-1,$$ $$H_k(\mathcal{L}M;\mathbb{Z}_2)=0\quad\textrm{for}\quad1\leq k\leq n-1,$$  and,
    $$H_k(\Omega M;\mathbb{Z}_2)=\begin{cases}
        \mathbb{Z}_2 & \textrm{for } k=n-1\\
        0 & \textrm{for } 1 \leq k\leq n-2.
    \end{cases}$$
\end{prop}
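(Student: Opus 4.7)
The plan is to run two Serre spectral sequences in parallel: the loop-loop fibration $\Omega M\to\mathcal{L}M\xrightarrow{p}M$ and the path-loop fibration $\Omega M\to\mathcal{P}M\to M$. Since $M$ is simply connected and $\mathbb{Z}_2$ is a field, the $E^2$-page of each is $H_p(M;\mathbb{Z}_2)\otimes H_q(\Omega M;\mathbb{Z}_2)$. The path-loop sequence converges to $H_*(\mathrm{pt})$, which vanishes in positive degree, while the loop-loop sequence converges to $H_*(\mathcal{L}M;\mathbb{Z}_2)$ and carries the extra information that $\iota$ is a section of $p$; this section provides the main leverage.

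First I would exploit the section property in the loop-loop sequence. The relation $p_*\circ\iota_*=\mathrm{id}$ forces the edge homomorphism $H_k(\mathcal{L}M)\to E^\infty_{k,0}\hookrightarrow H_k(M;\mathbb{Z}_2)$ to be surjective, so $E^\infty_{p,0}=E^2_{p,0}=H_p(M;\mathbb{Z}_2)$ and every differential out of the bottom row vanishes. The hypothesis that $\iota_*$ is an isomorphism for $k\leq n-1$ translates to $\ker p_*=0$ in those degrees, i.e.\ $E^\infty_{p,q}=0$ whenever $p+q\leq n-1$ and $q\geq 1$. An induction on $q$ then yields $H_q(\Omega M;\mathbb{Z}_2)=0$ for $1\leq q\leq n-2$: at stage $q$, each differential into $E^r_{0,q}$ originating from $(r,q-r+1)$ with $2\leq r\leq q$ vanishes because its source carries the factor $H_{q-r+1}(\Omega M)=0$ by the inductive hypothesis, while the one from $(q+1,0)$ vanishes by the section, so $E^\infty_{0,q}=E^2_{0,q}=H_q(\Omega M;\mathbb{Z}_2)$ is forced to be zero.

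Next I would feed this into the path-loop spectral sequence, where rows $1\leq q\leq n-2$ now vanish. For $1\leq k\leq n-1$ every outgoing differential from $(k,0)$ either lands in one of these trivial rows (when $r\leq n-1$) or off-page (when $r\geq n$), and incoming differentials are always off-page. Thus $E^\infty_{k,0}=H_k(M;\mathbb{Z}_2)$, which convergence to zero forces to vanish; so $H_k(M;\mathbb{Z}_2)=0$ for $1\leq k\leq n-1$, and the hypothesis on $\iota_*$ transports this to $H_k(\mathcal{L}M;\mathbb{Z}_2)=0$ in the same range.

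For the final bullet I will use that $M$ is a closed $n$-manifold, as in the motivating setup of the unit cotangent bundle, so $\mathbb{Z}_2$ Poincaré duality gives $H_n(M;\mathbb{Z}_2)=\mathbb{Z}_2$. In the path-loop page the only surviving entries in total degrees $n-1$ and $n$ are then $(n,0)$, $(0,n-1)$ and $(0,n)$, and the only nonzero differential that can touch $(n,0)$ or $(0,n-1)$ is the transgression $d_n\colon E^n_{n,0}\to E^n_{0,n-1}$; convergence to zero forces $d_n$ to be an isomorphism, giving $H_{n-1}(\Omega M;\mathbb{Z}_2)\cong H_n(M;\mathbb{Z}_2)=\mathbb{Z}_2$. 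I expect the main subtlety to be verifying once and for all that the section truly kills every outgoing differential from the bottom row of the loop-loop sequence, since that is what supplies the ``$r=q+1$'' input at each inductive step; once that is in hand, the rest is careful bookkeeping of which differentials vanish at which stage.
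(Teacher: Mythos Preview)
Your argument is correct and uses the same two ingredients as the paper---the section property of the loop--loop fibration (killing all differentials off the bottom row) together with the path--loop fibration---but you organise the induction differently. The paper runs a \emph{joint} induction: at stage $p$ it uses the already-established vanishing of $H_i(M)$ for $i<p$ to clear columns $1,\dots,p-1$ in the loop--loop page, concludes $H_{p-1}(\Omega M)=0$, and then feeds this into the path--loop Serre exact sequence to get $H_p(M)=0$. You instead \emph{decouple} the two sequences: a single induction in the loop--loop sequence, using only the vanishing of lower $H_j(\Omega M)$ to clear the relevant rows, already yields $H_q(\Omega M)=0$ for $1\le q\le n-2$; the path--loop sequence is then invoked once at the end to obtain $H_k(M)=0$. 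Your route is slightly cleaner since it avoids interleaving the two spectral sequences. You also make explicit the hidden hypothesis that $M$ is a closed $n$-manifold (needed to have $H_n(M;\mathbb{Z}_2)=\mathbb{Z}_2$ and hence $H_{n-1}(\Omega M;\mathbb{Z}_2)=\mathbb{Z}_2$ via the transgression), which the paper's proof uses without comment.
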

\begin{proof}
 Note that $\pi_1(M)=0$ implies $H_1(M)=0$ and therefore $H_1(\mathcal{L} M)=0$. Let $\pi:\mathcal{L}M\rightarrow M $ be the fibration map, then $\pi\circ\iota=Id \implies  \pi_*\circ\iota_*=Id$. Therefore, $\iota_* $ being an isomorphism for $ k\leq n-1$  implies that $\pi_*$ is an isomorphism for $k\leq n-1$.
 
 Throughout the proof, we work with $\mathbb{Z}_2$ coefficients which we omit from the notations for the sake of brevity. This choice of coefficients also resolves any problems regarding extensions in the limiting page.
 We use the Serre spectral sequences for the path-loop fibration and the loop-loop fibration as well as an induction to prove our claim.

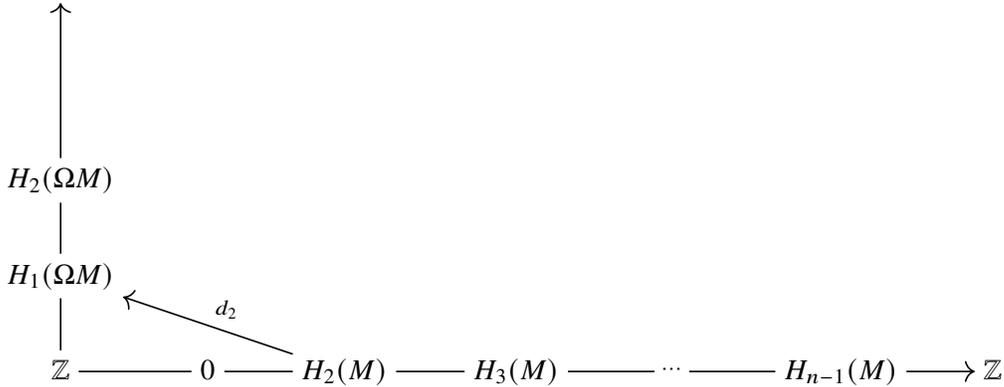
\begin{figure}[!htbp]
    \centering

\[\begin{tikzcd}[cramped]
	{} \\
	\\
	\\
	{H_2({\Omega M})} \\
	{H_1(\Omega M)} \\
	{\mathbb{Z}} & 0 & {H_2(M)} & {H_3(M)} &&& {H_{n-1}(M)} & {\mathbb{Z}} \\
	&&&&&&&& {}
	\arrow[from=4-1, to=1-1]
	\arrow[no head, from=5-1, to=4-1]
	\arrow[no head, from=6-1, to=5-1]
	\arrow[no head, from=6-1, to=6-2]
	\arrow[no head, from=6-2, to=6-3]
	\arrow["{d_2}"', from=6-3, to=5-1]
	\arrow[no head, from=6-3, to=6-4]
	\arrow["\cdots"{description}, no head, from=6-4, to=6-7]
	\arrow[from=6-7, to=6-8]
\end{tikzcd}\] 

\caption{Page $2$ of the spectral sequence for the loop-loop fibration}
    
\end{figure}

Since the edge homomorphism $H_2(\mathcal{L}M)\rightarrow E^\infty_{(2,0)}\rightarrow \cdots \rightarrow E^2_{(2,0)}=H_2(M)$ in the spectral sequence for the loop-loop fibration coincides with the fibration map $\pi_*$ \cite[Proposition 64.2]{miller2021lectures} , we have that $E^2_{(2,0)}=H_2(M)$ persists till the limiting page. This implies that $d_2=0$. The fact that the differential $d_2$ vanishes forces $E^2_{(0,1)}=H_1(\Omega M )$ to persist until the last page. The isomorphism $H_2(\mathcal{L}M)=H_2(M)$ implies that in the limiting page $H_2(M)$ is the sole contributor to $H_2(\mathcal{L}M)$. Therefore, $H_1(\Omega M )=0$.

We have the Serre exact sequence \ref{Serre ex} in low degrees for the path-loop fibration,
$$H_2(\mathcal{P}M)\rightarrow H_2(M)\rightarrow H_1(\Omega M)\rightarrow H_1(\mathcal{P}M)\rightarrow H_1(M)\rightarrow 0$$

 The contractibility of the path space $\mathcal{P}M$ implies $H_2(M)\cong H_1(\Omega M)$ and therefore $$H_2(M)= H_2(\mathcal{L}M)=0$$

Now, assume by induction, that for $ 0<i<p<n$,  $H_i(M)=0$ and $H_{i-1}(\Omega M)=0$. 
The edge homomorphism $H_p(\mathcal{L}M)\rightarrow E^\infty_{(p,0)}\rightarrow \cdots \rightarrow E^2_{(p,0)}=H_p(M)$ coincides with the fibration map $\pi_*$ and this implies that $ E^2_{(p,0)}=H_p(M)$ would remain until the limiting page. Therefore $d_p=0$. This forces $H_{p-1}(\Omega M)$ to persist until the limiting page.

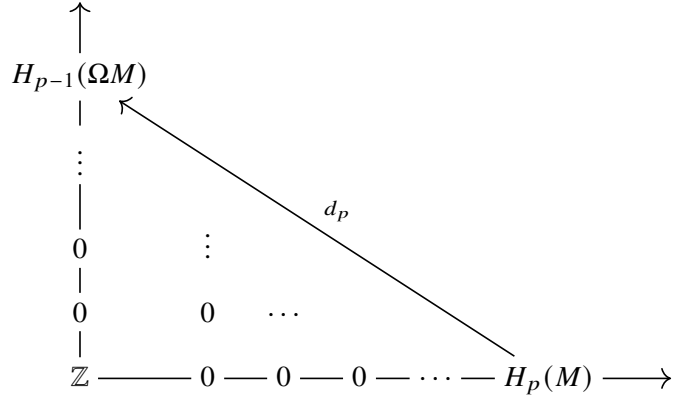
\begin{figure}[!hbtp]
    \centering

\[\begin{tikzcd}[cramped,sep=small]
	{} \\
	\\
	{H_{p-1}(\Omega M)} \\
	\vdots \\
	0 & \vdots \\
	0 & 0 & \cdots \\
	{\mathbb{Z}} & 0 & 0 & 0 & \cdots & {H_p(M)} && 
	\arrow[from=3-1, to=1-1]
	\arrow[no head, from=4-1, to=3-1]
	\arrow[no head, from=5-1, to=4-1]
	\arrow[no head, from=6-1, to=5-1]
	\arrow[no head, from=7-1, to=6-1]
	\arrow[no head, from=7-1, to=7-2]
	\arrow[no head, from=7-2, to=7-3]
	\arrow[no head, from=7-3, to=7-4]
	\arrow[no head, from=7-4, to=7-5]
	\arrow[no head, from=7-5, to=7-6]
	\arrow["{d_p}"', from=7-6, to=3-1]
	\arrow[from=7-6, to=7-8]
\end{tikzcd}\]

\caption{$p$th page of the spectral sequence for the loop-loop fibration}
    
\end{figure}

The fact that $H_{p-1}(\Omega M)$ persists and  that $H_{p-1}(\mathcal{L}M)=\bigoplus E^\infty_{(i,p-1-i)}$ implies $H_{p-1}(\Omega M)=0.$

Using the Serre exact sequence for path-loop fibration, we have 

$ \hspace{0.5cm} \rightarrow H_{2p-2}(\Omega M)\rightarrow \cdots\rightarrow H_{p}(\Omega M)\rightarrow H_{p}(\mathcal{P}M)\rightarrow H_{p}(M)\rightarrow H_{p-1}(\Omega M)\rightarrow 0\rightarrow\cdots $

The contractibility of $\mathcal{P}M$ implies that $H_p(M)\cong H_{p-1}(\Omega M)=0$.
Therefore $H_p(\mathcal{L}M)\cong H_p(M)=0$.

Finally, we have to show $H_{n-1}(\Omega M)=\mathbb{Z}_2$ but this follows directly from the Serre exact sequence of the path-loop fibration and the fact that the path space is contractible.
\end{proof}

  \begin{rem}
      To prove proposition \ref{prop:HkM}, we used the extra information that the isomorphism in \ref{isomorphism} is induced by the inclusion of constant loops.
    \end{rem}

\begin{thm}\label{contradiction}
    The unit cotangent bundle $ST^*M$ of a simply connected closed manifold $M$ (not necessarily orientable) with standard contact structure cannot be strongly dynamically convex.
\end{thm}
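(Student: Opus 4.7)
The plan is to use Lemma~\ref{isomorphism} and Proposition~\ref{prop:HkM} to pin down the low-degree $\mathbb{Z}_2$-homology of $M$, $\mathcal{L}M$ and $\Omega M$, and then to extract a contradiction from the Leray--Serre spectral sequence of the loop-loop fibration in total degrees $n-1$ and $n$.

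First, I would reduce to a non-degenerate dynamically convex form. Because $\mu^{\mathbb{Q}}_{LCZ}$ is by definition the infimum of Conley--Zehnder indices over nearby non-degenerate perturbations, a $C^\infty$-small perturbation of a dynamically convex $\lambda$ can be chosen non-degenerate while preserving $\mu^{\mathbb{Q}}_{CZ}(\gamma)\geq n+1$ for every Reeb orbit below any prescribed action threshold, which is what the long exact sequence inside Lemma~\ref{isomorphism} requires. That lemma then produces an isomorphism $\iota_*\colon H_k(M;\mathbb{Z}_2)\xrightarrow{\cong} H_k(\mathcal{L}M;\mathbb{Z}_2)$ for $k\leq n-1$. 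Feeding this into Proposition~\ref{prop:HkM} yields $H_k(M;\mathbb{Z}_2)=H_k(\mathcal{L}M;\mathbb{Z}_2)=0$ for $1\leq k\leq n-1$, together with $H_{n-1}(\Omega M;\mathbb{Z}_2)=\mathbb{Z}_2$. Since $M$ is a closed $n$-manifold, Poincar\'e duality with $\mathbb{Z}/2$-coefficients (which holds without an orientability hypothesis) gives in addition $H_n(M;\mathbb{Z}_2)=\mathbb{Z}_2$, so that $M$ is a $\mathbb{Z}/2$-homology sphere.

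The contradiction then comes from the Serre spectral sequence with $\mathbb{Z}_2$-coefficients for the loop-loop fibration $\Omega M\hookrightarrow\mathcal{L}M\xrightarrow{\pi} M$, where no local coefficient issue arises because $M$ is simply connected. K\"unneth gives $E^2_{p,q}=H_p(M;\mathbb{Z}_2)\otimes H_q(\Omega M;\mathbb{Z}_2)$, and the vanishing statements above force every entry in total degree $n-1$ or $n$ to vanish except $E^2_{0,n-1}=\mathbb{Z}_2$ and $E^2_{n,0}=\mathbb{Z}_2$, so the only potential differential linking them is $d_n\colon E^n_{n,0}\to E^n_{0,n-1}$. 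Because the inclusion of constant loops $\iota\colon M\hookrightarrow\mathcal{L}M$ is a section of $\pi$, the map $\pi_*\colon H_n(\mathcal{L}M;\mathbb{Z}_2)\to H_n(M;\mathbb{Z}_2)$ is surjective; since this map coincides with the edge homomorphism $H_n(\mathcal{L}M;\mathbb{Z}_2)\twoheadrightarrow E^\infty_{n,0}\hookrightarrow E^2_{n,0}$, it forces $E^\infty_{n,0}=\mathbb{Z}_2$ and hence $d_n=0$. But then $E^\infty_{0,n-1}=\mathbb{Z}_2$ survives to the limit page and is the only contributor to $H_{n-1}(\mathcal{L}M;\mathbb{Z}_2)$, in direct contradiction with $H_{n-1}(\mathcal{L}M;\mathbb{Z}_2)=0$ from Proposition~\ref{prop:HkM}.

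The main obstacle I anticipate is the non-degeneracy reduction at the start: Lemma~\ref{isomorphism} is phrased for non-degenerate forms, while the theorem assumes only dynamical convexity. Making this perturbation argument rigorous requires combining the infimum definition of $\mu^{\mathbb{Q}}_{LCZ}$ with the invariance of positive symplectic homology under changes of contact form on the boundary recorded in Remark~\ref{contact form}, and checking that dynamical convexity persists on the relevant low-action window. A secondary subtlety is that the vanishing $SH^+_k=0$ for $k\leq n$ used inside Lemma~\ref{isomorphism} relies on all Reeb orbits of $ST^*M$ being contractible, which is automatic once $\dim M\geq 3$ and is the range in which the spectral-sequence contradiction above has content.
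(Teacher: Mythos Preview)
Your argument is correct and follows the same route as the paper: both invoke Lemma~\ref{isomorphism} and Proposition~\ref{prop:HkM} to obtain $H_{n-1}(\Omega M;\mathbb{Z}_2)=\mathbb{Z}_2$ and $H_{n-1}(\mathcal{L}M;\mathbb{Z}_2)=0$, and both use the section $\iota$ of the loop-loop fibration to force the transgression to vanish and derive the contradiction. The only packaging difference is that the paper appeals directly to the Serre exact sequence of Corollary~\ref{Serre ex} (so $p_*$ surjective $\Rightarrow \delta=0 \Rightarrow i_*$ injective), whereas you unravel the same mechanism at the spectral-sequence level via the edge homomorphism; in particular your Poincar\'e-duality computation of $H_n(M;\mathbb{Z}_2)$ and your claim about all total-degree-$n$ entries vanishing (note $E^2_{0,n}=H_n(\Omega M)$ need not vanish) are not actually needed, since the edge-homomorphism argument already gives $E^\infty_{n,0}=E^2_{n,0}$ without knowing either.
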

\begin{proof}
We examine the Serre exact sequence \ref{Serre ex} of the loop-loop fibration
\begin{equation}
    H_n(\Omega M)\rightarrow H_n(\mathcal{L}M)\xrightarrow{p_*} H_n(M)\xrightarrow{\delta} H_{n-1}(\Omega M)\xrightarrow{i_*} H_{n-1}(\mathcal{L}M)\rightarrow 0
\end{equation}
Note that $p\circ i_M =Id$ (where ${i}_M$ is inclusion of constant loops) implies $p_*\circ {i_M}_*=id$ and therefore $p_*$ is surjective. By exactness, we have $\delta=0$ and hence $i_*: H_{n-1}({\Omega M})\rightarrow H_{n-1}(\mathcal{L}M)$  is injective. Then, $H_{n-1}(\Omega M)=\mathbb{Z}_2$ and $H_{n-1}(\mathcal{L}M)=0$, gives us a contradiction.  
\end{proof}

\begin{cor}[Theorem \ref{thm 1.11}]\label{sdc}
    The unit cotangent bundle $ST^*M$ of a closed manifold $M$ cannot be strongly dynamically convex.
    
\end{cor}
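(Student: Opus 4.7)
The plan is to derive this corollary immediately from the two previous results, namely Theorem \ref{pi_1} (which rules out fundamental group of the base under strong dynamical convexity) and Theorem \ref{contradiction} (which rules out dynamical convexity of $ST^*M$ over a simply connected base). The argument is a proof by contradiction: suppose $ST^*M$ admits a strongly dynamically convex contact form $\alpha$.

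First, I would invoke Theorem \ref{pi_1} to conclude that $\pi_1(M) = 0$. Recall that the obstruction there comes from the fact that strong dynamical convexity forces $SH^+_k(T^*M;\mathbb{Z}_2) = 0$ for $k \leq n$, which via the positive Viterbo isomorphism $SH^+_0(T^*M;\mathbb{Z}_2) \cong H_0(\mathcal{L}M, M;\mathbb{Z}_2)$ forces $\mathcal{L}M$ to be connected relative to $M$, i.e. $M$ to be simply connected.

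Next, I would observe that by the very definition of strong dynamical convexity, the inequality $\mu^{\mathbb{Q}}_{LCZ}(\gamma) \geq n+1$ holds for \emph{all} Reeb orbits, and in particular for all contractible ones, so $\alpha$ is in fact dynamically convex. (Also, as noted in the remark following Definition \ref{sdc}, on a simply connected manifold the two notions coincide, so no extra work is needed.) Therefore $ST^*M$ is a dynamically convex contact manifold over a simply connected closed base $M$.

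Finally, I would apply Theorem \ref{contradiction}, which asserts precisely that no such dynamically convex structure can exist on $ST^*M$ when $M$ is simply connected and closed, yielding the desired contradiction. The whole argument is essentially bookkeeping between the two earlier theorems; there is no genuine obstacle to overcome, which is why the statement appears as a corollary rather than a theorem in its own right.
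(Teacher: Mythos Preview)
Your proposal is correct and follows exactly the paper's own proof: invoke Theorem~\ref{pi_1} to force $\pi_1(M)=0$, note that strong dynamical convexity implies dynamical convexity, and then apply Theorem~\ref{contradiction} to reach a contradiction. The extra recollections you include are accurate but not needed for the argument.
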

\begin{proof}
    Theorem \ref{pi_1} implies $M$ is simply connected, we obtain the result, using theorem \ref{contradiction}.
\end{proof}
\begin{cor}[Theorem  \ref{contra}]\label{change}
    The unit cotangent bundle $ST^*M$ of a closed manifold $M$ cannot be dynamically convex if it is simply connected.
\end{cor}

\section{Dynamical convexity and surgery}\label{5}
The idea to obstruct dynamical convexity after surgery on Weinstein fillable contact manifolds is to observe the changes in $SH^+$ and in the homology after performing surgery.
We refer to \cite{fauck2020manifolds} for details on how attaching a handle induces maps on the canonical long exact sequence of symplectic homology. In particular, the author shows that, after handle attachment, the Viterbo transfer map \ref{vit} still respects the action filtration and therefore induces a map on the long exact sequence \ref{les}.

Note that $SH=\bigoplus_{\gamma\in \pi_1} SH_\gamma$ where $SH_\gamma$ denotes the subgroup generated by orbits in the homotopy class of $\gamma$. In this section we only consider the subgroup generated by contractible orbits and denote it by $SH$ or $SH^+$ for the sake of brevity. We consider $SH,SH^+$ and homology/cohomology groups over $\mathbb{Q}$ coefficients but omit the coefficients from the notation.

If $W^\prime$ is the domain obtained from attaching an index-$k$ handle to a Weinstein domain $W$ we have that $$H^*(W^\prime,W)\cong H^*(D^k,S^{k-1}).$$ Therefore the long exact sequence in cohomology

\[\begin{tikzcd}[cramped]
	\cdots & {H^*(W^\prime,W)} & {H^*(W^\prime)} & {H^*(W)} & {H^{*+1}(W^\prime,W)} & \cdots
	\arrow[from=1-1, to=1-2]
	\arrow[from=1-2, to=1-3]
	\arrow[from=1-3, to=1-4]
	\arrow[from=1-4, to=1-5]
	\arrow[from=1-5, to=1-6]
\end{tikzcd}\]

 implies that $H^n(W^\prime)\cong H^{n}(W)$ when $n \neq k,k-1.$ The following part of the long exact sequence 

\[\begin{tikzcd}[cramped]\label{ex}
	0 & {H^{k-1}(W^\prime)} & {H^{k-1}(W)} & {\mathbb{Z}} & {H^{k}(W^\prime)} & {H^{k}(W)} & 0
	\arrow[from=1-1, to=1-2]
	\arrow[from=1-2, to=1-3]
	\arrow[from=1-3, to=1-4]
	\arrow[from=1-4, to=1-5]
	\arrow[from=1-5, to=1-6]
	\arrow[from=1-6, to=1-7]
\end{tikzcd}\]

 implies that 
 \[
    \textrm{either}\quad\begin{cases}
        \operatorname{rk}(H^{k-1}(W) =\operatorname{rk}(H^{k-1}(W^\prime))+1\\
        \operatorname{rk}(H^{k}(W^\prime))=\operatorname{rk}(H^{k}(W))
    \end{cases}
    \qquad\textrm{or}\quad
    \begin{cases}
        \operatorname{rk}(H^{k-1}(W^\prime))= \operatorname{rk}(H^{k-1}(W)\\
        \operatorname{rk}(H^{k}(W^\prime))=\operatorname{rk}(H^{k}(W))+1
    \end{cases}
\]

\begin{rem}
    The following proofs in this section also hold for non-Weinstein exact fillable contact manifolds if the fillings  have vanishing rational first chern class.
\end{rem}

\begin{thm}[Theorem \ref{subcritical}]\label{surg}
    Let $M$ be a dynamically convex Weinstein fillable contact manifold of dimension greater than $5$. Let $M^\prime$ be a manifold that is obtained by subcritical or flexible surgery on $M$, then $M^\prime$ cannot be dynamically convex.
\end{thm}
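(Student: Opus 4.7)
The plan is to combine three ingredients: the constraint on $SH^+$ coming from dynamical convexity, the behaviour of symplectic homology under the surgery, and the cohomology long exact sequence of the pair $(W',W)$ displayed just before the theorem. Let $W$ be a Weinstein filling of $M$ on which the Liouville form restricts to a dynamically convex contact form, and let $W'$ be the filling obtained by attaching the surgery handle.

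First I would translate dynamical convexity of $M$ into vanishing of the contractible part of $SH^+_k(W)$ for $k\le n$: every contractible Reeb orbit has $\mu^\mathbb{Q}_{LCZ}\ge n+1$, so all generators sit in degree $\ge n+1$ (cf.\ Remark \ref{orb}, restricted to contractible orbits). Substituting this into Proposition \ref{les} and using Lefschetz duality on the $2n$-dimensional Weinstein domain $W$ produces the identification
\[
SH_k(W)\;\cong\;H_{k+n}(W,\partial W)\;\cong\;H^{n-k}(W)\qquad\text{for every }0\le k\le n-1.
\]
If the hypothetical $M'$ were dynamically convex, the same identification would hold for $W'$, giving $SH_k(W')\cong H^{n-k}(W')$ in the same range.

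Next I would invoke that the surgery does not affect the contractible part of $SH$ in the relevant degrees. For a subcritical handle this is Cieliebak's theorem: the Viterbo transfer map of Proposition \ref{vit} is an isomorphism $SH_\ast(W')\xrightarrow{\cong}SH_\ast(W)$. For a flexible critical handle, the same conclusion holds on the contractible part because the looseness of the attaching sphere controls the action and Conley--Zehnder indices of the newly created Reeb orbits, via the handle-attachment machinery of \cite{fauck2016handle, lazarev2020contact}. Combined with the previous step this forces $H^j(W)\cong H^j(W')$ for every $1\le j\le n$.

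Finally I would contradict the preserved cohomology using the long exact sequence of the pair $(W',W)$ recalled just before the theorem: attaching an index-$k$ handle strictly changes the rank of exactly one of $H^{k-1}$ or $H^k$ by $\pm 1$. For a subcritical handle $1\le k\le n-1$, and for a flexible one $k=n$, so the altered degree $j$ lies in $[0,n]$; connectedness of both $W$ and $W'$ rules out $j=0$, hence $j\in[1,n]$, contradicting the isomorphism above. The step I expect to be the main obstacle is the flexible case of the $SH$ invariance: this is considerably more delicate than the subcritical one and requires the index/action analysis of \cite{fauck2016handle, lazarev2020contact} to check that no newly produced contractible Reeb orbit spoils the identification $SH_k(W')\cong H^{n-k}(W')$ in the relevant range $k\le n-1$.
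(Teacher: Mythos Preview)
Your argument is correct and follows the paper's proof in all essentials: both combine (i) the vanishing of $SH^+_k$ in degrees $\le n$ forced by dynamical convexity on each side, (ii) invariance of $SH$ under subcritical/flexible handle attachment, and (iii) the rank jump in $H^{k-1}$ or $H^k$ from the cohomology exact sequence of the pair. The only organizational difference is that the paper packages (i)--(ii) into the commutative ladder of long exact sequences induced by the Viterbo transfer (naturality from \cite{fauck2020manifolds}) and applies the five-lemma directly, whereas you read off $SH_j\cong H^{n-j}$ from each row separately and compare ranks; your version thus avoids invoking naturality of the transfer with respect to the long exact sequence. Two small points: you should cite Proposition~\ref{chern} to pass from $c_1(\xi,\mathbb{Q})=0$ on the boundaries to $c_1(TW,\mathbb{Q})=c_1(TW',\mathbb{Q})=0$ (this is exactly where the dimension hypothesis enters), and your closing worry is slightly misplaced---the identification $SH_k(W')\cong H^{n-k}(W')$ comes purely from the \emph{assumed} dynamical convexity of $M'$, not from any analysis of the new orbits; the nontrivial input from the flexible case is only the isomorphism $SH_\ast(W')\cong SH_\ast(W)$, which the paper likewise treats as a black box.
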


\begin{proof}
    Assume $M^\prime$ is  dynamically convex. Let $W$ and $W^\prime$ be the Weinstein fillings of $M$ and $M^\prime$ respectively. By propositon [\ref{chern}], $c_1(M,\mathbb{Q})=0$ and $c_1(M^\prime,\mathbb{Q})=0$ implies that $c_1(W,\mathbb{Q})=0$ and $c_1(W^\prime,\mathbb{Q})=0$ respectively. Therefore, we have a rational grading for the symplectic homology groups.
    
    The maps induced by surgery on the canonical long exact sequence \cite[section 2.8]{fauck2020manifolds} are
\[\small{\begin{tikzcd}[cramped,column sep=small,row sep=scriptsize]
	\cdots & {SH^+_{n-k+2}(W^\prime)} & {H^{k-1}(W^\prime)} & {SH_{n-k+1}(W^\prime)} & {SH^+_{n-k+1}(W^\prime)} & {H^k(W^\prime)} & {SH_{n-k}(W^\prime)} & {SH^+_{n-k}(W^\prime)} \\
	\cdots & {SH^+_{n-k+2}(W)} & {H^{k-1}(W)} & {SH_{n-k+1}(W)} & {SH^+_{n-k+1}(W)} & {H^k(W)} & {SH_{n-k}(W)} & {SH^+_{n-k}(W)}
	\arrow[from=1-1, to=1-2]
	\arrow[from=1-2, to=1-3]
	\arrow[from=1-2, to=2-2]
	\arrow[from=1-3, to=1-4]
	\arrow[from=1-3, to=2-3]
	\arrow[from=1-4, to=1-5]
	\arrow[from=1-4, to=2-4]
	\arrow[from=1-5, to=1-6]
	\arrow[from=1-5, to=2-5]
	\arrow[from=1-6, to=1-7]
	\arrow[from=1-6, to=2-6]
	\arrow[from=1-7, to=1-8]
	\arrow[from=1-7, to=2-7]
	\arrow[from=1-8, to=2-8]
	\arrow[from=2-1, to=2-2]
	\arrow[from=2-2, to=2-3]
	\arrow[from=2-3, to=2-4]
	\arrow[from=2-4, to=2-5]
	\arrow[from=2-5, to=2-6]
	\arrow[from=2-6, to=2-7]
	\arrow[from=2-7, to=2-8]
\end{tikzcd}}\]
In the above diagram, the induced maps on $SH$ are isomorphisms as subcritical handle attachment preserves symplectic homology. By dynamical convexity, the positive symplectic homology groups $SH^+$ are $0$ for $W$ and $W^\prime$ in degree less than $n+1$.

Assume the index $k$ of the handle is greater than one, then the $5$ lemma implies that the homology groups at level $k$ and $k-1$ are isomorphic. This is a contradiction since attaching a handle changes the rank of one of the above groups by $1$. If the handle is of index $1$, $H^0(W)\cong H^0(W^\prime)$, since connectedness is preserved. Therefore the ranks of $H^1(W^\prime)$ and $H^1(W)$  differ by $1$ but the $5$ lemma implies they are isomorphic. Hence a contradiction.
\end{proof}
\begin{cor}
Let $M$ be a Weinstein fillable contact manifold of dimension greater than $5$. Assume $M$ is strongly dynamically convex. Let $M^\prime$ be a manifold obtained from $M$ by a subcritical or a flexible surgery, then $M^\prime$ cannot be strongly dynamically convex.
\end{cor}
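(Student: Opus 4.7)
The plan is to deduce this corollary directly from Theorem \ref{surg} (i.e.\ Theorem \ref{subcritical}) by exploiting the tautological fact that strong dynamical convexity is a strictly stronger condition than dynamical convexity. Concretely, if $(M,\xi)$ is strongly dynamically convex with contact form $\alpha$, then $\mu^{\mathbb{Q}}_{LCZ}(\gamma)\geq n+1$ for every Reeb orbit of $\alpha$, hence in particular for every \emph{contractible} Reeb orbit. Since the hypothesis on $c_1(\xi)$ in the definition of strong dynamical convexity ($c_1(\xi)$ torsion, equivalently $c_1(\xi,\mathbb{Q})=0$) already contains the hypothesis appearing in the definition of dynamical convexity, we conclude that $(M,\xi)$ is dynamically convex. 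This implication is precisely the one noted in the remark following Definition \ref{sdc}.

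With this observation in hand, the proof is a short contrapositive argument. First I would invoke the above to see that the hypothesis of the corollary implies $M$ is dynamically convex and Weinstein fillable, of dimension greater than $5$. Next, suppose for contradiction that the resulting manifold $M'$ (obtained from $M$ by subcritical or flexible surgery) is strongly dynamically convex. Then the same remark applied to $M'$ forces $M'$ to be dynamically convex. But this directly contradicts the conclusion of Theorem \ref{surg}, which asserts that no such $M'$ can be dynamically convex. Hence $M'$ cannot be strongly dynamically convex, completing the proof.

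I do not anticipate any genuine obstacle: all the real work (the five-lemma argument comparing the canonical long exact sequences of $W$ and $W'$, together with the fact that subcritical and flexible surgeries preserve symplectic homology and the vanishing of $SH^+$ in low degrees forced by dynamical convexity) was already carried out in the proof of Theorem \ref{surg}. The only content specific to the strong version is the elementary observation above, which lets us both feed strong dynamical convexity of $M$ into Theorem \ref{surg} as an input and pull back the non-dynamical-convexity of $M'$ to rule out strong dynamical convexity of $M'$. In particular, no separate treatment of non-contractible Reeb orbits or of the extra assumption $H^1(M',\mathbb{Q})=0$ built into the definition of strong dynamical convexity is needed, since those additional data are simply discarded when passing to the weaker notion invoked by Theorem \ref{surg}.
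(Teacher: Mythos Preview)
Your proposal is correct and follows essentially the same approach as the paper: the paper's proof is the one-line observation that strong dynamical convexity implies dynamical convexity, so Theorem \ref{surg} immediately yields the corollary. Your argument spells out both directions of this implication (for $M$ and for $M'$) in more detail, but the content is identical.
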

\begin{proof}
    Since strong dynamical convexity implies dynamical convexity, \ref{surg} implies the corollary.
\end{proof}

\begin{thm}[Theorem \ref{contact 0}]\label{con 0}
    Let $M^\prime $ be a manifold obtained by a contact $0-$surgery on a Weinstein -fillable contact manifold with vanishing rational first chern class. Then $M^\prime$ cannot be dynamically convex.
\end{thm}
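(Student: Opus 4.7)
The strategy is to proceed by contradiction, following the template of Theorem \ref{surg}, but with two crucial modifications: only $M$ (not $M'$) is a priori assumed to have nice contact properties in our hypotheses, and contact $0$-surgery is not a Weinstein handle attachment, so the Viterbo transfer map need not induce an isomorphism on $SH$. The engine is still the ladder comparing the canonical LES \ref{les} for a Weinstein filling of $M$ and a symplectic filling of $M'$, but what must be pinned down is the precise cohomological shift caused by the $0$-framing.

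First I would construct the relevant fillings. A Weinstein filling $W$ of $M$ exists by hypothesis, and the symplectic cobordism realising the contact $0$-surgery attaches a handle to $W$ to produce a symplectic filling $W'$ of $M'$. By Proposition \ref{chern} applied to $W$ and by naturality of the rational first Chern class under the handle attachment, we have $c_1(TW,\mathbb{Q}) = c_1(TW',\mathbb{Q}) = 0$, so the rational Conley--Zehnder gradings on $SH(W)$, $SH(W')$, $SH^+(W)$ and $SH^+(W')$ are all well-defined. One also needs to verify that $\pi_1(\partial W') \to \pi_1(W')$ behaves well enough, in the spirit of the remark following Proposition \ref{les}, so that positive symplectic homology of $W'$ is an invariant for the purposes of the dynamical convexity hypothesis on $M'$.

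Second, I would isolate the effect of the $0$-surgery on cohomology. The long exact sequence in cohomology of the pair $(W',W)$, together with the fact that $H^*(W',W)$ is concentrated in the single degree corresponding to the attached $0$-handle, produces the same dichotomy as in the short exact sequence \ref{ex} displayed just before Theorem \ref{surg}: in a specific pair of adjacent degrees $(k-1,k)$, either $\operatorname{rk} H^{k-1}(W)$ exceeds $\operatorname{rk} H^{k-1}(W')$ by one, or $\operatorname{rk} H^k(W')$ exceeds $\operatorname{rk} H^k(W)$ by one. The critical distinction from Theorem \ref{surg} is that here we \emph{cannot} invoke preservation of $SH$ under the Viterbo transfer.

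Third, I would assume that $M'$ is dynamically convex and extract from Remark \ref{orb} the vanishing $SH^+_j(W') = 0$ for all $j < n+1$. Combined with Proposition \ref{les} and Poincaré--Lefschetz duality, this identifies $SH_j(W')$ with $H^{n-j}(W')$ in the corresponding range. Chasing this identification through Fauck's ladder \cite{fauck2020manifolds} of transfer maps relating the LES for $W$ and $W'$ should then force a rank equality between $H^*(W)$ and $H^*(W')$ in exactly the degrees $(k-1,k)$ picked out by the $0$-handle, contradicting the rank jump established in the previous step.

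The main obstacle I anticipate is pinning down the ``$0$'' aspect of the surgery at the level of the filling: identifying the precise degree $k$ in which the topological change occurs (which, for the non-Weinstein $0$-framing, may differ from the critical degree $n$ that would appear for a Weinstein surgery) and verifying that the Fauck framework for the surgery-induced maps on the LES continues to apply despite the cobordism not being Weinstein. Once these technical points are secured, the contradiction follows from a diagram chase of the same flavour as in the proof of Theorem \ref{surg}.
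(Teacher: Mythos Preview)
Your proposal rests on a misreading of the term ``contact $0$-surgery.'' In this paper's conventions (see the remark in the introduction citing Weinstein, Fauck and Geiges), contact $k$-surgery is performed along an isotropic $S^k$ and corresponds to attaching a Weinstein $(k+1)$-handle to the filling. Contact $0$-surgery is therefore precisely the attachment of a subcritical $1$-handle, and $SH$ \emph{is} preserved by the Viterbo transfer map. The paper's proof uses this isomorphism explicitly. Your programme of working around a supposed failure of $SH$-invariance, and of ``pinning down the degree $k$ picked out by the $0$-framing,'' is therefore aimed at a nonexistent difficulty: the handle has index $1$, and the affected cohomology degrees are $0$ and $1$.

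The genuine obstacle you have not identified is that, unlike in Theorem~\ref{surg}, $M$ is \emph{not} assumed dynamically convex, so one cannot assume $SH^+_j(W)=0$ for $j\le n$. The paper overcomes this as follows. Since connectedness is preserved, the dichotomy on ranks forces $\operatorname{rk} H^1(W')=\operatorname{rk} H^1(W)+1$, and the inclusion-induced map $H^1(W')\to H^1(W)$ is surjective. Applying the $4$-lemma to the ladder, using that the $SH$ maps and the $H^0$ map are isomorphisms, shows that $SH^+_n(W')\to SH^+_n(W)$ is surjective; since $SH^+_n(W')=0$ by dynamical convexity of $M'$, one obtains $SH^+_n(W)=0$. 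Now the second, third and fifth vertical maps in the relevant window are isomorphisms, and a second application of the $4$-lemma forces $H^1(W')\to H^1(W)$ to be injective, contradicting the rank jump. Your outline misses this two-step $4$-lemma argument entirely.
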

\begin{proof}
    Let $W$ and $W^\prime$ be the Weinstein fillings of $M$ and $M^\prime$ respectively. Assume that $M^\prime$ is dynamically convex. We have by \ref{chern}, that the chern classes of the fillings vanish. Dynamical convexity of $M^\prime$, implies that $SH_k^+(W^\prime)=0$ for $k<n+1$. Now, observing the diagram induced by surgery on the long exact sequence,

\[\begin{tikzcd}[cramped]
	{H^0(W^\prime)} & {SH_n(W^\prime)} & {SH^+_n(W^\prime)} & {H^1(W^\prime)} & {SH_{n-1}(W^\prime)} \\
	{H^0(W)} & {SH_n(W)} & {SH^+_n(W)} & {H^1(W)} & {SH_{n-1}(W)}
	\arrow[from=1-1, to=1-2]
	\arrow[from=1-1, to=2-1]
	\arrow[from=1-2, to=1-3]
	\arrow[from=1-2, to=2-2]
	\arrow[from=1-3, to=1-4]
	\arrow[from=1-3, to=2-3]
	\arrow[from=1-4, to=1-5]
	\arrow[from=1-4, to=2-4]
	\arrow[from=1-5, to=2-5]
	\arrow[from=2-1, to=2-2]
	\arrow[from=2-2, to=2-3]
	\arrow[from=2-3, to=2-4]
	\arrow[from=2-4, to=2-5]
\end{tikzcd}\]
  we note that, since attaching a 1-handle is subcritical, the second and last arrows on symplectic homology are isomorphisms. Also note that since connectedness is preserved $H^0$ remains invariant and therefore $\operatorname{rk}(H^1(W^\prime))=\operatorname{rk}(H^1(W))+1$. The map $H^1(W^\prime)\rightarrow H^1(W)$, induced by inclusion, is surjective by the long exact sequence in cohomology in section \ref{ex}. Therefore, by the 4-lemma (\cite[Chapter 12 Lemma 3.1]{maclane2012homology}), the map $SH^+_n(W^\prime)\rightarrow SH^+_n(W)$ is a surjection. The vanishing of $SH^+_n(W^\prime)$ implies that $SH^+_n(W)$ vanishes.
  
  Since the second, third and fifth vertical maps are isomorphisms, the $4-$lemma implies that the fourth vertical map is injective which is a contradiction.
\end{proof}

\begin{cor}
     Let $M^\prime $ be a manifold obtained by performing $0-$surgery on a Weinstein -fillable contact manifold with vanishing rational chern class. Then $M^\prime$ cannot be strongly dynamically convex.
\end{cor}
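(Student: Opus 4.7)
The plan is to reduce this corollary directly to Theorem \ref{con 0} by invoking the fact that strong dynamical convexity implies ordinary dynamical convexity. Concretely, suppose for contradiction that $M^\prime$, obtained from the Weinstein-fillable contact manifold $M$ (with $c_1(\xi_M,\mathbb{Q})=0$) by a contact $0$-surgery, is strongly dynamically convex. By Definition \ref{sdc}, this means there is a contact form $\alpha^\prime$ on $M^\prime$ with $\mu^{\mathbb{Q}}_{LCZ}(\gamma)\geq n+1$ for \emph{every} Reeb orbit $\gamma$ of $\alpha^\prime$, in particular for every contractible Reeb orbit. Hence $\alpha^\prime$ is a dynamically convex contact form, and so $M^\prime$ is dynamically convex in the sense of the definition given in the introduction.

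At this point I would simply cite Theorem \ref{con 0}: the hypotheses are identical ($M$ Weinstein-fillable, $c_1$ rationally torsion, $M^\prime$ obtained by contact $0$-surgery), and the conclusion there is precisely that $M^\prime$ cannot be dynamically convex. This contradicts the previous paragraph, so $M^\prime$ cannot be strongly dynamically convex.

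There is no real obstacle here, since this is a purely formal consequence of the chain of implications (strong dynamical convexity) $\Rightarrow$ (dynamical convexity) already noted in the remark following Definition \ref{sdc} and used in the analogous corollary after Theorem \ref{surg}. The only mild point to check is that the definitional hypotheses on $M^\prime$ required for strong dynamical convexity (namely $c_1(\xi_{M^\prime})$ torsion and $H^1(M^\prime,\mathbb{Q})=0$) are compatible: the first is preserved under the surgery since $c_1(\xi_M,\mathbb{Q})=0$ and Proposition \ref{chern} applies to the new Weinstein filling $W^\prime$, and the second is automatic from the assumption of strong dynamical convexity on $M^\prime$. Thus the reduction is clean and the corollary follows.
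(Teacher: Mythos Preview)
Your proposal is correct and matches the paper's approach: the corollary is stated without proof in the paper precisely because it is the immediate consequence of Theorem~\ref{con 0} together with the implication (strong dynamical convexity) $\Rightarrow$ (dynamical convexity), exactly as you argue and as was done explicitly in the analogous corollary following Theorem~\ref{surg}.
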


\begin{thm}
    Let $M^{2n-1}$, $n\geq 2$,  be a dynamically convex manifold which admits a Liouville filling $W^{2n}$. Then either $W$ is a homology sphere  or $SH_n(W)=\mathbb{Z}$.
\end{thm}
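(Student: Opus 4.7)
The plan is to feed the vanishing $SH^+_k(W)=0$ for $k\leq n$ -- which follows from dynamical convexity applied to the contractible sector that Section \ref{3} restricts to -- into the long exact sequence \ref{les}. Since $W$ is connected, Poincar\'e--Lefschetz duality gives $H_{2n}(W,\partial W)\cong H^0(W)=\mathbb{Q}$, and the segment
\[
SH^+_{n+1}(W)\longrightarrow H_{2n}(W,\partial W)\longrightarrow SH_n(W)\longrightarrow SH^+_n(W)=0
\]
of \ref{les} exhibits $SH_n(W)$ as a quotient of $\mathbb{Q}$. Hence $SH_n(W)$ is either $\mathbb{Q}$ (which is the second alternative in the conclusion) or $0$.

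In the remaining case $SH_n(W)=0$, I aim to conclude that $W$ is a rational homology ball, i.e.\ that $\partial W = M$ is a rational homology sphere. The image of the fundamental class $[W,\partial W]\in H_{2n}(W,\partial W)$ under the map above is the unit of the unital ring $SH_\ast(W)$ \cite{seidel2008biased}. Because that map is surjective, $SH_n(W)=0$ forces the unit to vanish, and therefore the whole ring $SH_\ast(W)$ collapses to $0$.

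Once $SH_\ast(W)=0$ in every degree, the long exact sequence \ref{les} breaks into isomorphisms $SH^+_{k+1}(W)\xrightarrow{\;\cong\;}H_{k+n}(W,\partial W)$ for every $k\in\mathbb{Z}$. For $k\leq n-1$ the left-hand side vanishes by dynamical convexity, so $H_l(W,\partial W)=0$ for all $l\leq 2n-1$. Combining this with $H_{2n}(W,\partial W)=\mathbb{Q}$ and Poincar\'e--Lefschetz duality $H_l(W,\partial W)\cong H^{2n-l}(W)$ gives $H^j(W)=0$ for $j\geq 1$, making $W$ a rational homology ball; the long exact sequence of the pair $(W,\partial W)$ then forces $\partial W$ to be a rational homology sphere as claimed.

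The main obstacle is the clean invocation of the unit argument: one needs to know that the image of $[W,\partial W]$ in $SH_n(W)$ genuinely represents the unit of the pair-of-pants ring, and that its vanishing propagates through the whole algebra $SH_\ast(W)$. Both facts are standard in the TQFT setup for symplectic homology but are not developed in the present paper, so this step should be accompanied by a precise reference rather than reproved; everything else in the argument is a matter of assembling \ref{les}, Poincar\'e--Lefschetz duality, and the long exact sequence of the pair.
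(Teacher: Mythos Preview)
Your argument is correct and follows the same route as the paper's proof: use $SH^+_n(W)=0$ and the long exact sequence~\ref{les} to see that $SH_n(W)$ is a quotient of $H^0(W)\cong H_{2n}(W,\partial W)$, invoke the unit of $SH_*(W)$ to conclude that $SH_*(W)=0$ when that quotient vanishes, and then feed this back into~\ref{les} to pin down $H_*(W,\partial W)$. The only visible difference is that you work over $\mathbb{Q}$ throughout (consistent with the standing conventions of Section~\ref{5}) whereas the paper phrases the dichotomy over $\mathbb{Z}$ via a universal-coefficients step; your caveat about needing a precise reference for the unit argument is well placed, since the paper invokes the same fact (``the unit is in $SH_n(W)$'') without further justification.
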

\begin{proof}
    We examine part of the canonical long exact sequence,
    $$H^0(W)\rightarrow SH_n(W)\rightarrow SH^+_{n}(W).$$
    Since $M:=\partial W$ is dynamically convex, $SH^+_{n}(W)=0.$ This implies that the map $H^0(W)\rightarrow SH_n(W)$ is surjective. Note that if we work with coefficients over any field $k$, we have that $SH_n(W)=k $ or $0$. Using the universal coefficients theorem, we have $SH_n(W)=\mathbb{Z}$ or $0$. If $SH_n(W)=0$, then $SH(W)=0$, since the unit is in $SH_n(W)$. In fact, $SH(W)=0$ and $SH^+_k(W)=0$ for $k<n+1$ implies that $H_{q}(W,\partial W)=0$ for $q<2n$. Using the fact that W is compact with boundary $\partial W$, we have $H_{2n}(W,\partial W)=\mathbb{Z}$.
\end{proof}
\begin{cor}\label{diff}
    If $M$ is a simply connected fillable contact manifold of dimension at least $5$, then $M$ is diffeomorphic to a sphere or $SH_n(W)=\mathbb{Z}$ for any Liouville filling. 
\end{cor}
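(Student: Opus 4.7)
The plan is to extract a purely topological dichotomy from the preceding theorem and then invoke the generalized Poincar\'e conjecture. By the theorem, applied to any Liouville filling $W^{2n}$ of the (dynamically convex) contact manifold $M^{2n-1}$, we have $SH_n(W)=\mathbb{Z}$, in which case the conclusion holds, or else $SH(W)=0$. In the latter case the proof of the theorem establishes that $H_q(W,\partial W;\mathbb{Z})=0$ for $q<2n$ and $H_{2n}(W,\partial W;\mathbb{Z})=\mathbb{Z}$, so it suffices to treat this case and show $M=\partial W$ is diffeomorphic to a sphere.

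In this case $W$ is an integer homology ball. I would combine the long exact sequence of the pair $(W,\partial W)$ with Poincar\'e--Lefschetz duality $H^k(W;\mathbb{Z})\cong H_{2n-k}(W,\partial W;\mathbb{Z})$ to conclude that $H_\ast(M;\mathbb{Z})\cong H_\ast(S^{2n-1};\mathbb{Z})$, i.e.\ $M$ is an integer homology sphere.

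Since $M$ is simply connected of dimension $2n-1\geq 5$, the Hurewicz theorem (in combination with Whitehead) upgrades this to the statement that $M$ has the homotopy type of $S^{2n-1}$: $\pi_1(M)=0$, so $\pi_2(M)\cong H_2(M)=0$, and iterating shows $\pi_k(M)=0$ for $k<2n-1$ while $\pi_{2n-1}(M)\cong H_{2n-1}(M)\cong\mathbb{Z}$, yielding a degree-one map $S^{2n-1}\to M$ inducing an isomorphism on homology, hence a homotopy equivalence. Smale's proof of the generalized Poincar\'e conjecture via the $h$-cobordism theorem in dimensions $\geq 5$ then gives that $M$ is homeomorphic to $S^{2n-1}$; with the standard convention that ``diffeomorphic to a sphere'' includes exotic smooth structures on $S^{2n-1}$, this finishes the proof.

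The main obstacle is of a bookkeeping nature rather than substance: one must check that the homology conclusion of the preceding theorem really lives over $\mathbb{Z}$ rather than just $\mathbb{Q}$, so that the resulting boundary is an \emph{integer} homology sphere, not merely a rational one. The argument given there uses the unit in $SH_n$ together with the universal coefficient theorem to pin down $SH_n(W)$ integrally, so this subtlety is already handled, and no further symplectic input is needed beyond invoking the theorem. The rest is classical differential topology.
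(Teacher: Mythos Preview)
Your approach is essentially the same as the paper's: both invoke the preceding theorem to reduce to the case where the filling $W$ is a homology ball, and then finish with the $h$-cobordism theorem. The only difference is packaging---the paper cites \cite{kwon2024dynamically} as a black box and says ``this follows from the $h$-cobordism theorem,'' whereas you spell out the intermediate topology (integer homology sphere $\Rightarrow$ homotopy sphere via Hurewicz and Whitehead $\Rightarrow$ sphere via the generalized Poincar\'e conjecture). Your version is more self-contained.

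One small point: the route you take through the generalized Poincar\'e conjecture yields only a \emph{homeomorphism} to $S^{2n-1}$, and your closing remark about exotic spheres is a dodge rather than a proof. The cleaner way to obtain an honest diffeomorphism---and presumably what the paper means by ``follows from the $h$-cobordism theorem''---is to use the filling directly: excise a small standard ball from $W$ to obtain a cobordism between $S^{2n-1}$ and $M$. Since $W$ is a simply connected homology ball and $M$ is simply connected, this is a smooth $h$-cobordism in dimension $2n\geq 6$, and Smale's theorem gives $M\cong S^{2n-1}$ diffeomorphically. (This does require $\pi_1(W)=0$, a point neither proof makes explicit; it holds automatically for Weinstein fillings by a handle co-index argument.)
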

\begin{proof}
    If $M$ is simply connected, dynamically convex and of dimension at least $5$, then by \cite{kwon2024dynamically} $M$ is diffeomorphic to a sphere if the filling has vanishing $SH$. This  follows from the h-cobordism theorem. Note that vanishing of $SH^+$ is independent of the filling by \ref{rem}.
\end{proof}
\begin{rem}
    Note that a simply connected dynamically convex contact manifold is strongly dynamically convex, so all such manifolds that are non-diffeomorphic to spheres should have $SH_n(W)=\mathbb{Z}$ for any filling if such manifolds exist.
\end{rem}
    
\begin{cor}[Theorem \ref{SH}]\label{SH_n}
    Let $ \Lambda$ be the class of manifolds that admit fillings with non-vanishing symplectic homology, then for $Y_1,\ldots Y_k\in\Lambda$, $k>1$, the connected sum $\#_kY_i$ and the manifolds obtained by subcritical/flexible surgeries on this class of connected sums cannot be dynamically convex. 
\end{cor}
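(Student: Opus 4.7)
The plan is to contradict the dichotomy in the preceding theorem, which says that if $M$ is dynamically convex with connected Weinstein filling $W$ then $\dim_{\mathbb{Q}} SH_n(W)\leq 1$: dynamical convexity forces $SH^+_n(W)=0$, and the long exact sequence \ref{les} then yields a surjection $H^0(W)=\mathbb{Q}\twoheadrightarrow SH_n(W)$. My goal is to show that the fillings that arise in the statement have $\dim SH_n\geq k\geq 2$.

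First, for each $Y_i\in\Lambda$ I would pick a Weinstein filling $W_i$ with $SH(W_i)\neq 0$ and form the boundary connect sum $W_0:=\natural_{i=1}^k W_i$, which is a Weinstein filling of $\#_k Y_i$. The boundary connect sum is obtained by attaching subcritical $1$-handles to the disjoint union $W_1\sqcup\cdots\sqcup W_k$, and since subcritical handle attachment preserves symplectic homology,
\[
SH(W_0)\;\cong\;SH(W_1\sqcup\cdots\sqcup W_k)\;\cong\;\bigoplus_{i=1}^k SH(W_i).
\]
Each $SH(W_i)$ is a non-zero unital $\mathbb{Q}$-algebra, so its unit $1_i\in SH_n(W_i)$ is non-trivial. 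These $k$ units sit in distinct summands of $SH_n(W_0)$, giving $\dim_{\mathbb{Q}} SH_n(W_0)\geq k$. Combined with the dichotomy, this already proves the corollary for $M=\#_k Y_i$.

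For $M'$ obtained by subcritical or flexible surgery on $\#_k Y_i$, I would take the Weinstein filling $W'$ of $M'$ built from $W_0$ by attaching the surgery handles. In the subcritical case, $SH$ is preserved and so $\dim SH_n(W')\geq k$, contradicting the dichotomy. The flexible case is the main obstacle, since flexible handles can in principle kill $SH$ (flexible Weinstein domains have $SH=0$), and the $k-1$ ``extra'' units may a priori fail to survive in $SH_n(W')$. I would handle this by following the argument in the proof of Theorem~\ref{surg}: Fauck's naturality of the long exact sequence \ref{les} under handle attachment, together with the vanishing $SH^+_*(W')=0$ in low degrees imposed by the putative dynamical convexity of $M'$, forces enough rigidity to preserve the dimension bound $\dim SH_n(W')\geq k$, producing the required contradiction.
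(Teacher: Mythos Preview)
Your argument for the connected sum $\#_k Y_i$ is exactly the paper's: form $W_0=\natural_i W_i$, use $SH(W_0)\cong\bigoplus_i SH(W_i)$, and note that the $k$ units give $\dim_{\mathbb{Q}} SH_n(W_0)\geq k\geq 2$, which violates the dichotomy.

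The surgery case, however, rests on a misconception. Flexible (critical, attached along loose Legendrians) handle attachment \emph{does} induce an isomorphism on $SH$: this is a consequence of the Bourgeois--Ekholm--Eliashberg surgery exact triangle together with the vanishing of the Legendrian invariants of loose Legendrians. The fact that flexible Weinstein domains have $SH=0$ is not because flexible handles ``kill'' $SH$; it is because such domains are built from the ball, which already has $SH=0$, by $SH$-preserving handle attachments. The paper simply invokes this invariance: for $W'$ obtained from $W_0$ by subcritical or flexible handles one has $SH_n(W')\cong SH_n(W_0)$, hence $\dim_{\mathbb{Q}} SH_n(W')\geq k$, and the dichotomy again gives the contradiction.

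Your proposed fallback via the method of Theorem~\ref{surg} does not work as stated. That argument needs $SH^+_*=0$ in low degrees for \emph{both} fillings in order to run the five lemma, whereas here you only assume dynamical convexity of $M'$. Worse, the putative dynamical convexity of $M'$ together with the long exact sequence~\ref{les} yields a surjection $\mathbb{Q}=H^0(W')\twoheadrightarrow SH_n(W')$, so $\dim_{\mathbb{Q}} SH_n(W')\leq 1$; without $SH$-invariance under flexible handles you have no lower bound on $\dim_{\mathbb{Q}} SH_n(W')$ to contradict this.
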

\begin{proof}
    Let $K:= \#_kY_i$. For a filling $W$ of $K$,  $SH(W_1\#W_2)=SH(W_1)\oplus SH(W_2)$. This implies that $\operatorname{rk}(SH_n(K))>1$. By \ref{diff}, $K$ is not dynamically convex. Let $K^\prime$ be the manifold obtained by subcritical surgery on $K$. Then since subcritical/flexible surgery induces a isomorphism on symplectic homology groups, \ref{diff} implies that $K^\prime$ cannot be dynamically convex. 
\end{proof}

\begin{thm}[Theorem \ref{hom}]\label{hom1}
    Let $W^{2n}$, $n\geq 3$, be a Weinstein domain which is a rational homology ball, that is $H_k(W,\partial W)=\mathbb{Q}$ for $k=2n$ and $0$ otherwise. Then attaching a flexible/subcritical handle produces a manifold $W^\prime$ such that $\partial W^\prime$ cannot be dynamically convex.
\end{thm}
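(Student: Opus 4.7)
The plan is to argue by contradiction, producing a nontrivial class in $SH_{n-k}(W')$ that cannot survive the Viterbo transfer to $SH_{n-k}(W)=0$. Assume $\partial W'$ is dynamically convex, and let $k$ denote the index of the attached handle, so $1 \leq k \leq n$.

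First I compute the rational cohomology of $W'$. Since $W$ is a rational homology ball, $H^*(W;\mathbb{Q})$ is concentrated in degree zero. Attaching a $k$-handle $h^k$ and using the cohomology long exact sequence of the pair $(W',W)$ with $H^*(W',W;\mathbb{Q}) = H^*(D^k,S^{k-1};\mathbb{Q})$, I find that $H^j(W';\mathbb{Q})$ equals $\mathbb{Q}$ exactly for $j \in \{0, k\}$ and vanishes otherwise. By Lefschetz duality, $H_j(W',\partial W';\mathbb{Q})$ is supported in degrees $2n$ and $2n-k$ (each a copy of $\mathbb{Q}$), whereas $H_j(W,\partial W;\mathbb{Q})$ is supported only in degree $2n$.

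Next I extract $SH_{n-k}(W') \cong \mathbb{Q}$ from the canonical long exact sequence \ref{les} at $*=n-k$. Dynamical convexity of $\partial W'$ gives $SH^+_j(W') = 0$ for all $j \leq n$ by Remark \ref{orb}, so in particular $SH^+_{n-k+1}(W') = SH^+_{n-k}(W') = 0$. Combined with $H_{2n-k+1}(W',\partial W') = 0$ (valid for $k\geq 2$; for $k=1$ one uses $SH^+_n(W')=0$ in its place), the long exact sequence collapses to an isomorphism $H_{2n-k}(W',\partial W') = \mathbb{Q} \xrightarrow{\cong} SH_{n-k}(W')$.

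Finally I invoke the Viterbo transfer $T : SH(W') \to SH(W)$, which is an isomorphism on $SH$ by subcriticality or flexibility of the handle, and commutes with the canonical long exact sequence (as discussed at the start of Section \ref{5} and used in Theorem \ref{surg}). At $*=n-k$ this yields a commutative square whose top edge is the isomorphism $\mathbb{Q} \cong SH_{n-k}(W')$ from above, whose right edge is the isomorphism $T$, and whose left edge is the map $H_{2n-k}(W',\partial W') = \mathbb{Q} \to H_{2n-k}(W,\partial W) = 0$, which is forced to be zero because $W$ is a rational homology ball and $k \geq 1$. The top-then-right path is an isomorphism $\mathbb{Q}\to SH_{n-k}(W)$, while the left-then-bottom path is zero, contradicting commutativity.

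The main obstacle is the invocation that $T$ is an isomorphism on $SH$. For subcritical handles this is Cieliebak's theorem, already used in Theorem \ref{surg}; for flexible critical handles one has to invoke the analogous invariance in the flexible Weinstein setting. A minor technical point is the $k=1$ case, where $H_{2n-k+1}(W',\partial W') = H_{2n}(W',\partial W') = \mathbb{Q}$ does not vanish, but this is handled by substituting $SH^+_n(W') = 0$ from dynamical convexity to obtain the same isomorphism in Step 2.
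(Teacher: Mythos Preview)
Your argument is correct and substantially cleaner than the paper's. Both proofs use the commuting ladder between the canonical long exact sequences of $W'$ and $W$ induced by the Viterbo transfer, together with the fact that dynamical convexity kills $SH^+_q$ for $q\leq n$. The paper, however, proceeds indirectly: it first uses the $4$-lemma to show that all maps $SH^+_q(W')\to SH^+_q(W)$ are surjective, deduces $SH^+_q(W)=0$ for $q\leq n$, then runs the long exact sequence for $W$ to conclude $SH_q(W)=0$ for $q<n$, transfers this to $W'$, and finally re-enters the long exact sequence for $W'$ to force $H^k(W')=0$, contradicting the handle computation. Your single commutative square at degree $n-k$, with one side an isomorphism $\mathbb{Q}\to SH_{n-k}(W)$ and the other factoring through $H_{2n-k}(W,\partial W)=0$, accomplishes the same contradiction in one step.

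Two small points of cleanup. First, the parenthetical about $H_{2n-k+1}(W',\partial W')$ is unnecessary: injectivity of $H_{2n-k}(W',\partial W')\to SH_{n-k}(W')$ follows directly from $SH^+_{n-k+1}(W')=0$, which holds for all $k\geq 1$ by dynamical convexity (at $k=1$ this is exactly $SH^+_n(W')=0$, as you note). Second, you should record that the rational gradings are well-defined: $c_1(W;\mathbb{Q})=0$ since $H^2(W;\mathbb{Q})=0$, and $c_1(W';\mathbb{Q})=0$ follows from the assumed dynamical convexity of $\partial W'$ via Proposition~\ref{chern}. Finally, Remark~\ref{orb} as stated concerns \emph{strong} dynamical convexity; the vanishing $SH^+_q(W')=0$ for $q\leq n$ under ordinary dynamical convexity holds here because Section~\ref{5} restricts throughout to the contractible component of $SH^+$.
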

\begin{proof}
Assume $\partial W^\prime$ is  dynamically convex. Since $W$ is a rational homology ball, $H^2(W)=0$ by Poincare-Lefshetz duality which implies $c_1(\partial W,\mathbb{Q})=0.$ Therefore by \ref{chern}, the Chern classes vanish. Let $k$ be the index of the handle. We examine the maps induced by surgery on the long exact sequence,

\[\begin{tikzcd}[cramped,column sep=small,row sep=scriptsize]
	 & {H^{k-1}(W^\prime)} & {SH_{n-k+1}(W^\prime)} & {SH^+_{n-k+1}(W^\prime)} & {H^k(W^\prime)} & {SH_{n-k}(W^\prime)}  \\
	 & {H^{k-1}(W)} & {SH_{n-k+1}(W)} & {SH^+_{n-k+1}(W)} & {H^k(W)} & {SH_{n-k}(W)} 
	\arrow[from=1-2, to=1-3]
	\arrow[from=1-2, to=2-2]
	\arrow[from=1-3, to=1-4]
	\arrow[from=1-3, to=2-3]
	\arrow[from=1-4, to=1-5]
	\arrow[from=1-4, to=2-4]
	\arrow[from=1-5, to=1-6]
	\arrow[from=1-5, to=2-5]
	\arrow[from=1-6, to=2-6]
   \arrow[from=2-2, to=2-3]
	\arrow[from=2-3, to=2-4]
	\arrow[from=2-4, to=2-5]
	\arrow[from=2-5, to=2-6]
\end{tikzcd}\]
Since the index $k$ of the handle is less than $n+1$ and since subcritical/flexible surgery preserves $SH$, we have that the second and the fifth vertical  maps are isomorphisms. As before, we have
\[
\textrm{either,}\quad
\begin{cases}\operatorname{rk}(H^{k-1}(W)) =\operatorname{rk}(H^{k-1}(W^\prime))+1 \textrm{ and}\\\operatorname{rk}(H^{k}(W^\prime))=\operatorname{rk}(H^{k}(W))\end{cases}
\qquad\textrm{or,}\quad
\begin{cases}
    \operatorname{rk}(H^{k-1}(W^\prime))= \operatorname{rk}(H^{k-1}(W) \textrm{ and}\\\operatorname{rk}(H^{k}(W^\prime))=\operatorname{rk}(H^{k}(W))+1
\end{cases}.
\]
Since $W$ is a rational homology ball, we cannot have $\operatorname{rk}(H^{k-1}(W)) =\operatorname{rk}(H^{k-1}(W^\prime))+1$. Therefore $\operatorname{rk}(H^{k-1}(W^\prime))= \operatorname{rk}(H^{k-1}(W)$ and $\operatorname{rk}(H^{k}(W^\prime))=\operatorname{rk}(H^{k}(W))+1$.

Since the map $H^k(W,\mathbb{Q})\rightarrow H^k(W,\mathbb{Q}) $ is induced by inclusion, it is surjective. Therefore using the $4-$lemma, we get that the map $SH^+_{n-k+1}(W^\prime)\rightarrow SH^+_{n-k+1}(W)$ is surjective. Since the  maps 
$SH^+_{n-p+1}(W^\prime)\rightarrow SH^+_{n-p+1}(W)$  are epimorphisms, using the 4-lemma on the surgery diagram, we obtain that $$\operatorname{rk}(SH^+_q(W^\prime))\geq \operatorname{rk}(SH^+_q(W))$$ for all $q\in \mathbb{Z}$; in particular for $q\leq n$. If $(SH^+_q(W))\neq 0$ for some $q\leq n$, then $SH^+_q(W^\prime)\neq 0$ and therefore $M^\prime$ cannot be dynamically convex. Therefore we must have $SH^+_q(W)=SH^+_q(W^\prime)=0$ for $q\leq n$. The long exact sequence \ref{les} for $W$  implies $SH_q(W)\cong H^{n-q}(W)=0$ for $q< n.$ Since subcritical/flexible surgery doesn't change $SH$, we have $SH_q(W^\prime)= SH_q(W)=0$ for $q<n$. The long exact sequence \ref{les} for $W^\prime$ thus implies $H^i(W^\prime)=0 $ for $i\neq 0$ and $H^0(W^\prime)=0.$ This is a contradiction since $H^k(W^\prime)\neq 0.$
\end{proof}
\begin{cor}
    Let $M^\prime$ be a contact manifold obtained by subcritical/flexible surgery on a contact manifold $M$ admitting a Weinstein filling which is a rational homology ball. Then $M^\prime$ cannot be strongly dynamically convex.
\end{cor}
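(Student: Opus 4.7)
The plan is to reduce the corollary to Theorem \ref{hom1} by invoking the fact that strong dynamical convexity implies (ordinary) dynamical convexity, a point already noted in the remark following Definition \ref{sdc}.

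First I would set up the contradiction: assume $M'$ is strongly dynamically convex. By definition, this means there is a contact form $\alpha'$ on $M'$ with $\mu^{\mathbb{Q}}_{LCZ}(\gamma)\geq n+1$ for every Reeb orbit $\gamma$ of $\alpha'$, not just the contractible ones. In particular, the inequality holds for all contractible Reeb orbits, so $\alpha'$ is a dynamically convex contact form on $M'$, and hence $M'$ is dynamically convex in the sense of the earlier definition. Note that for strong dynamical convexity to even be defined we need $c_1(\xi',\mathbb{Q})=0$ and $H^1(M',\mathbb{Q})=0$, so the hypotheses required to make sense of the rational Conley--Zehnder grading on the filling $W'$ are automatic.

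Next I would invoke Theorem \ref{hom1} directly. The hypothesis of the corollary gives a Weinstein filling $W$ of $M$ which is a rational homology ball, and $W'$ is the Weinstein domain obtained from $W$ by attaching the flexible or subcritical handle corresponding to the given surgery, so $\partial W'=M'$. Theorem \ref{hom1} then asserts that $M'=\partial W'$ cannot be dynamically convex, contradicting the conclusion of the previous paragraph.

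The only minor subtlety, and the step worth stating carefully, is the implication ``strong dynamical convexity $\Rightarrow$ dynamical convexity''; this is immediate from comparing Definitions \ref{sdc} and the earlier one, since the set of contractible Reeb orbits is a subset of the set of all Reeb orbits and the bound $\mu^{\mathbb{Q}}_{LCZ}(\gamma)\geq n+1$ on the latter trivially restricts to a bound on the former. No new input from symplectic homology, surgery exact sequences or the $4$-lemma is needed here; all of the substantive work has been carried out in Theorem \ref{hom1}. Hence the proof is a one-line deduction, and there is no real obstacle beyond ensuring the hypotheses of Theorem \ref{hom1} (in particular the dimension hypothesis $n\geq 3$ implicit in the corollary) are satisfied.
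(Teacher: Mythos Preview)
Your proposal is correct and matches the paper's approach exactly: the paper states this corollary without proof immediately after Theorem~\ref{hom1}, relying on the (already noted) implication that strong dynamical convexity implies dynamical convexity, just as in the analogous corollary following Theorem~\ref{surg}. Your observation that the dimension hypothesis $n\geq 3$ from Theorem~\ref{hom1} should be inherited by the corollary is a valid point the paper leaves implicit.
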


\begin{thm}[Theorem \ref{cotangent connect}] \label{cot con}
    The connected sum of any Weinstein fillable contact manifold with any unit cotangent bundle $DT^*N$ of a simply connected manifold $N$ cannot be dynamically convex.
\end{thm}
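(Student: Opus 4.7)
The plan is to reduce to Theorem \ref{contra} by showing that if $M\#ST^*N$ were dynamically convex, the disc cotangent bundle $DT^*N$ would satisfy $SH^+_k(DT^*N;\mathbb{Z}_2)=0$ for $k\leq n$, which is exactly the vanishing driving the contradiction in the proof of Theorem \ref{contradiction}. Assume $M':=M\#ST^*N$ admits a dynamically convex contact form, let $W$ be a Weinstein filling of $M$, and form $W':=W\natural DT^*N$, a Weinstein filling of $M'$ obtained by attaching a subcritical $1$-handle to the disjoint union $W\sqcup DT^*N$. Since $N$ is simply connected, $\pi_1(DT^*N)=0$ and every Reeb orbit lying in the subdomain $DT^*N\subset W'$ is contractible in $W'$; moreover the cohomology splits as $H^k(W';\mathbb{Z}_2)\cong H^k(W;\mathbb{Z}_2)\oplus H^k(DT^*N;\mathbb{Z}_2)$ for $k\geq 1$, while $H^0(W';\mathbb{Z}_2)=\mathbb{Z}_2$. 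The subcritical $1$-handle yields a Viterbo-transfer isomorphism $SH_*(W')\xrightarrow{\sim} SH_*(W)\oplus SH_*(DT^*N)$, and by \cite{fauck2020manifolds} this transfer extends to a morphism between the long exact sequence \ref{les} for $W'$ and the corresponding LES for $W\sqcup DT^*N$.

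The central step is to deduce $SH^+_k(DT^*N;\mathbb{Z}_2)=0$ for $k\leq n$ from $SH^+_k(W')_0=0$ for $k\leq n$. For $k\leq n-1$, both cohomology maps (in degrees $n-k,n-k+1\geq 1$) and both $SH$-maps in the LES diagram are isomorphisms, so the 5-lemma forces the transfer $SH^+_k(W')_0\to SH^+_k(W)_0\oplus SH^+_k(DT^*N)$ to be an isomorphism; with the source zero, both summands vanish. For $k=n$ the diagonal embedding $H^0(W')\hookrightarrow H^0(W\sqcup DT^*N)$ prevents a direct 5-lemma application, but the individual restriction $H^0(W')\to H^0(DT^*N)$ is an isomorphism: the vanishing of $SH^+_n(W')_0$ gives $H^0(W')\twoheadrightarrow SH_n(W')_0$, and composing with the Viterbo projection onto $SH_n(DT^*N)$ while invoking commutativity shows that $H^0(DT^*N)\to SH_n(DT^*N)$ is surjective; the LES of $DT^*N$ together with $H^1(DT^*N)=0$ (simple connectivity) then yields $SH^+_n(DT^*N)=0$.

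With $SH^+_k(DT^*N;\mathbb{Z}_2)=0$ for $k\leq n$, Corollary \ref{pvit} gives $H_k(\mathcal{L}N,N;\mathbb{Z}_2)=0$ in that range, so the inclusion of constant loops induces an isomorphism $\iota_*:H_k(N;\mathbb{Z}_2)\to H_k(\mathcal{L}N;\mathbb{Z}_2)$ for $k\leq n-1$. Proposition \ref{prop:HkM} then forces $H_k(N;\mathbb{Z}_2)=0$ for $1\leq k\leq n-1$ and $H_{n-1}(\Omega N;\mathbb{Z}_2)=\mathbb{Z}_2$, and the Serre-exact-sequence argument of Theorem \ref{contradiction} applied to the loop-loop fibration $\Omega N\to\mathcal{L}N\to N$ in degree $n$ produces the desired contradiction. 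The main obstacle lies in the case $k=n$: since the restriction on $H^0$ is not an isomorphism the 5-lemma does not apply, and one must extract the conclusion by tracing the explicit unit map $H^0\to SH_n$ through the commutative diagram.
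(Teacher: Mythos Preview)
Your argument is correct and uses the same core ingredients as the paper's proof: the Viterbo transfer for the subcritical $1$-handle, the commutative ladder of long exact sequences, and the cotangent-bundle contradiction of Section~\ref{cot}. The organization, however, is somewhat cleaner than the paper's. The paper proceeds by a case split: either $SH^+_k$ of the disjoint union vanishes for all $k\leq n$ (in which case the $DT^*N$ summand vanishes and Section~\ref{cot} gives the contradiction), or some $SH^+_k$ is nonzero, and then one pushes this non-vanishing forward to $W'$ via the $5$-lemma (for $k<n$) or the $4$-lemma (for $k=n$). You instead argue directly that $SH^+_k(DT^*N;\mathbb{Z}_2)=0$ for every $k\leq n$, avoiding the dichotomy entirely. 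Your treatment of the delicate degree $k=n$ is also different: rather than invoking the $4$-lemma, you trace the unit map $H^0\to SH_n$ through the commuting square and use $H^1(DT^*N)=0$; this is more explicit and isolates exactly why the simple connectivity of $N$ matters at that step. Finally, your consistent use of $\mathbb{Z}_2$ coefficients throughout is tidier than the paper's implicit passage between $\mathbb{Q}$ and $\mathbb{Z}_2$, since the vanishing of $SH^+_k$ in low degrees is a chain-level statement (no generators) and hence holds over any coefficient ring.
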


\begin{proof}
Let $M$ denote the disjoint union of the unit cotangent bundle and the Weinstein fillable manifold and  $M^\prime$ denote their connected sum. Let $W$ be the filling of $M$ and $W^\prime$ the Weinstein filling of $M^\prime$. Assume that $M^\prime$ is  dynamically convex. Using proposition \ref{chern}, the rational first Chern classes of the fillings vanish.

We  consider the process of connected sum as attaching a one handle to two components of $M$, thus decreasing the rank of $H^0$ by 1 and preserving the ranks of all the higher cohomology groups. $SH$ remains invariant as we are performing subcritical surgery. 

Now, if $SH^+_k(W)=0 $ for $k<n+1$, then $SH^+_k(DT^*N)=0$ for $k<n+1$. The proof of lemma \ref{isomorphism} holds and this implies $H_k(N,\mathbb{Z}/2\mathbb{Z})\hookrightarrow H_k(\mathcal{L}N,\mathbb{Z}/2\mathbb{Z})$ under inclusion is an isomorphism for $k< n+1$. Since $N$ is simply connected the argument following Lemma \ref{isomorphism} in section \ref{cot} using the Serre spectral sequence would give a contradiction. Therefore there exists $k< n+1$ such that $SH^+_k(W)=0 $.

Therefore by analyzing the same induced sequence by surgery as above and using the five lemma we get that if $SH^+_k(W)\neq 0$ for $k<n$, then $SH^+_k(W^\prime)\neq 0$ and therefore the connected sum is not  dynamically convex. When $SH^+_n(W)\neq0$, we consider a part of the diagram of exact sequences

\[\begin{tikzcd}[cramped]
	{SH_n(W^\prime)} & {SH^+_n(W^\prime)} & {H^1(W^\prime)} & {SH_{n-1}(W^\prime)} \\
	{SH_n(W)} & {SH^+_n(W)} & {H^1(W)} & {SH_{n-1}(W)}
	\arrow[from=1-1, to=1-2]
	\arrow[two heads, from=1-1, to=2-1]
	\arrow[from=1-2, to=1-3]
	\arrow[from=1-2, to=2-2]
	\arrow[from=1-3, to=1-4]
	\arrow[two heads, from=1-3, to=2-3]
	\arrow[two heads, from=1-4, to=2-4]
	\arrow[from=2-1, to=2-2]
	\arrow[from=2-2, to=2-3]
	\arrow[from=2-3, to=2-4]
\end{tikzcd}\]
The first, third and fourth vertical maps are isomorphisms, therefore the $4$-lemma implies that the second map is surjective and therefore $SH^+_n(W^\prime)\neq 0$. Thus, $W^\prime$ cannot be strongly dynamically convex. 
\end{proof}
\begin{cor}
    The connected sum of a Weinstein fillable contact manifold with the unit cotangent bundle of a simply connected manifold cannot be strongly dynamically convex.
\end{cor}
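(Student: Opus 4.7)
The plan is to reduce this corollary directly to Theorem \ref{cot con}, which has just been established. First I would unpack Definition \ref{sdc}: every strongly dynamically convex contact manifold $(M,\xi)$ is by definition equipped with the ambient hypotheses $c_1(\xi)$ torsion (hence $c_1(\xi,\mathbb{Q})=0$) and $H^1(M,\mathbb{Q})=0$, and it admits a contact form $\alpha$ for which $\mu^{\mathbb{Q}}_{LCZ}(\gamma)\geq n+1$ on \emph{every} Reeb orbit $\gamma$. In particular the inequality holds on every contractible Reeb orbit, so $\alpha$ is a dynamically convex contact form in the sense of the definition preceding Theorem \ref{contra}. Thus strong dynamical convexity implies dynamical convexity.

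Next I would argue by contradiction. Suppose $M \# DT^*N$ admits a strongly dynamically convex contact form, where $M$ is Weinstein fillable and $N$ is simply connected. By the previous paragraph this form is dynamically convex. But Theorem \ref{cot con} (equivalently Theorem \ref{cotangent connect}) states exactly that no such connected sum can support a dynamically convex contact form. This is a contradiction, completing the proof.

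No genuinely hard step appears here; all of the substantive work—the Viterbo and positive Viterbo isomorphisms, the Serre spectral sequence analysis carried out in Section \ref{cot}, and the surgery long exact sequence together with the four-lemma chase used in Theorem \ref{cot con}—has already been absorbed into the statement of Theorem \ref{cot con}. The only point worth flagging in a careful write-up is that the topological hypotheses bundled into Definition \ref{sdc} are precisely what ensures that the notion of dynamical convexity even applies to the form in question, so that the implication ``strongly dynamically convex $\Rightarrow$ dynamically convex'' is formal rather than conditional.
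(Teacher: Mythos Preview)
Your proposal is correct and follows exactly the approach the paper (implicitly) takes: the corollary is immediate from Theorem~\ref{cot con} together with the observation that strong dynamical convexity implies dynamical convexity. The paper offers no separate proof for this corollary, so your reduction is precisely what is intended.
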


\bibliographystyle{alpha}
\bibliography{ref_dynamicallyconvex}

\end{document}